\documentclass[12pt]{article}

\usepackage{amsfonts,mathrsfs}
\usepackage{amssymb,amsmath,amsbsy,amsthm}
\usepackage{dsfont}
\usepackage{blkarray}
\usepackage{tikz}
\usepackage{tkz-euclide}
\usepackage{tikz-cd}
\usepackage{enumerate}
\usetikzlibrary{patterns}
\usepackage{ae}
\usepackage{bm}
\usepackage{graphicx}
\usepackage{float}
\usepackage{cite}
\usepackage[d]{esvect}   
\usepackage{mathtools} 
\usepackage{indentfirst}
\usepackage{lineno}
\usepackage{titlesec}
\usepackage{enumitem}
\usepackage{color}
\usepackage{aliascnt}
\usepackage{varioref}
\usepackage{hyperref}
\hypersetup{colorlinks=true,linkcolor=cyan,anchorcolor=cyan,citecolor=red,CJKbookmarks=True,
	,urlcolor=cyan}
\usepackage{cleveref}

\numberwithin{equation}{section}

\def\And{\mbox{\rm ~and~}}

\def\For{\mbox{\rm ~for~}}

\def\I{\mbox{\rm (\hspace{0.2mm}I\hspace{0.2mm})}\,}

\DeclareFontEncoding{LS1}{}{}
\DeclareFontSubstitution{LS1}{stix}{m}{n}
\DeclareSymbolFont{stixletters}{LS1}{stix}{m}{it}
\DeclareMathAccent{\cev}{\mathord}{stixletters}{"91}
\DeclareMathAccent{\vec}{\mathord}{stixletters}{"92}
\DeclareMathAccent{\vecev}{\mathord}{stixletters}{"95}

\def\({\mbox{\rm (}}\def\){\mbox{\rm )}}

\makeatletter

\newcommand{\Rmnum}[1]{\expandafter\@slowromancap\romannumeral #1@}
\makeatother
\newtheorem{theorem}{Theorem}[section]
\newaliascnt{lemma}{theorem}
\newtheorem{lemma}[lemma]{Lemma}
\aliascntresetthe{lemma}

\newaliascnt{proposition}{theorem}
\newtheorem{proposition}[proposition]{Proposition}
\aliascntresetthe{proposition}

\newaliascnt{fact}{theorem}

\aliascntresetthe{fact}

\newaliascnt{definition}{theorem}

\aliascntresetthe{definition}

\newaliascnt{conjecture}{theorem}

\aliascntresetthe{conjecture}

\newaliascnt{corollary}{theorem}
\newtheorem{corollary}[corollary]{Corollary}
\aliascntresetthe{corollary}

\newaliascnt{claim}{theorem}

\aliascntresetthe{claim}

\newaliascnt{problem}{theorem}

\aliascntresetthe{problem}

\newaliascnt{question}{theorem}

\aliascntresetthe{question}

\newaliascnt{remark}{theorem}

\aliascntresetthe{remark}

\newaliascnt{example}{theorem}
\newtheorem{example}[example]{Example}
\aliascntresetthe{example}

\newaliascnt{notation}{theorem}

\aliascntresetthe{notation}

\begin{document}
	
\begin{center}
{\Large\bf Partition-selected flow polynomials and associated arrangements}\\ [7pt]
	\end{center}
	\vskip 3mm
\begin{center}
Beifang Chen$^{1}$ \quad Ying Cao$^{2}$\quad Houshan Fu$^{3}$ \quad{\rm and}\quad Hongyang Wang$^4$\\[8pt]
$^{1,2,4}$Department of Mathematics, Hong Kong University of Science and Technology\\
Clear Water Bay, Hong Kong, P. R. China\\[12pt]
$^{3}$School of Mathematics and Information Science, Guangzhou University\\
Guangzhou 510006, Guangdong, P. R. China\\[15pt]
E-mails: $^{1}$mabfchen@ust.hk, $^{2}$ycaobf@connect.ust.hk, $^{3}$fuhoushan@gzhu.edu.cn, $^4$hwanghj@connect.ust.hk\\[15pt]
\end{center}
	
\begin{abstract}
We introduce a partition-selection method to generalize the flow, chromatic, and Tutte polynomials of a graph by restricting the standard edge subgraph expansions to subgraphs given by prescribed connected vertex partitions. We establish similar deletion-contraction formulas and specialization relations for these polynomials, recovering all classical polynomial invariants when the selection is the set of all partitions.

Next we study a relation between  Jaeger et al.'s nonhomogeneous flows and a special class of partition-selected flow polynomials (called affine flow polynomials). Specifically, we give a geometric realization of nowhere-zero nonhomogeneous flows by restricting the edge-coordinate arrangement to affine flow spaces. The resulting characteristic polynomials coincide with Kochol's admissible assigning polynomials and with affine flow polynomials, which enumerate nowhere-zero nonhomogeneous flows over finite fields.

To see the key role of the partition-selection framework, we further introduce boundary arrangements determined by the bond structure of a graph. Using the intersection posets of boundary arrangements, we obtain the classification of all restricted arrangements mentioned above, the comparison of unsigned coefficients of affine flow polynomials, and the decomposition formulas for affine flow polynomials.
\vspace{1ex}\\
\noindent{\bf Keywords:} flow polynomial, nonhomogeneous flow, graph polynomial, hyperplane arrangement, boundary arrangement
 \vspace{1ex}\\
{\bf Mathematics Subject Classifications:} 05C21, 05C31, 52C35
\end{abstract}
\section{Introduction}\label{Sec-1}
Throughout this paper, let $G =\big (V(G), E(G)\big)$ be a finite graph,  possibly with loops and multiple edges. We denote by $r$ and $n$ the {\em rank function } and  the {\em nullity function} of  $G$ respectively, defined by: 
\[
r(S):=|V(G)|-c(S)\;\quad{\rm and}\;\quad n(S):=|S|-r(S) \;\For\; S\subseteq E(G),
\]
where $c(S)$ is the number of  components of the spanning subgraph $(V(G),S)$. One important graph polynomial invariant is the flow polynomial $\varphi_G(t)$ of $G$, introduced by Tutte \cite{Tutte1954}, whose value $\varphi_G(k)$ at each positive integer $k$ is the number of nowhere-zero flows over an additive abelian group of order $k$. In particular, $\varphi_G(t)$ is trivially zero if $G$ contains a cut-edge (bridge or isthmus). The {\em flow polynomial} admits the expansion over edge subsets:
\[
\varphi_G(t)=\sum_{S\subseteq E(G)}(-1)^{|S^c|}t^{n(S)},
\]
where $S^c:=E(G)\smallsetminus S$. Moreover, it also satisfies the deletion-contraction recurrence:
\[
\varphi_G(t)=
\begin{cases}
(t-1)\varphi_{G\setminus e}(t),&\mbox{if } e \mbox{ is a loop};\\
\varphi_{G/e}(t)-\varphi_{G\setminus e}(t),&\mbox{otherwise}.
\end{cases}
\]

Dual to the flow polynomial is the chromatic polynomial $\chi_G(t)$, introduced by Birkhoff \cite{Birkhoff1912} for planar graphs to address the four-color problem, and later generalized to arbitrary graphs by Whitney \cite{Whitney1932,Whitney1932-1}. Its value $\chi_G(k)$ at each positive integer $k$ is the number of proper vertex colorings using $k$ colors, where a proper vertex coloring means that ends of each edge receive distinct colors. If $G$ is oriented and colors are members of an additive abelian group, then each vertex coloring is a group-valued function and its difference produces a tension of $G$. Viewed in this way proper vertex colorings correspond to nowhere-zero tensions. The number of nowhere-zero tensions over an additive abelian group of order $k$ is given by the polynomial $\tau_G(k)$, introduced by Tutte \cite{Tutte1954} but formally named the tension polynomial by Kochol \cite{Kochol2002}. 

As generalizations of the chromatic, tension, and flow polynomials, bivariate polynomials were introduced by Whitney and Tutte. The rank generating function of $G$ has a standard edge subgraph expansion in \cite{Whitney1932-1}. Tutte \cite{Tutte1954,Tutte1967} introduced the Tutte polynomial of $G$ as a sum over all spanning forests, where the exponents correspond to internal and external activities. In fact, the Tutte polynomial also admits a standard edge subgraph expansion. All of these polynomials satisfy the fundamental deletion-contraction relations.

Inspired by the earlier work, one of our goals is to construct graph polynomials arising from certain partitions of vertices that enjoy properties analogous to those of the chromatic, tension, flow and Tutte polynomials. Specifically, the desired polynomials are required to satisfy counting interpretations, expansion formulas over edge subgraphs, and deletion-contraction recurrences. To be more precise, we first review some basic notation for graphs.

Fix an edge $e\in E(G)$. The graph $G\setminus e$  is obtained by {\em deleting} the edge $e$ from $G$, and the graph $G/e$ is obtained by {\em contracting} $e$, that is, identifying the ends of $e$ to create a single new vertex and then removing $e$. Generally, for any $S\subseteq E(G)$, we write $G\setminus S$ (respectively $G/S$) to denote the graph obtained by deleting (respectively contracting) all of the edges in $S$. In particular, $G\setminus S^c$ is the {\em spanning subgraph} $(V(G), S)$ induced by $S$, which is also denoted by $G|S$. For each vertex subset $V'\subseteq V(G)$, let $G[V']$ denote the {\em vertex-induced subgraph} of $G$, consisting of the vertex set $V'$ and all edges with ends in $V'$. In addition, the edge set $E(G)$ can be naturally decomposed into the two disjoint parts:
\[
E(G) = E_{\rm cyc}(G)\sqcup E_{\rm cut}(G),
\]
where $E_{\rm cyc}(G)$ denotes the set of {\em cycle-edges} (edges contained in at least one cycle) and $E_{\rm cut}(G)$ is the set of {\em cut-edges} (edges not contained in any cycle). 

A {\em partition} $\pi$ of $V(G)$ is a collection of nonempty, pairwise disjoint vertex subsets such that their union equals $V(G)$. Let $\Pi(G)$ denote the poset of partitions $\pi$ of $V(G)$ such that each block $V_i$ of $\pi$ induces a connected subgraph $G[V_i]$. The partial order $\pi_1\le\pi_2$ ($\pi_1$ is {\em finer} than $\pi_2$) of partitions $\pi_1$ and $\pi_2$ means that each block of $\pi_1$ is contained in a block of $\pi_2$. For any $S\subseteq E(G)$, let $\pi(S)$ be the partition of $V(G)$ induced by the spanning subgraph $G|S$, whose blocks are vertex sets of components of  $G|S$. Then $\pi$ leads to a surjective map from the set of all edge subsets of $E(G)$ to $\Pi(G)$. 
Thus, we have
\[
\Pi(G)=\big\{\pi(S):S\subseteq E(G)\big\}.
\]
For instance, the partition $\pi(\emptyset)$ is the minimal member of $\Pi(G)$ consisting of the singletons $\{v\}$ with $v\in V(G)$, and $\pi(E(G))$ is the maximal member of $\Pi(G)$ consisting of the vertex sets of components of $G$. For the empty graph $E_n$, the set $\Pi(E_n)$ contains only one partition $\pi(E_n)$ whose blocks are singletons. A subset $\mathfrak{p}$ of $\Pi(G)$ is called a {\em selection} of $G$.

Associated with a selection $\mathfrak{p}$ of $G$, we define the {\em partition-selected chromatic polynomial} $\chi_G(\mathfrak{p},t)$,  {\em partition-selected tension polynomial} $\tau_G(\mathfrak{p},t)$, {\em partition-selected flow polynomial} $\varphi_G(\mathfrak{p},t)$ and {\em partition-selected rank generating function} $R_G(\mathfrak{p},x,y)$ as:
\begin{align}
&\chi_G(\mathfrak{p},t):=\sum_{S\subseteq E(G),\,\pi(S)\in\mathfrak{p}}(-1)^{|S|}t^{c(S)},\label{Partition-Chromatic-Def}\\
&\tau_G(\mathfrak{p},t):=\sum_{S\subseteq E(G),\,\pi(S)\in\mathfrak{p}}(-1)^{|S|}t^{r(G)-r(S)},\label{Partition-Tension-Def}\\
&\varphi_G(\mathfrak{p},t):=\sum_{S\subseteq E(G),\,\pi(S)\in\mathfrak{p}}(-1)^{|S^c|}t^{n(S)},\label{Partition-Flow-Def}\\
&R_G(\mathfrak{p};x,y):=\sum_{S\subseteq E(G),\,\pi(S)\in\mathfrak{p}}x^{r(G)-r(S)}y^{n(S)}\label{Partition-Rank-Def}.
\end{align}
For the empty selection $\emptyset$ of $\Pi(G)$, we adopt the following  conventions:
\[
\chi_G(\emptyset,t)=\tau_G(\emptyset,t)=\varphi_G(\emptyset,t)=R_G(\emptyset;x,y)=0.
\]
We further define the {\em partition-selected Tutte polynomial} $T_G(\mathfrak{p};x,y)$ as 
\begin{equation}\label{Partition-Tutte-Def}
T_G(\mathfrak{p};x,y):=R_G(\mathfrak{p};x-1,y-1).
\end{equation}
When $\mathfrak{p}=\Pi(G)$, the partition-selected graph polynomial reduces to the corresponding ordinary graph polynomial. Particularly, for the empty graph $E_n$ with $n$ vertices, 
\[
\chi_{E_n}\big(\Pi(E_n),t\big)=t^n \;\And\; \tau_{E_n}\big(\Pi(E_n),t\big)=\varphi_{E_n}\big(\Pi(E_n),t\big)=T_{E_n}\big(\Pi(E_n);x,y\big)=1.
\]

The partition-selected Tutte polynomial  generalizes the partition-selected chromatic, tension and flow polynomials, see \autoref{Partition-Relations-CTF-Tutte}. \autoref{Partition-CTFT-DC} shows that all partition-selected graph polynomials satisfy the corresponding deletion-contraction recurrence relations. 

Our further purpose is to give a combinatorial interpretation for the partition-selected flow polynomial, analogous to that for the classical flow polynomial. Nowhere-zero $\mathbb{Z}_k$-flows were initially introduced by Tutte in \cite{Tutte1949,Tutte1954} as a dual concept to graph coloring. A planar graph is $k$-colorable if and only if its dual graph admits a nowhere-zero $\mathbb{Z}_k$-flow. The analogue of a $\mathbb{Z}_k$-flow is an integer $k$-flow, where the flow values on edges are integers with absolute value strictly less than $k$. A fundamental result due to Tutte \cite{Tutte1949} states that a graph has a nowhere-zero $k$-flow if and only if it admits a nowhere-zero $A$-flow for any abelian group $A$ of order $k$. Comprehensive surveys on nowhere-zero flows can be found in \cite{Jaeger1988,Seymour1995}. 

Let $A$ be a finite additive abelian group. We denote by $A^{E(G)}$ and $A^{V(G)}$ the sets of functions from $E(G)$ to $A$ and from $V(G)$ to $A$, respectively. Associated with an orientation of $G$, the {\em incidence matrix} of $G$ is the $|V(G)|\times |E(G)|$ integral matrix $M_G:=(m_{ve})$ whose rows and columns are indexed by the vertices and edges, where, for a vertex $v$ and an edge $e$,
\[
m_{ve}:=\begin{cases}
1, & \text{ if } e \text{ is a link and } v \text{ is the head of } e;\\
-1,& \text{ if } e \text{ is a link and } v \text{ is the tail of } e;\\
0,& \text{ otherwise},
\end{cases}
\]
and further let $E^+(v)$ be the set of edges with $v$ as the head, and $E^-(v)$ be the set of edges with $v$ as the tail. The {\em boundary operator}  is a map $\partial:A^{E(G)}\to A^{V(G)}$, defined by
\[
\partial\bm c(v)=\sum_{e \in E^{+}(v)} \bm c(e)-\sum_{e \in E^{-}(v)}\bm c(e).
\]
Equivalently, the boundary operator can be expressed via the incidence matrix as
\begin{equation}\label{Boundary-Matrix}
\partial\bm c(v):=\sum_{e\in E(G)}m_{ve}\bm c(e).
\end{equation}
The members in the kernel of $\partial$ are called {\em flows}. The {\em boundary group} $B(G,A)$ of $G$ is the image of $\partial$, whose members are called {\em boundary chains} of $G$.  Following \eqref{Boundary-Matrix}, we have 
\[
B(G,A)=\big\{M_G\bm c\in A^{V(G)}:\bm c\in A^{E(G)}\big\}.
\]
When $A=\mathbb{F}$ is a field, the boundary group is a vector space, called the {\em boundary space}.

To address the 3-flow conjecture (see Unsolved Problem 48, in \cite{Bondy1976}), Jaeger, Linial, Payan and Tarsi \cite{Jaeger1992} introduced the group connectivity of graphs in 1992 by generalizing the nowhere-zero flow to a nonhomogeneous form. Fix $\bm b\in B(G,A)$, and let $F(G,\bm b;A)$ denote the set of all functions $\bm c\in A^{E(G)}$ satisfying $\partial\bm c=\bm b$. Alternatively, from \eqref{Boundary-Matrix}, we have
\[
F(G,\bm b;A)=\big\{\bm c\in A^{E(G)}:M_G\bm c=\bm b\big\}.
\]
We call the elements of $F(G,\bm b;A)$ {\em affine flows} (also known as {\em  $(A,\bm b)$-flows} in \cite{Lai2006}).  Furthermore, an affine flow $\bm c$ of $G$ is said to be  {\em nowhere-zero} if $\bm c(e)\ne 0$ for all $e\in E(G)$ . The set of nowhere-zero affine flows of $G$ with boundary $\bm b$ is denoted by $F_{\rm nz}(G,\bm b;A)$.

Recently, Kochol introduced assigning polynomials to count nowhere-zero affine flows in \cite{Kochol2022}, and later extended the approach to regular matroids in \cite{Kochol2024}. Subsequently, Fu, Ren and Wang  provided an explicit expression for assigning polynomials in \cite[Theorem 1.3]{FRW2025}, and then investigated the  properties of their coefficients. More specifically, let $\Lambda(G)$ be the family of nonempty vertex subsets $X\subseteq V(G)$ such that $G[X]$ is connected and $c(G[V(G)\smallsetminus X])=c(G)$. Adopting Kochol's notation, a {\em $\{0,1\}$-assigning} of $G$ is a map $\alpha$ from $\Lambda(G)$ to the set $\{0,1\}$. Each function  $\bm b\in A^{V(G)}$ automatically induces a $\{0,1\}$-assigning  $\alpha_{G,\bm b}$ on $\Lambda(G)$ satisfying that for each $X\in\Lambda(G)$,  $\alpha_{G,\bm b}(X)=0$ if $\sum_{v\in X}\bm b(v)=0$, and $\alpha_{G,\bm b}(X)=1$ otherwise. For any $\bm b\in B(G,A)$, the {\em assigning polynomial} $\varphi_G(\alpha,t)$ $(\alpha=\alpha_{G,\bm b})$ is given by
\begin{equation}\label{AP}
\varphi_G(\alpha,t):=\sum_{S\subseteq E(G),\,G\setminus S\text{ is $\bm b$-compatible}}(-1)^{|S|}t^{n(S^c)},
\end{equation}
where $G\setminus S$ is {\em $\bm b$-compatible} if $\sum_{v\in H}\bm b(v)=0$ for each component $H$ of $G\setminus S$. 

In this paper, we shall restrict our attention to nowhere-zero affine flows over fields. Our final focus is to study the combinatorial aspects of nowhere-zero affine flows using restricted arrangements and boundary arrangements, and then to find their connections to the partition-selected flow polynomials and assigning polynomials. 

The paper is organized as follows. \autoref{Sec2-0} presents the main results, and the remaining sections contain their proofs.
\section{Main results}\label{Sec2-0}
This section states the main results, and their proofs will be given in later sections. Throughout this paper,  for each edge $e$ of $G$ with ends $u$ and $v$ (not necessarily distinct), we always set $G':=G\setminus e$ and $G^{''}:=G/e$. Let $\Pi_e(G)$ be the set of partitions $\pi\in\Pi(G)$ such that the ends of $e$ are contained in a block of $\pi$, i.e., 
\begin{equation}\label{Pi-e}
\Pi_e(G):=\big\{\pi\in\Pi(G):\{u,v\} \text{ is contained in a block of } \pi\big\}.
\end{equation}
Then, a selection $\mathfrak{p}$ of $G$ induces a selection $\mathfrak{p}'$ of  $G'$ and a selection $\mathfrak{p}^{''}$ of $G^{''}$,  defined by
\begin{equation}\label{Triple-Partitions}
\mathfrak{p}':=\mathfrak{p}\cap\Pi(G')\quad\And\quad \mathfrak{p}^{''}:=\big\{\pi/e:\pi\in\mathfrak{p}\cap\Pi_e(G)\big\}\subseteq\Pi(G^{''}),
\end{equation}
where $\pi$ is required not to separate ends of $e$, and $\pi/e$ is the partition of $G^{''}$ obtained from $\pi$ by identifying the ends of $e$ into a single new vertex of $G^{''}$. Our first main result is as follows.
\begin{theorem}\label{Partition-CTFT-DC}Let $\mathfrak{p}\subseteq\Pi(G)$ and $e\in E(G)$. The partition-selected chromatic, tension, flow and  Tutte polynomials  satisfy
\begin{align}
&\chi_G(\mathfrak{p},t)=\begin{cases} 0,&\text{ if } e \text{ is a loop};\\
\chi_{G'}(\mathfrak{p}',t)-\chi_{G^{''}}(\mathfrak{p}^{''},t),&\text{ otherwise},
\end{cases}\label{Partition-Chromatic-DC}\\
&\tau_G(\mathfrak{p},t)=\begin{cases} 0,&\text{ if } e \text{ is a loop};\\
t\tau_{G'}(\mathfrak{p}',t)-\tau_{G^{''}}(\mathfrak{p}^{''},t),&\text{ if } e \text{ is a cut-edge};\\
\tau_{G'}(\mathfrak{p}',t)-\tau_{G^{''}}(\mathfrak{p}^{''},t),&\text{ otherwise},
\end{cases}\label{Partition-Tension-DC}\\
&\varphi_G(\mathfrak{p},t)=\begin{cases}
 (t-1)\varphi_{G'}(\mathfrak{p}',t),&\text{ if } e \text{ is a loop};\\
\varphi_{G^{''}}(\mathfrak{p}^{''},t)-\varphi_{G'}(\mathfrak{p}',t),&\text{ otherwise},
\end{cases}\label{Partition-Flow-DC}\\
&T_G(\mathfrak{p};x,y)=\begin{cases}
yT_{G'}(\mathfrak{p}';x,y),&\text{ if } e \text{ is a loop};\\
(x-1)T_{G'}(\mathfrak{p}';x,y)+T_{G^{''}}(\mathfrak{p}^{''};x,y),&\text{ if } e \text{ is a cut-edge};\\
T_{G'}(\mathfrak{p}';x,y)+T_{G^{''}}(\mathfrak{p}^{''};x,y),&\text{ otherwise}.
\end{cases}\label{Partition-Tutte-DC}
\end{align}
Moreover, the partition-selected chromatic and tension polynomials are related by 
\[
\chi_G(\mathfrak{p},t)=t^{c(G)}\tau_G(\mathfrak{p},t).
\]
\end{theorem}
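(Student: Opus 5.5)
The plan is to split each defining sum over $S\subseteq E(G)$ with $\pi(S)\in\mathfrak{p}$ according to whether $e\in S$. Each part is then matched with the corresponding sum for $G'$ or $G''$. The key combinatorial step is a pair of bijections.

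\textbf{Subsets avoiding $e$.} For $S$ with $e\notin S$, the sets $S$ are exactly the subsets of $E(G')$. The partition $\pi(S)$ computed in $G$ coincides with that computed in $G'$, and lies in $\Pi(G')\subseteq\Pi(G)$. Note that $\Pi(G')$ consists of the partitions of the form $\pi(S)$ with $S\subseteq E(G')$; each of these has connected blocks in $G'$, hence in $G$. Therefore $\pi(S)\in\mathfrak{p}$ if and only if $\pi(S)\in\mathfrak{p}'=\mathfrak{p}\cap\Pi(G')$. Moreover $c_G(S)=c_{G'}(S)$ and $|V(G)|=|V(G')|$, so $r$ and $n$ agree.

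\textbf{Subsets containing $e$, $e$ not a loop.} Write $S=T\cup\{e\}$ with $T\subseteq E(G'')$. The partition $\pi_G(S)$ lies in $\Pi_e(G)$, and $\pi_{G''}(T)=\pi_G(S)/e$. The map $\pi\mapsto\pi/e$ is injective on $\Pi_e(G)$, since un-identifying the merged vertex recovers $\pi$. Hence $\pi_G(S)\in\mathfrak{p}$ if and only if $\pi_{G''}(T)\in\mathfrak{p}''$. We also have $c_G(S)=c_{G''}(T)$, $|V(G'')|=|V(G)|-1$, $r_G(S)=r_{G''}(T)+1$, $|S|=|T|+1$ and $n_G(S)=n_{G''}(T)$.

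\textbf{Subsets containing $e$, $e$ a loop.} Here $\pi(T\cup\{e\})=\pi(T)$ and $e\in E(G')$. Both the subsets with and those without $e$ therefore range over $T\subseteq E(G')$ with $\pi(T)\in\mathfrak{p}'$, noting $\Pi(G')=\Pi(G)$. Adding $e$ keeps $c$ and $r$ fixed and raises $n$ by one. So the chromatic and tension terms cancel in pairs, giving $0$. For the flow polynomial, the two terms $-(t-1)^{0}\cdots$ combine as $(-1)^{|T^c|+1}t^{n}+(-1)^{|T^c|}t^{n+1}$, giving the factor $(t-1)$. For the Tutte polynomial, $R_G=(1+y)R_{G'}$, which becomes $yT_{G'}$ after the shift $y\mapsto y-1$.

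\textbf{Collecting terms, non-loop case.} For $\chi$ the $e\in S$ part is $-\chi_{G''}(\mathfrak{p}'',t)$. For the flow polynomial, $|S^c|$ in $G$ equals $|T^c|$ in $G''$, and for $e\notin S$ the complement in $G$ has one more element than in $G'$; this gives $\varphi_{G''}-\varphi_{G'}$. For $\tau$ and $R$, the exponent $r(G)-r(S)$ must be compared with the graph-level rank. We have $r(G'')=r(G)-1$, so the $e\in S$ part gives $-\tau_{G''}$, respectively $R_{G''}$. For $e\notin S$, $r(G')=r(G)$ unless $e$ is a cut-edge, in which case $r(G')=r(G)-1$. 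This produces the extra factor $t$, respectively $x$, which becomes $x-1$ for $T$ after substitution.

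\textbf{Relation between $\chi$ and $\tau$.} We have $c(S)=|V|-r(S)=c(G)+r(G)-r(S)$, which gives $\chi_G(\mathfrak{p},t)=t^{c(G)}\tau_G(\mathfrak{p},t)$ directly.

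\textbf{Main obstacle.} The main care needed is the selection bookkeeping in the two bijections. One must check that $\mathfrak{p}'=\mathfrak{p}\cap\Pi(G')$ is precisely the set of partitions of $\mathfrak{p}$ realized by subsets avoiding $e$. This holds because $\pi(S)$ is determined by $S$, and a partition in $\Pi(G')$ is realized by some $S\subseteq E(G')$; what matters is only whether the particular $\pi(S)$ lies in $\mathfrak{p}$. One must also check that $\pi\mapsto\pi/e$ is injective on $\Pi_e(G)$. The empty-selection conventions are consistent, since each side then becomes an empty sum equal to $0$.
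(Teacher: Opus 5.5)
Your proposal is correct and follows essentially the paper's argument. Like the paper, you split each sum according to whether $e\in S$, identify the two classes with $\mathcal{E}(G',\mathfrak{p}')$ and $\mathcal{E}(G'',\mathfrak{p}'')$, and track rank and nullity. The only difference is organizational: the paper does this bookkeeping once for $R_G(\mathfrak{p};x,y)$ and derives the chromatic, tension and flow recurrences through the specialization identities to $T_G(\mathfrak{p};x,y)$, whereas you do it for each polynomial directly and also make explicit the injectivity of $\pi\mapsto\pi/e$ on $\Pi_e(G)$, which the paper leaves implicit.
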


Next, we give a geometric realization of nowhere-zero affine flows using certain restricted arrangements. To this end, let us review basic definitions of hyperplane arrangements (see \cite{Stanley2007}). A {\em hyperplane arrangement} $\mathcal{A}$ is a finite set of (affine) hyperplanes in a vector space $V$ over a field $\mathbb{F}$. The {\em intersection poset} $L(\mathcal{A})$ is a poset consisting of all nonempty intersections of some  elements in $\mathcal{A}$, ordered by the reverse inclusion, and including the whole space $V=\bigcap_{H\in\emptyset}H$ as the minimal element. Every member in $L(\mathcal{A})$ is called a {\em flat}. As a generalization of the chromatic polynomial, the {\em characteristic polynomial} $\chi(\mathcal{A},t)$ of $\mathcal{A}$ is 
\[
\chi(\mathcal{A},t):=\sum_{\mathcal{B}\subseteq \mathcal{A},\,\bigcap_{H\in\mathcal{B}}H\ne\emptyset}(-1)^{|\mathcal{B}|}t^{{\rm dim}\bigcap_{H\in\mathcal{B}}H}.
\]
Fix a flat $X\in L(\mathcal{A})$. The {\em restriction} $\mathcal{A}/X$ of $\mathcal{A}$ to $X$ is a hyperplane arrangement in $X$, given by
\[
\mathcal{A}/X=\{H\cap X\ne\emptyset: H\in \mathcal{A}\And X\nsubseteq H\}.
\]
The {\em complement} of $\mathcal{A}$ is  $M(\mathcal{A}):=V\smallsetminus \bigcup_{H\in \mathcal{A}}H$. The ambient space $V$ has the set partition
\begin{equation}\label{DA}
V=\bigsqcup_{X\in L(\mathcal{A})}M(\mathcal{A}/X),
\end{equation}
i.e., for each point $\bm p\in V$ there is a unique flat $X$ of $\mathcal{A}$ satisfying $\bm p\in M(\mathcal{A}/X)$, denoted by $X_{\bm p}$. Indeed, $X_{\bm p}$ is the inclusion-minimal element of $L(\mathcal{A})$ containing $\bm p$. The property implies that the minimal flat $X_{\bm p}$ containing $\bm p$ can be expressed as
\begin{equation}\label{Flat}
X_{\bm p}=\bigcap_{H\in\mathcal{A},\, \bm p\in H}H.
\end{equation}

Associated with a finite graph $G$, the {\em coordinate arrangement} of $G$ is a hyperplane arrangement $\mathcal{A}_{E(G)}$ in the vector space $\mathbb{F}^{E(G)}$, defined by
\[
\mathcal{A}_{E(G)}:=\big\{H_e:x_e=0\mid e\in E(G)\big\}.
\]
Recall that an affine flow $\bm c$ with boundary $\bm b$ is nowhere-zero if and only if $\bm c(e)\ne 0$ for all $e\in E(G)$. Given a function $\bm b\in \mathbb{F}^{V(G)}$, we naturally consider two types of {\em restricted arrangements} $\mathcal{A}_{E(G)}^{\bm b}$  and $\tilde{\mathcal{A}}_{E(G)}^{\bm b}$ of $\mathcal{A}_{E(G)}$ by restricting $\mathcal{A}_{E(G)}$ to the affine subspace $F(G,\bm b;\mathbb{F})$. Specifically, the restriction of $\mathcal{A}_{E(G)}$ to $F(G,\bm b;\mathbb{F})$ yields the subspace arrangement in $F(G,\bm b;\mathbb{F})$, which may contain the whole space $F(G,\bm b;\mathbb{F})$ and is given by 
\[
\mathcal{A}_{E(G)}^{\bm b}:=\big\{H_e^{\bm b}:=H_e\cap F(G,\bm b;\mathbb{F}): H_e^{\bm b}\ne\emptyset, e\in E(G)\big\}.
\]
Similarly, we denote by $\tilde{\mathcal{A}}_{E(G)}^{\bm b}$ the hyperplane arrangement in $F(G,\bm b;\mathbb{F})$ obtained from $\mathcal{A}_{E(G)}^{\bm b}$ by removing the ambient space $F(G,\bm b;\mathbb{F})$, i.e., $\tilde{\mathcal{A}}_{E(G)}^{\bm b}:=\mathcal{A}_{E(G)}^{\bm b}\smallsetminus F(G,\bm b;\mathbb{F})$. Notably, when $\bm b\in\mathbb{F}^{V(G)}\smallsetminus B(G,\mathbb{F})$, we have $F(G,\bm b;F)=\emptyset$. In this case, we naturally have $\mathcal{A}_{E(G)}^{\bm b}=\tilde{\mathcal{A}}_{E(G)}^{\bm b}=\emptyset$, and the corresponding characteristic polynomial is the zero polynomial. Therefore, we will restrict our attention to $\bm b\in B(G,\mathbb{F})$ in the remainder of this paper.

For each partition $\pi\in \Pi(G)$, a {\em boundary subspace} $B_\pi$ is defined by
\[
B_\pi:=\Big\{\bm x\in \mathbb{F}^{V(G)}: \sum_{v\in V_i}\bm x(v)=0\text{ for every block }V_i\in\pi\Big\}.
\]
In fact, $B_\pi$ is a subspace of the boundary space $B(G,\mathbb{F})$, and also coincides with the boundary space of some spanning subgraph of $G$, i.e.,
\begin{equation}\label{Identity}
B(G\setminus S,\mathbb{F})=B_{\pi(S^c)},
\end{equation}
whose detailed explanations are presented in \autoref{Sec4-1}. For any $S\subseteq E(G)$, set $H_S^{\bm b}:=\bigcap_{e\in S}H_e^{\bm b}$. Then $H_S^{\bm b}$ consists of affine flows with boundary $\bm b$ that vanish on every edge in $S$, and hence it is naturally identified with $F(G\setminus S,\bm b;\mathbb{F})$. Combining this with
\eqref{Identity}, we obtain
\begin{equation}\label{eq:equivalence for coset intersection nonempty}
H_S^{\bm b}\ne\emptyset\iff \bm b\in B_{\pi(S^c)},\quad \forall\, S\subseteq E(G).
\end{equation}

After introducing the necessary preliminary concepts, we now present our second main result. In this result, we use the restricted arrangements $\tilde{\mathcal{A}}_{E(G)}^{\bm b}$ and $\mathcal{A}_{E(G)}^{\bm b}$  to describe nowhere-zero affine flows.  We then show that the three polynomials $\chi(\tilde{\mathcal{A}}_{E(G)}^{\bm b},t)$, $\varphi_{G\setminus E_{\rm cut}^{\bm b}(G)}(\alpha,t)$ with $\alpha=\alpha_{G\setminus E_{\rm cut}^{\bm b}(G),\bm b}$ and $\varphi_G(\tilde{\Pi}_{\bm b},t)$ are identical, where
\[
E_{\rm cut}^{\bm b}(G):=\big\{e\in E_{\rm cut}(G):\bm b\in B(G\setminus e,\mathbb{F})\big\}.
\]
Likewise, the polynomials $\chi(\mathcal{A}_{E(G)}^{\bm b},t)$, $\varphi_G(\alpha,t)$ with $\alpha=\alpha_{G,\bm b}$ and $\varphi_G(\Pi_{\bm b},t)$ are also identical.
\begin{theorem}[Counting Pattern of Nowhere-Zero Affine Flows]\label{thm:nowhere-zero affine flow counting}
Let $\mathbb{F}$ be a field and $\bm b\in B(G,\mathbb{F})$.  Then,
\begin{itemize}
\item [{\rm (a)}]
$\chi(\tilde{\mathcal{A}}_{E(G)}^{\bm b},t)$ is a polynomial of degree $n(G)$ with leading coefficient $1$. 
\item[{\rm (b)}] $\chi(\tilde{\mathcal{A}}_{E(G)}^{\bm b},t)=\varphi_{G\setminus E_{\rm cut}^{\bm b}(G)}(\alpha,t)=\varphi_G(\tilde{\Pi}_{\bm b},t)$ with $\alpha=\alpha_{G\setminus E_{\rm cut}^{\bm b}(G),\bm b}$, where $\tilde{\Pi}_{\bm b}:=\big\{\pi\in\tilde{\Pi}(G):\bm b\in B_{\pi}\big\}$. In particular, $\varphi_G(\tilde{\Pi}_{\bm 0},t)$ is the flow polynomial $\varphi_{G\setminus E_{\rm cut}(G)}(t)$.
\item[{\rm (c)}]If $\mathbb{F}=\mathbb{F}_q$ is a finite field of $q$ elements, then
\[
\chi(\tilde{\mathcal{A}}_{E(G)}^{\bm b},q)=|M(\tilde{\mathcal{A}}_{E(G)}^{\bm b})|=|F_{\rm nz}(G\setminus E^{\bm b}_{\rm cut}(G),\bm b;\mathbb{F}_q)|=\varphi_G(\tilde{\Pi}_{\bm b},q).
\]
\end{itemize}
\end{theorem}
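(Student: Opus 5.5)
The plan is to work directly with the definition of $\chi(\tilde{\mathcal{A}}_{E(G)}^{\bm b},t)$ as a sum over subsets of hyperplanes. I will index hyperplanes by edges; if two edges give the same restricted hyperplane, one either notes this does not change the count or works with the edge-indexed multiset, and I would check this point first. Everything then reduces to the identity \eqref{eq:equivalence for coset intersection nonempty} together with the identification $H_S^{\bm b}\cong F(G\setminus S,\bm b;\mathbb{F})$.

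\emph{Which hyperplanes are removed.} First I will determine when $H_e^{\bm b}=F(G,\bm b;\mathbb{F})$, i.e.\ when every affine flow with boundary $\bm b$ vanishes on $e$. If $e$ is a cycle-edge, adding a multiple of the cycle flow through $e$ changes $\bm c(e)$, so $H_e^{\bm b}$ is a proper hyperplane. If $e$ is a cut-edge, then $\bm c(e)$ is forced, up to sign, to be $\sum_{v\in X}\bm b(v)$, where $X$ is one side of the cut. This value is zero exactly when $\bm b\in B(G\setminus e,\mathbb{F})$. Hence the removed members are precisely the $H_e^{\bm b}$ with $e\in E_{\rm cut}^{\bm b}(G)$, and $F(G,\bm b;\mathbb{F})$ is identified with $F(G\setminus E_{\rm cut}^{\bm b}(G),\bm b;\mathbb{F})$. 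Its dimension is $n(G)$ because $\bm b\in B(G,\mathbb{F})$.

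\emph{Expanding the characteristic polynomial.} For $S\subseteq E(G)\smallsetminus E_{\rm cut}^{\bm b}(G)$ with $H_S^{\bm b}\neq\emptyset$, the flat $H_S^{\bm b}$ is a coset of the flow space of $G\setminus S$, so its dimension is $n(S^c)$. This gives
\[
\chi(\tilde{\mathcal{A}}_{E(G)}^{\bm b},t)=\sum_{S\subseteq E(G)\smallsetminus E_{\rm cut}^{\bm b}(G),\ \bm b\in B_{\pi(S^c)}}(-1)^{|S|}t^{n(S^c)}.
\]
Part (a) follows at once. The term $S=\emptyset$ contributes $t^{n(G)}$. Every $S\neq\emptyset$ has $H_S^{\bm b}$ contained in a proper hyperplane of the $n(G)$-dimensional space, so its term has smaller degree.

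For (b), substitute $T=S^c$, which turns the sum into a sum over $T\supseteq E_{\rm cut}^{\bm b}(G)$ with $\bm b\in B_{\pi(T)}$, weighted by $(-1)^{|T^c|}t^{n(T)}$. Two points are needed.
\begin{enumerate}
\item The constraint $T\supseteq E_{\rm cut}^{\bm b}(G)$ can be replaced by $T\supseteq E_{\rm cut}(G)$. If $e\in E_{\rm cut}(G)\smallsetminus E_{\rm cut}^{\bm b}(G)$ and $e\notin T$, then $\pi(T)\le\pi(E(G)\smallsetminus e)$. Hence $B_{\pi(T)}\subseteq B(G\setminus e,\mathbb{F})$, which does not contain $\bm b$, so such $T$ never contribute.
\item The condition $T\supseteq E_{\rm cut}(G)$ depends only on $\pi(T)$. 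A cut-edge whose ends lie in one block of $\pi(T)$ must itself lie in $T$, since it is the only path between its ends. So the condition says that $\pi(T)$ separates the ends of no cut-edge, which I take to be the description of $\tilde{\Pi}(G)$.
\end{enumerate}
Together these give $\chi(\tilde{\mathcal{A}}_{E(G)}^{\bm b},t)=\varphi_G(\tilde{\Pi}_{\bm b},t)$ by \eqref{Partition-Flow-Def}. For the assigning polynomial, apply \eqref{AP} to $G_0:=G\setminus E_{\rm cut}^{\bm b}(G)$, noting that $\bm b\in B(G_0,\mathbb{F})$ by the first step. The condition that $G_0\setminus S$ is $\bm b$-compatible is exactly $\bm b\in B_{\pi(S^c)}$ computed in $G_0$, by \eqref{Identity}. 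Moreover $n$ and $\pi$ agree in $G_0$ and $G$ once the forced-zero edges are accounted for, so term-by-term the sum matches the display above. The case $\bm b=\bm 0$ gives $E_{\rm cut}^{\bm 0}(G)=E_{\rm cut}(G)$, and the sum becomes the subset expansion of $\varphi_{G\setminus E_{\rm cut}(G)}(t)$.

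For (c), over $\mathbb{F}_q$ inclusion--exclusion on the complement gives $|M(\tilde{\mathcal{A}}_{E(G)}^{\bm b})|=\sum_{S}(-1)^{|S|}|H_S^{\bm b}|$. Each nonempty $H_S^{\bm b}$ has $q^{\dim}$ points, so this equals $\chi(\tilde{\mathcal{A}}_{E(G)}^{\bm b},q)$. Under the identification from the first step, the points of the complement are exactly the affine flows of $G\setminus E_{\rm cut}^{\bm b}(G)$ that are nonzero on every remaining edge. This gives the stated chain of equalities together with (b).

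I expect the main obstacle to be the bookkeeping in (b). This means verifying that the edge-subset condition $T\supseteq E_{\rm cut}^{\bm b}(G)$ really matches the partition-level definition of $\tilde{\Pi}(G)$, and that $n$ and $\pi$ transfer correctly between $G$ and $G_0$. It also includes the hyperplane-coincidence caveat noted at the start.
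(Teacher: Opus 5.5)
Your proposal is correct and follows essentially the paper's route. You classify $H_e^{\bm b}$ for cut-edges versus cycle-edges, expand $\chi(\tilde{\mathcal{A}}_{E(G)}^{\bm b},t)$ as a subset sum using $\dim H_S^{\bm b}=n(S^c)$ and $H_S^{\bm b}\neq\emptyset\iff\bm b\in B_{\pi(S^c)}$, and then match terms with $\varphi_G(\tilde{\Pi}_{\bm b},t)$ and with the assigning polynomial of $G\setminus E_{\rm cut}^{\bm b}(G)$. For that last match, the forced-zero cut-edges change neither the nullity nor the condition $\bm b\in B_{\pi(\cdot)}$; strictly it is this condition, not the partition $\pi$ itself, that is unchanged.

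The differences from the paper are minor and in your favour. You get $\chi(\tilde{\mathcal{A}}_{E(G)}^{\bm b},q)=|M(\tilde{\mathcal{A}}_{E(G)}^{\bm b})|$ by finite-field inclusion--exclusion instead of invoking Kochol's counting theorem for assigning polynomials. You also make explicit two points the paper leaves implicit. The first is that $\pi(T)\in\tilde{\Pi}(G)$ iff $T\supseteq E_{\rm cut}(G)$. The second is that coinciding hyperplanes $H_{e_1}^{\bm b}=H_{e_2}^{\bm b}$ do not change $\chi$; this check does succeed, since $m$ copies of a hyperplane contribute $\sum_{k\ge 1}\binom{m}{k}(-1)^k=-1$, exactly as a single copy does.
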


\begin{theorem}\label{thm:nowhere-zero affine flow counting1}
Let $\mathbb{F}$ be a field and $\bm b\in B(G,\mathbb{F})$. Then,
\begin{itemize}
\item[{\rm (a)}] $\chi(\mathcal{A}_{E(G)}^{\bm b},t)=\chi(\tilde{\mathcal{A}}_{E(G)}^{\bm b},t)$ is a polynomial of degree $n(G)$ with leading coefficient $1$ if $E_{\rm cut}^{\bm b}(G)=\emptyset$, and vanishes identically otherwise. 
\item[{\rm (b)}] $\chi(\mathcal{A}_{E(G)}^{\bm b},t)=\varphi_G(\alpha,t)=\varphi_G(\Pi_{\bm b},t)$ with $\alpha=\alpha_{G,\bm b}$, where $\Pi_{\bm b}:=\big\{\pi\in\Pi(G):\bm b\in B_\pi\big\}$. In particular, $\varphi_G(\Pi_{\bm 0},t)$ is the flow polynomial $\varphi_G(t)$.
\item[{\rm (c)}]If $\mathbb{F}=\mathbb{F}_q$ is a finite field of $q$ elements,  then
\[
\chi(\mathcal{A}_{E(G)}^{\bm b},q)=|M(\mathcal{A}_{E(G)}^{\bm b})|=|F_{\rm nz}(G,\bm b;\mathbb{F}_q)|=\varphi_G(\Pi_{\bm b},q).
\]
\end{itemize}
\end{theorem}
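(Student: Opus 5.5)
We prove \autoref{thm:nowhere-zero affine flow counting1} directly from the definition of $\chi(\mathcal{A}_{E(G)}^{\bm b},t)$. We read the characteristic polynomial of the subspace arrangement $\mathcal{A}_{E(G)}^{\bm b}$ with the same formula, summing over subfamilies with nonempty intersection. Index subfamilies by edge sets $S\subseteq E(G)$, with $H_S^{\bm b}=F(G\setminus S,\bm b;\mathbb{F})$. If $H_S^{\bm b}\neq\emptyset$, it is a coset of the flow space of $G\setminus S$, so its dimension is $n(S^c)$. Some care is needed because distinct edges may give the same member $H_e^{\bm b}$, and a member may equal the ambient space. We therefore adopt the convention that $\mathcal{A}_{E(G)}^{\bm b}$ is the multiset indexed by the edges $e$ with $H_e^{\bm b}\ne\emptyset$. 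The excluded edges are exactly those $e$ for which no $S\ni e$ has $H_S^{\bm b}\ne\emptyset$, so they never contribute. By \eqref{eq:equivalence for coset intersection nonempty} we then get
\[
\chi(\mathcal{A}_{E(G)}^{\bm b},t)=\sum_{S\subseteq E(G),\,\bm b\in B_{\pi(S^c)}}(-1)^{|S|}t^{n(S^c)} .
\]
Substituting $T=S^c$ turns this into $\sum_{T:\,\pi(T)\in\Pi_{\bm b}}(-1)^{|T^c|}t^{n(T)}=\varphi_G(\Pi_{\bm b},t)$. Moreover, $\bm b\in B_{\pi(S^c)}$ says exactly that every component of $G\setminus S$ has zero $\bm b$-sum, which is $\bm b$-compatibility. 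Comparing with \eqref{AP} gives $\varphi_G(\alpha,t)$, and so (b) follows. For $\bm b=\bm 0$ every partition lies in $\Pi_{\bm 0}$, recovering $\varphi_G(t)$.

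For (c), apply inclusion--exclusion on $F(G,\bm b;\mathbb{F}_q)$ over the sets $H_e^{\bm b}$, which are the affine flows vanishing at $e$. This gives
\[
|F_{\rm nz}(G,\bm b;\mathbb{F}_q)|=\sum_S(-1)^{|S|}|H_S^{\bm b}|=\sum_{S:\,H_S^{\bm b}\neq\emptyset}(-1)^{|S|}q^{n(S^c)},
\]
which is the same sum as above. The identification $|M(\mathcal{A}^{\bm b}_{E(G)})|=|F_{\rm nz}|$ is immediate from the definitions, since a point of $F(G,\bm b;\mathbb{F}_q)$ is nowhere-zero exactly when it avoids every $H_e$.

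For (a), split according to whether $\mathcal{A}_{E(G)}^{\bm b}$ contains the ambient space. Some $H_e^{\bm b}$ equals all of $F(G,\bm b;\mathbb{F})$ exactly when every affine flow with boundary $\bm b$ vanishes on $e$. Since flows of $G$ are supported on $E_{\rm cyc}(G)$, the flow space is zero on $e$ precisely when $e$ is a cut-edge. For such $e$ the value is then forced, and it equals $0$ exactly when $\bm b\in B(G\setminus e,\mathbb{F})$, i.e.\ when $e\in E^{\bm b}_{\rm cut}(G)$.

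Case 1: $E_{\rm cut}^{\bm b}(G)=\emptyset$. Then no member is the whole space, so $\mathcal{A}^{\bm b}_{E(G)}=\tilde{\mathcal{A}}^{\bm b}_{E(G)}$, and the degree and leading-coefficient claims follow from \autoref{thm:nowhere-zero affine flow counting}(a).

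Case 2: some $e_0\in E_{\rm cut}^{\bm b}(G)$ exists. Pair each $S\not\ni e_0$ with $S\cup\{e_0\}$. Since $H_{e_0}^{\bm b}$ is the whole space, $H_{S\cup\{e_0\}}^{\bm b}=H_S^{\bm b}$, so the two terms have the same nonemptiness, the same dimension and opposite signs, and they cancel. Hence $\chi\equiv0$. Equivalently, the factor $1-1$ appears, as in the loop case of the flow recurrence \eqref{Partition-Flow-DC} after deleting $e_0$.

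The main obstacle is the bookkeeping around the conventions: treating the arrangement as indexed by edges, and handling members equal to the ambient space. The linear-algebra fact that a cut-edge value is forced by $\bm b$ also needs care. I would state the multiset convention explicitly at the start, and prove the forced-value fact by summing $\bm b$ over one side of the cut $\{e\}$.
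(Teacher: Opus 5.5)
Your proof is correct and follows essentially the paper's route. Part (a) uses the same case split on $E^{\bm b}_{\rm cut}(G)$ via the cut-edge/cycle-edge description of $H_e^{\bm b}$; your pairing $S\leftrightarrow S\cup\{e_0\}$ just makes explicit why an arrangement containing its ambient space has $\chi\equiv 0$. Part (b) uses the same edge-subset expansion of $\chi$, matched against $\Pi_{\bm b}$ and against $\bm b$-compatibility.

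The only deviations are harmless. In (c) you count $F_{\rm nz}(G,\bm b;\mathbb{F}_q)$ by direct inclusion--exclusion instead of invoking Kochol's counting interpretation of the assigning polynomial, which makes that step self-contained. Your edge-indexed (multiset) convention gives the same polynomial as the set-theoretic definition, because $m$ coinciding members contribute $\sum_{\emptyset\neq J\subseteq[m]}(-1)^{|J|}=-1$, exactly as a single copy does.
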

Parts (c) of  \autoref{thm:nowhere-zero affine flow counting} and \autoref{thm:nowhere-zero affine flow counting1} show that  the polynomial $\varphi_G(\Pi_{\bm b},t)$ counts the number of nowhere-zero affine flows with boundary $\bm b$, and $\varphi_G(\tilde{\Pi}_{\bm b},t)$ enumerates nowhere-zero affine flows after deleting the cut-edges that are forced to carry value zero. Accordingly, we refer to $\varphi_G(\tilde{\Pi}_{\bm b},t)$  and $\varphi_G(\Pi_{\bm b},t)$ as {\em affine flow polynomials}.

To further study the combinatorial properties of nowhere-zero affine flows, we now introduce two types of boundary arrangements in $B(G,\mathbb{F})$ that correspond to special vertex partitions of $G$ arising from its bond structure. These geometric objects serve as key ingredients in our classification of restricted arrangements and  partition-selected flow polynomials.  The {\em boundary arrangement} $\mathcal{A}_{\Pi(G)}$  is a hyperplane arrangement in $B(G,\mathbb{F})$ defined by 
\[
\mathcal{A}_{\Pi(G)}:=\big\{B_\pi:\pi\in\Pi(G), |\pi|=|\pi(E(G))|+1\big\}.
\]
Similarly, let $\tilde{\Pi}(G)$ denote the set of partitions $\pi\in\Pi(G)$ where the ends of each cut-edge are contained in a block of $\pi$, i.e.,
\[
\tilde{\Pi}(G):=\big\{\pi(S)\in \Pi(G):E_{\rm cut}(G)\subseteq S\big\}.
\]
The {\em reduced boundary arrangement} $\mathcal{A}_{\tilde{\Pi}(G)}$ is a hyperplane arrangement in $B(G,\mathbb{F})$ defined as  
\[
\mathcal{A}_{\tilde{\Pi}(G)}:=\big\{B_\pi:\pi\in\tilde{\Pi}(G), |\pi|=|\pi(E(G))|+1\big\}.
\]
It is worth noting that the reduced boundary arrangement $\mathcal{A}_{\tilde{\Pi}(G)}$ is the subarrangement  of $\mathcal{A}_{\Pi(G)}$ obtained by deleting all hyperplanes $B_{\pi(G\setminus e)}$ for $e\in E_{\rm cut}(G)$.

Our third result gives a classification of the restricted arrangements $\tilde{\mathcal{A}}_{E(G)}^{\bm b}$ and $\mathcal{A}_{E(G)}^{\bm b}$, based respectively on the reduced boundary arrangement and the boundary arrangement.
\begin{theorem}[Geometric Classification]\label{Classification}Let $\bm b_1,\bm b_2\in B(G,\mathbb{F})$.
\begin{itemize}
\item[{\rm(a)}] If $\bm b_1,\bm b_2\in M(\mathcal{A}_{\tilde{\Pi}(G)}/\tilde{X})$ for some flat $\tilde{X}\in L(\mathcal{A}_{\tilde{\Pi}(G)})$, then
\[
L(\tilde{\mathcal{A}}_{E(G)}^{\bm b_1})\cong L(\tilde{\mathcal{A}}_{E(G)}^{\bm b_2}),\quad\quad\tilde{\Pi}_{\bm b_1}=\tilde{\Pi}_{\bm b_2}
\]
and 
\[
\chi(\tilde{\mathcal{A}}_{E(G)}^{\bm b_1},t)=\chi(\tilde{\mathcal{A}}_{E(G)}^{\bm b_2},t)=\varphi_G(\tilde{\Pi}_{\bm b_1},t)=\varphi_G(\tilde{\Pi}_{\bm b_2},t).
\]
\item[{\rm(b)}]If $\bm b_1,\bm b_2\in M(\mathcal{A}_{\Pi(G)}/X)$ for some flat $X\in L(\mathcal{A}_{\Pi(G)})$, then
\[
L(\mathcal{A}_{E(G)}^{\bm b_1})\cong L(\mathcal{A}_{E(G)}^{\bm b_2}),\quad\quad\Pi_{\bm b_1}=\Pi_{\bm b_2}
\]
and 
\[
\chi(\mathcal{A}_{E(G)}^{\bm b_1},t)=\chi(\mathcal{A}_{E(G)}^{\bm b_2},t)=\varphi_G(\Pi_{\bm b_1},t)=\varphi_G(\Pi_{\bm b_2},t).
\]
\end{itemize}
\end{theorem}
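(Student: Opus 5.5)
The key observation is that the combinatorial type of $\mathcal{A}_{E(G)}^{\bm b}$ depends only on which boundary subspaces $B_{\pi(S^c)}$ contain $\bm b$. By \eqref{eq:equivalence for coset intersection nonempty}, $H_S^{\bm b}\ne\emptyset$ if and only if $\bm b\in B_{\pi(S^c)}$. When nonempty, $H_S^{\bm b}$ is a coset of the flow space of $G\setminus S$, so its dimension is $n(S^c)$ independently of $\bm b$. Thus for fixed $\bm b$ the poset $L(\mathcal{A}_{E(G)}^{\bm b})$ is determined by two data. The first is the family $\mathcal{N}_{\bm b}=\{S\subseteq E(G): \bm b\in B_{\pi(S^c)}\}$. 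The second is the equality relation $H_S^{\bm b}=H_T^{\bm b}$ for $S,T\in\mathcal{N}_{\bm b}$. This equality holds iff $H_S^{\bm b}\subseteq H_e^{\bm b}$ for all $e\in T$, and by dimension count that reduces to $n(S^c)=n((S\cup T)^c)$, again independent of $\bm b$. Hence the map $S\mapsto H_S^{\bm b}$ induces an isomorphism $L(\mathcal{A}_{E(G)}^{\bm b_1})\cong L(\mathcal{A}_{E(G)}^{\bm b_2})$ as soon as $\Pi_{\bm b_1}=\Pi_{\bm b_2}$. This includes the case of sets $S$ with $H^{\bm b}_S$ equal to the whole space, which matters for $\mathcal{A}$ versus $\tilde{\mathcal{A}}$. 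The same reasoning applies to $\tilde{\mathcal{A}}$, whose flats are the same sets. So the theorem reduces to showing $\Pi_{\bm b_1}=\Pi_{\bm b_2}$ (respectively $\tilde\Pi_{\bm b_1}=\tilde\Pi_{\bm b_2}$). Once that holds, the equalities of characteristic polynomials and of $\varphi_G(\cdot,t)$ follow directly from \autoref{thm:nowhere-zero affine flow counting}(b) and \autoref{thm:nowhere-zero affine flow counting1}(b).

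To show $\Pi_{\bm b_1}=\Pi_{\bm b_2}$, I will prove that every $B_\pi$ with $\pi\in\Pi(G)$ is a flat of $\mathcal{A}_{\Pi(G)}$, or is all of $B(G,\mathbb{F})$. That is, $B_\pi=\bigcap\{B_\sigma\in\mathcal{A}_{\Pi(G)}: B_\pi\subseteq B_\sigma\}$. Granting this, the decomposition \eqref{DA}--\eqref{Flat} gives $\bm b\in B_\pi$ iff $X_{\bm b}\subseteq B_\pi$. So $\Pi_{\bm b}$ depends only on $X_{\bm b}$, and $\bm b_1,\bm b_2\in M(\mathcal{A}_{\Pi(G)}/X)$ means $X_{\bm b_1}=X_{\bm b_2}=X$.

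For the flat claim, take $\pi=\pi(S)$ and work componentwise, so assume $G$ is connected. For each block $V_i$ of $\pi$ consider sums over vertex sets. The conditions $\sum_{v\in Y}\bm x(v)=0$ for $Y=V_i$ are what cut out $B_\pi$. Now contract each block of $\pi$ to a vertex, giving a connected graph $G/\pi$. Every $\bm x\in B(G,\mathbb{F})$ has total sum $0$, so the equations for the blocks span the same space as the equations $\sum_{v\in Y}\bm x(v)=0$, where $Y$ ranges over the shores of bonds of $G/\pi$. Each such shore lifts to a shore $Y$ of a bond of $G$ with $G[Y]$ and $G[V\smallsetminus Y]$ connected. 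Its two-block partition $\sigma$ lies in $\Pi(G)$ with $|\sigma|=2$, and $B_\sigma\supseteq B_\pi$.

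To see that these bond shores suffice, use a spanning tree of $G/\pi$. For each tree edge, the fundamental cut is not necessarily a bond. Instead I will argue by induction, using the standard fact that the cut space of a connected graph is spanned by its bonds. Block indicator vectors then lie in the span of bond-shore indicators modulo the all-ones vector, which is enough since $\sum_V\bm x=0$ on $B(G,\mathbb{F})$. This spanning step is the main obstacle. I would justify it by noting that the vertex star of a block in $G/\pi$ decomposes into bonds, and that each bond's shore is $Y$ or its complement.

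For (a), I will repeat the argument restricted to $\tilde\Pi(G)$. Here $\pi\in\tilde\Pi(G)$ means cut-edges are contracted in $G/\pi$. So every bond of $G/\pi$ is a bond of $G$ avoiding cut-edges, and the relevant $\sigma$ lie in $\tilde\Pi(G)$, giving hyperplanes of $\mathcal{A}_{\tilde\Pi(G)}$. One caveat needs checking: the flats of $\tilde{\mathcal{A}}$ are indexed by sets $S$ whose $\pi(S^c)$ need not lie in $\tilde\Pi(G)$. Through \autoref{thm:nowhere-zero affine flow counting}, the relevant partitions reduce to $\tilde\Pi_{\bm b}$ after deleting $E^{\bm b}_{\rm cut}(G)$. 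I would verify that $E^{\bm b}_{\rm cut}(G)$ is itself determined by $\tilde\Pi_{\bm b}$, or at least by $X_{\bm b}$ via the hyperplanes $B_{\pi(G\setminus e)}$ for cut-edges $e$ together with their relation to flats of the reduced arrangement.
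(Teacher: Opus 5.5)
Your overall strategy matches the paper's. Your star decomposition in $G/\pi$ is exactly the paper's family of bonds $E[U,U_j]$ around a block $U$, and it gives $1_U=1_V-\sum_j 1_{U_j}$ directly, so the appeal to ``the cut space is spanned by bonds'' is unnecessary. From this, $\Pi_{\bm b}$ (resp.\ $\tilde{\Pi}_{\bm b}$) depends only on the stratum. The polynomial identities then follow from \autoref{thm:nowhere-zero affine flow counting}(b) and \autoref{thm:nowhere-zero affine flow counting1}(b), and the posets are compared via $H_S^{\bm b_1}\mapsto H_S^{\bm b_2}$.

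The genuine gap is in part (a). The polynomial identities there are fine, but your proof of $L(\tilde{\mathcal{A}}^{\bm b_1}_{E(G)})\cong L(\tilde{\mathcal{A}}^{\bm b_2}_{E(G)})$ hangs on a caveat whose proposed repair is false. $E^{\bm b}_{\rm cut}(G)$ is determined neither by $\tilde{\Pi}_{\bm b}$ nor by the stratum of $\mathcal{A}_{\tilde{\Pi}(G)}$. Take $G=K_2$ with edge $e$: $\mathcal{A}_{\tilde{\Pi}(G)}$ has no hyperplanes, so every $\bm b\in B(G,\mathbb{F})$ lies in one stratum with $\tilde{\Pi}_{\bm b}=\{\pi(E(G))\}$. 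Yet $E^{\bm 0}_{\rm cut}(G)=\{e\}$, while $E^{\bm b}_{\rm cut}(G)=\emptyset$ for $\bm b\neq\bm 0$.

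The missing observation is that $E^{\bm b}_{\rm cut}(G)$ never enters. By \autoref{prop:characterization of cosets}, $H_e^{\bm b}$ is empty or the whole space for every cut-edge $e$. Hence $\tilde{\mathcal{A}}^{\bm b}_{E(G)}=\{H_e^{\bm b}:e\in E_{\rm cyc}(G)\}$, and every flat is $H_S^{\bm b}$ with $S\subseteq E_{\rm cyc}(G)$. For such $S$, and for $S\cup T$ with $T\subseteq E_{\rm cyc}(G)$, we have $S^c\supseteq E_{\rm cut}(G)$, so $\pi(S^c)\in\tilde{\Pi}(G)$ automatically. Nonemptiness and containment of these flats are therefore governed by $\tilde{\Pi}_{\bm b}$ alone. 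The map $H_S^{\bm b_1}\mapsto H_S^{\bm b_2}$ for $S\subseteq E_{\rm cyc}(G)$ is then the required isomorphism; your premise that the relevant $\pi(S^c)$ may fall outside $\tilde{\Pi}(G)$ is simply not true.

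Your containment test also drops a condition. For $H_S^{\bm b},H_T^{\bm b}\neq\emptyset$, one has $H_S^{\bm b}\subseteq H_T^{\bm b}$ if and only if $H_{S\cup T}^{\bm b}\neq\emptyset$ \emph{and} $n(S^c)=n((S\cup T)^c)$; equal dimension alone is not enough. In the digon with parallel edges $e_1,e_2$, take $S=\{e_1\}$ and $T=\{e_2\}$. Both flats are points and $n(S^c)=n(\emptyset)=0$, but $H_S^{\bm b}=H_T^{\bm b}$ only when $\bm b=\bm 0$.

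The extra condition is $\pi((S\cup T)^c)\in\Pi_{\bm b}$. So the test is not ``independent of $\bm b$'', but it is still a function of $\Pi_{\bm b}$ (resp.\ $\tilde{\Pi}_{\bm b}$). This is exactly the nonemptiness transfer $H^{\bm b_1}_{S_1\cup S_2}\neq\emptyset\Rightarrow H^{\bm b_2}_{S_1\cup S_2}\neq\emptyset$ used in the paper's proof. With both fixes, your argument coincides with the paper's.
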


It is well known that the nonzero coefficients of the characteristic polynomial $\chi(\mathcal{A},t)$ are nonzero and alternate in sign, unless $\chi(\mathcal{A},t)\equiv0$. According to \autoref{thm:nowhere-zero affine flow counting} and \autoref{thm:nowhere-zero affine flow counting1}, the polynomials $\varphi_G(\tilde{\Pi}_{\bm b},t)$ and  $\varphi_G(\Pi_{\bm b},t)$ can be written respectively as:
\[
\varphi_G(\tilde{\Pi}_{\bm b},t)=\sum_{i=0}^{n(G)}(-1)^iw_i(\tilde{\Pi}_{\bm b})t^{n(G)-i}\quad\And\quad \varphi_G(\Pi_{\bm b},t)=\sum_{i=0}^{n(G)}(-1)^iw_i(\Pi_{\bm b})t^{n(G)-i},
\]
with all coefficients $w_i(\tilde{\Pi}_{\bm b})$ and $w_i(\Pi_{\bm b})$ being nonnegative.

As our fourth main result, we establish unified comparison relations for the unsigned coefficients of affine flow polynomials, based on the intersection posets $L(\mathcal{A}_{\tilde{\Pi}(G)})$ and $L(\mathcal{A}_{\Pi(G)})$.
\begin{theorem}[Comparison of Coefficients]\label{thm:comparison}
Let $\bm b_1, \bm b_2\in B(G,\mathbb{F})$. 
\begin{itemize} 
\item[{\rm (a)}] If $\bm b_1\in M(\mathcal{A}_{\tilde{\Pi}(G)}/\tilde{X}_1)$ and  $\bm b_2\in M(\mathcal{A}_{\tilde{\Pi}(G)}/\tilde{X}_2)$ with $\tilde{X}_1\subseteq\tilde{X}_2$ in $L(\mathcal{A}_{\tilde{\Pi}(G)})$, then 
\[
w_i(\tilde{\Pi}_{\bm b_1})\le w_i(\tilde{\Pi}_{\bm b_2}) \quad\text{ for } i=0,1,\ldots,n(G).
\] 
\item[{\rm (b)}]If $\bm b_1\in M(\mathcal{A}_{\Pi(G)}/X_1)$ and  $\bm b_2\in M(\mathcal{A}_{\Pi(G)}/X_2)$ with $X_1\subseteq X_2$ in $L(\mathcal{A}_{\Pi(G)})$, then 
\[
w_i(\Pi_{\bm b_1})\le w_i(\Pi_{\bm b_2}) \quad\text{ for } i=0,1,\ldots,n(G).
\] 
\end{itemize}
\end{theorem}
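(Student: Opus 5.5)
The plan is an induction on $|E(G)|$ driven by the deletion--contraction recurrence \eqref{Partition-Flow-DC} of \autoref{Partition-CTFT-DC}. I will first replace the geometric hypothesis by a combinatorial one, namely $\Pi_{\bm b_1}\supseteq\Pi_{\bm b_2}$ (respectively $\tilde\Pi_{\bm b_1}\supseteq\tilde\Pi_{\bm b_2}$), and then prove the coefficient inequality under this weaker hypothesis.

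\emph{Step 1: reduction to $\Pi_{\bm b_1}\supseteq\Pi_{\bm b_2}$.} I will show that every $B_\pi$ with $\pi\in\Pi(G)$ is an intersection of hyperplanes of $\mathcal{A}_{\Pi(G)}$. The candidate hyperplanes are the $B_{\pi'}$ for the coarsenings $\pi'\ge\pi$ in $\Pi(G)$ with $|\pi'|=|\pi(E(G))|+1$, where each $\pi'$ is obtained by merging all blocks of $\pi$ within a component except those on one side of a bond. The summation conditions defining such $\pi'$ should span the conditions defining $B_\pi$: inside a component, a block sum can be written as a difference of sums over connected unions of blocks whose complements are connected. So $B_\pi$ is a flat of $\mathcal{A}_{\Pi(G)}$ (or the whole space).

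Now take $\bm b_1\in M(\mathcal{A}_{\Pi(G)}/X_1)$ with $X_1\subseteq X_2$. By \eqref{Flat}, every hyperplane containing $\bm b_2$ contains $X_2\supseteq X_1$, hence contains $\bm b_1$. Since $B_\pi$ is an intersection of such hyperplanes, $\bm b_2\in B_\pi$ implies $\bm b_1\in B_\pi$, that is, $\Pi_{\bm b_2}\subseteq\Pi_{\bm b_1}$. The same argument with $\tilde\Pi(G)$ gives the reduced version; there I will check that the relevant $B_\pi$ are cut out by hyperplanes indexed by partitions in $\tilde\Pi(G)$.

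\emph{Step 2: recursion for the unsigned coefficients.} Write $w_i(G,\bm b)$ for $w_i(\Pi_{\bm b})$. For $e$ a non-loop cycle-edge, $n(G'')=n(G)$ and $n(G')=n(G)-1$. Comparing coefficients of $t^{n(G)-i}$ in $\varphi_G=\varphi_{G''}-\varphi_{G'}$ then gives
\[
w_i(G,\bm b)=w_i(G'',\bm b'')+w_{i-1}(G',\bm b),
\]
and for a loop, $(t-1)\varphi_{G'}$ gives $w_i(G)=w_i(G')+w_{i-1}(G')$. I must verify that the selections $(\Pi_{\bm b})'$ and $(\Pi_{\bm b})''$ from \eqref{Triple-Partitions} are exactly $\Pi_{\bm b}(G')$ and $\Pi_{\bm b''}(G'')$. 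Here $\bm b''$ agrees with $\bm b$ except at the merged vertex, where it takes the value $\bm b(u)+\bm b(v)$. This is immediate since block sums are unchanged. Consequently $\Pi_{\bm b_1}\supseteq\Pi_{\bm b_2}$ passes to both minors. Nonnegativity of all the $w_i$ involved follows from \autoref{thm:nowhere-zero affine flow counting1}.

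\emph{Step 3: induction and the cut-edge case.} Pick any edge $e$. If $e$ is a loop or a cycle-edge, the recursion and the inductive hypothesis give the inequality termwise. If every edge is a cut-edge, I distinguish by whether $E_{\rm cut}^{\bm b}(G)=\emptyset$. By \autoref{thm:nowhere-zero affine flow counting1}(a), $\varphi_G(\Pi_{\bm b},t)\equiv0$ unless no cut-edge $e$ has $\bm b\in B(G\setminus e,\mathbb{F})=B_{\pi(E\setminus e)}$. Since $\pi(E\setminus e)\in\Pi_{\bm b_2}$ forces $\pi(E\setminus e)\in\Pi_{\bm b_1}$, the set $E_{\rm cut}^{\bm b_2}$ is contained in $E_{\rm cut}^{\bm b_1}$. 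So either $\bm b_1$ gives the zero polynomial, and the inequality is trivial, or neither $\bm b_1$ nor $\bm b_2$ has such a cut-edge. In the latter case $\bm b\notin B(G')$, and only the contraction term survives: $\varphi_G(\Pi_{\bm b})=\varphi_{G''}(\Pi_{\bm b''})$, to which induction applies. The base case is the edgeless graph, where both polynomials equal $1$ (as $\bm b\in B(G)$ forces $\bm b=\bm 0$).

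For part (a) I run the same induction with $\tilde\Pi$, or reduce it to part (b) applied to $G\setminus E_{\rm cut}^{\bm b}(G)$ using \autoref{thm:nowhere-zero affine flow counting}(b). The main obstacle is Step 1 in the reduced setting, together with the bookkeeping for cut-edges: $E_{\rm cut}^{\bm b_1}$ and $E_{\rm cut}^{\bm b_2}$ may differ, so the graphs $G\setminus E_{\rm cut}^{\bm b_i}$ differ. I would therefore work directly with $\tilde\Pi_{\bm b}$ and the recursion \eqref{Partition-Flow-DC}, always contracting cut-edges (whose ends lie in a common block of every member of $\tilde\Pi(G)$) first.
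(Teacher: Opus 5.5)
Your proof is correct, and it takes a genuinely different route from the paper's. The paper argues through the affine NBC theorem (\autoref{affine-NBC}). By \autoref{Independent}, the affinely independent edge sets of $\tilde{\mathcal{A}}_{E(G)}^{\bm b}$ do not depend on $\bm b$. An affine circuit $C_0$ for $\bm b_2$ remains one for $\bm b_1$, because $\bm b_2\in B_{\pi(C_0^c)}$ forces $\bm b_1\in B_{\pi(C_0^c)}$; this is precisely your Step~1 inclusion. Hence ${\rm NBC}(\tilde{\mathcal{A}}_{E(G)}^{\bm b_1})\subseteq{\rm NBC}(\tilde{\mathcal{A}}_{E(G)}^{\bm b_2})$. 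That route gives an explicit injection between the sets counted by $w_i$ and handles (a) and (b) in one stroke, but it imports Orlik--Terao.

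Your induction stays entirely inside the partition-selection machinery (\autoref{Partition-CTFT-DC} and \autoref{lem:deletion-contraction for selection}). Its engine is that for loops and non-loop cycle-edges the coefficient recursion $w_i(G)=w_i(G'')+w_{i-1}(G')$ has only plus signs. So monotonicity passes up from the minors with no positivity input at all. The minus sign of the bridge case is neutralized in forests by the $0/1$ dichotomy of \autoref{thm:nowhere-zero affine flow counting1}(a), together with $E_{\rm cut}^{\bm b_2}(G)\subseteq E_{\rm cut}^{\bm b_1}(G)$.

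The price is the bridge bookkeeping. For (a) there is a further issue: $\tilde\Pi$-selections are not closed under deletion (cf.\ the paper's $C_3$ example), so a literal ``same induction with $\tilde\Pi$'' would not go through. Your fallback of contracting cut-edges first does work. If $e$ is a bridge, every member of $\tilde\Pi(G)$ has both ends of $e$ in one block, whereas no member of $\Pi(G\setminus e)$ does. Hence $\tilde\Pi_{\bm b}\cap\Pi(G\setminus e)=\emptyset$ and $\varphi_G(\tilde\Pi_{\bm b},t)=\varphi_{G/e}(\tilde\Pi_{\bm b''},t)$, computed in $G/e$ using $E_{\rm cut}(G/e)=E_{\rm cut}(G)\smallsetminus\{e\}$. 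Iterating reduces (a) to your combinatorial form of (b) on the bridgeless graph $G/E_{\rm cut}(G)$, which has the same nullity and satisfies $\tilde\Pi=\Pi$.

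The reduced Step~1 check is also routine. For $\pi\in\tilde\Pi(G)$, a bond $E[U,U_j]$ contains no bridge: by minimality it would then equal that bridge, whose ends would lie in different blocks of $\pi$. So each $\pi_{U,j}$ already lies in $\tilde\Pi(G)$.
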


\autoref{Classification} says that for any $\tilde{X}\in L(\mathcal{A}_{\tilde{\Pi}(G)})$, $\varphi_G(\tilde{\Pi}_{\bm b},t)$ are the same polynomial for all $\bm b\in M(\mathcal{A}_{\tilde{\Pi}(G)}/\tilde{X})$, denoted by $\tilde{\varphi}_G(\tilde{X},t)$; and for any $X\in L(\mathcal{A}_{\Pi(G)})$, $\varphi_G(\Pi_{\bm b},t)$ are also the same polynomial for all $\bm b\in M(\mathcal{A}_{\Pi(G)}/X)$, denoted by  $\varphi_G(X,t)$. With the above notations, our final main result provides two decomposition formulas for nowhere-zero affine flows associated with the boundary arrangement and the reduced boundary arrangement, respectively. 
\begin{theorem}[Decomposition Formulas]\label{Dec1} Let $\mathbb{F}$ be a field. 
\begin{itemize}
\item[{\rm (a)}]
If $\mathbb{F}$ is an infinite field, then
\[
(t-1)^{|E(G)|}=\sum_{X\in L(\mathcal{A}_{\Pi(G)})}\varphi_G(X,t)\,\chi(\mathcal{A}_{\Pi(G)}/X,t).
\]
If $\mathbb{F}$ is a finite field of $q$ elements, then
\[
(q-1)^{|E(G)|}=\sum_{X\in L(\mathcal{A}_{\Pi(G)})}\varphi_G(X,q)\,\chi(\mathcal{A}_{\Pi(G)}/X,q).
\]
\item[{\rm (b)}]
If $\mathbb{F}$ is an infinite field, then
\[
t^{|E_{\rm cut}(G)|}(t-1)^{|E_{\rm cyc}(G)|}=\sum_{\tilde{X}\in L(\mathcal{A}_{\tilde{\Pi}(G)})}\tilde{\varphi}_G(\tilde{X},t)\,\chi(\mathcal{A}_{\tilde{\Pi}(G)}/\tilde{X},t).
\]
If $\mathbb{F}$ is a finite field of $q$ elements, then
\[
q^{|E_{\rm cut}(G)|}(q-1)^{|E_{\rm cyc}(G)|}=\sum_{\tilde{X}\in L(\mathcal{A}_{\tilde{\Pi}(G)})}\tilde{\varphi}_G(\tilde{X},q)\,\chi(\mathcal{A}_{\tilde{\Pi}(G)}/\tilde{X},q).
\]
\end{itemize}
\end{theorem}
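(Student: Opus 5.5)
The plan is to prove the finite-field identities first, by counting, and then pass to polynomial identities.

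\emph{Finite-field counts.} Take $\mathbb{F}=\mathbb{F}_q$ and partition the set of nowhere-zero edge vectors $\bm c\in(\mathbb{F}_q^{\times})^{E(G)}$ according to their boundary $\bm b=\partial\bm c$. The boundary map is surjective onto $B(G,\mathbb{F}_q)$. For each $\bm b$, the fibre is $F_{\rm nz}(G,\bm b;\mathbb{F}_q)$, and by \autoref{thm:nowhere-zero affine flow counting1}(c) it has size $\varphi_G(\Pi_{\bm b},q)$. Next, split $B(G,\mathbb{F}_q)$ by the decomposition \eqref{DA} applied to $\mathcal{A}_{\Pi(G)}$:
\[
B(G,\mathbb{F}_q)=\bigsqcup_{X\in L(\mathcal{A}_{\Pi(G)})}M(\mathcal{A}_{\Pi(G)}/X).
\]
By \autoref{Classification}(b), $\varphi_G(\Pi_{\bm b},q)=\varphi_G(X,q)$ is constant on each piece $M(\mathcal{A}_{\Pi(G)}/X)$. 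Each piece has exactly $\chi(\mathcal{A}_{\Pi(G)}/X,q)$ points, by the standard finite-field method for the restricted arrangement in $X$. Summing over $X$ gives the finite-field identity in part (a).

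For part (b) I use the same argument with all of $(\mathbb{F}_q)^{E(G)}$ restricted so that only edges of $E_{\rm cyc}(G)$ must be nonzero. That is, I count vectors $\bm c$ with $\bm c(e)\neq0$ for every $e\in E_{\rm cyc}(G)$ and $\bm c$ arbitrary on cut-edges; there are $q^{|E_{\rm cut}(G)|}(q-1)^{|E_{\rm cyc}(G)|}$ of them. The key claim is that, for fixed $\bm b$, the number of such $\bm c$ with $\partial\bm c=\bm b$ equals $|F_{\rm nz}(G\setminus E_{\rm cut}^{\bm b}(G),\bm b;\mathbb{F}_q)|=\varphi_G(\tilde\Pi_{\bm b},q)$, which is \autoref{thm:nowhere-zero affine flow counting}(c). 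To prove the claim I use that the value of $\bm c$ on a cut-edge $e$ is forced by $\bm b$: it equals the sum of $\bm b$ over one side of the bond $\{e\}$. This value is zero exactly when $\bm b\in B(G\setminus e,\mathbb{F}_q)$, i.e.\ when $e\in E_{\rm cut}^{\bm b}(G)$. Consequently:
\begin{itemize}
\item every cut-edge outside $E_{\rm cut}^{\bm b}(G)$ automatically carries a nonzero value;
\item the edges in $E_{\rm cut}^{\bm b}(G)$ are forced to $0$ and can be deleted;
\item the remaining cut-edges of $G\setminus E_{\rm cut}^{\bm b}(G)$ are exactly those with nonzero forced values.
\end{itemize}
So the restricted count coincides with the nowhere-zero count on $G\setminus E_{\rm cut}^{\bm b}(G)$. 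I then decompose $B(G,\mathbb{F}_q)$ by $L(\mathcal{A}_{\tilde\Pi(G)})$ and apply \autoref{Classification}(a), exactly as in part (a).

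\emph{Infinite fields.} Here I reduce to the finite case. The intersection posets $L(\mathcal{A}_{\Pi(G)})$ and $L(\mathcal{A}_{\tilde\Pi(G)})$ are determined by which partitions have coinciding subspaces $B_\pi$. Because each $B_\pi$ is cut out by $0/1$ equations on blocks, I expect these posets to be the same for every field, of any characteristic. Indeed, $\bigcap B_{\pi_j}$ should equal $B_{\pi}$, where $\pi$ is the common refinement restricted to connected blocks (the meet structure coming from bonds), and this description is field-independent. If so, both sides of the identity are polynomials in $t$ that are independent of the field. They agree at infinitely many prime powers $q$, taken from fields of varying characteristic (or all powers of one prime), and hence they are equal as polynomials.

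\emph{Main obstacle.} The step I expect to be hardest is this field-independence. I need that the combinatorial type of $L(\mathcal{A}_{\Pi(G)})$, together with the restricted characteristic polynomials and the polynomials $\varphi_G(X,t)$, depends only on $G$ and not on $\mathbb{F}$. My first attempt will be to identify flats with the sets $\{\pi:B_\pi\supseteq X\}$, using the field-free description of $\Pi_{\bm b}$. Failing that, I will fall back on a Zariski-density argument, working over $\mathbb{Q}$ and reducing modulo large primes.
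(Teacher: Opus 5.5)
Your finite-field argument is correct, and it is the same as the paper's, which handles finite fields by reading its Fubini computation with counting measures. You fibre the edge vectors by $\partial\bm c$, count each fibre with part (c) of \autoref{thm:nowhere-zero affine flow counting1} (resp.\ \autoref{thm:nowhere-zero affine flow counting}, whose content your forced-value discussion re-derives), get constancy on each stratum from \autoref{Classification}, and use $|M(\mathcal{A}/X)|=\chi(\mathcal{A}/X,q)$.

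The infinite-field step, however, rests on a false claim. $L(\mathcal{A}_{\Pi(G)})$ is not independent of the field, and intersections of the $B_\pi$ are not $B_\pi$ of a common refinement. Take $G=K_4$; it has no cut-edges, so the same applies to $\mathcal{A}_{\tilde\Pi(K_4)}$.
\begin{itemize}
\item $B_{\{\{v_1,v_2\},\{v_3,v_4\}\}}\cap B_{\{\{v_1,v_3\},\{v_2,v_4\}\}}$ is the line spanned by $(1,-1,-1,1)$. The common refinement is the partition into singletons, whose $B_\pi$ is $\{\bm 0\}$.
\item The third hyperplane $B_{\{\{v_1,v_4\},\{v_2,v_3\}\}}$ contains this line in characteristic $2$, where it is $\mathbb{F}\bm 1$ (the paper's example). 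In every other characteristic it cuts the line down to $\{\bm 0\}$.
\item Hence $\mathcal{A}_{\Pi(K_4)}$ is the Fano arrangement, with $\chi=(t-1)(t-2)(t-4)$, in characteristic $2$, and the non-Fano arrangement, with $\chi=(t-1)(t-3)^2$, otherwise.
\end{itemize}
So both the index set and the individual terms of the right-hand side depend on the characteristic. Values at $q$ taken from fields of varying characteristic therefore say nothing about the polynomial attached to a given infinite field.

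What is true is weaker. Every hyperplane is cut out by $0/1$ equations over the prime field. So $L(\mathcal{A}_{\Pi(G)})$, the polynomials $\chi(\mathcal{A}_{\Pi(G)}/X,t)$, and the map $X\mapsto\{\pi\in\Pi(G):X\subseteq B_\pi\}$ are unchanged under field extension. That map determines $\varphi_G(X,t)$, because each $B_\pi$ is a flat. Consequently:
\begin{itemize}
\item an infinite field of characteristic $p$ must be compared with all $\mathbb{F}_{p^k}$;
\item a field of characteristic $0$ must be compared with $\mathbb{Q}$, and then with $\mathbb{F}_p$ for all sufficiently large primes $p$ (finitely many rank conditions on integer matrices).
\end{itemize}
Your ``fallback'' therefore has to become the actual argument and be carried out, including matching flats and the polynomials $\varphi_G(X,t)$ across fields.

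The paper avoids any reduction. It works directly over the infinite field with Ehrenborg--Readdy's characteristic valuation and their Fubini-type proposition, applied to the graph of $\partial$.

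There is also a characteristic-free shortcut that gives both the polynomial and the finite-field identities at once.
\begin{itemize}
\item Since $\bm b\in B_{\pi(S^c)}$ if and only if $X_{\bm b}\subseteq B_{\pi(S^c)}$, one has $\varphi_G(X,t)=\sum_{S:\,X\subseteq B_{\pi(S^c)}}(-1)^{|S|}t^{n(S^c)}$.
\item Interchange the sums and apply the M\"obius identity $\sum_{X\subseteq Y}\chi(\mathcal{A}/X,t)=t^{\dim Y}$ to the flat $Y=B_{\pi(S^c)}$, whose dimension is $r(S^c)$.
\item The right-hand side becomes $\sum_{S\subseteq E(G)}(-1)^{|S|}t^{|S^c|}=(t-1)^{|E(G)|}$.
\item Restricting to $S\subseteq E_{\rm cyc}(G)$, for which $B_{\pi(S^c)}\in L(\mathcal{A}_{\tilde\Pi(G)})$, gives part (b) the same way.
\end{itemize}
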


\section{Proof of \autoref{Partition-CTFT-DC}}\label{Sec-2}
This section focuses on investigating partition-selected graph polynomials. We first derive the specialization relations for partition-selected graph polynomials from their definitions.
\begin{theorem}\label{Partition-Relations-CTF-Tutte}
Let $\mathfrak{p}\subseteq\Pi(G)$. The partition-selected chromatic, tension, flow polynomials are respectively related to the partition-selected Tutte polynomial by the following relations:
\begin{align}
&\chi_G(\mathfrak{p},t)=(-1)^{r(G)}t^{c(G)}T_G(\mathfrak{p};1-t,0),\label{Partition-Chromatic-Tutte-Relation}\\
&\tau_G(\mathfrak{p},t)=(-1)^{r(G)}T_G(\mathfrak{p};1-t,0),\label{Partition-Tension-Tutte-Relation}\\
&\varphi_G(\mathfrak{p},t)=(-1)^{n(G)}T_G(\mathfrak{p};0,1-t)\label{Partition-Flow-Tutte-Relation}.
\end{align}
\end{theorem}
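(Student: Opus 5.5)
The plan is to verify each identity by direct substitution into the definition \eqref{Partition-Tutte-Def}. Since the summation index set $\{S\subseteq E(G):\pi(S)\in\mathfrak{p}\}$ is the same in every definition, it suffices to compare the summands term by term. By \eqref{Partition-Rank-Def},
\[
T_G(\mathfrak{p};x,y)=\sum_{S\subseteq E(G),\,\pi(S)\in\mathfrak{p}}(x-1)^{r(G)-r(S)}(y-1)^{n(S)}.
\]
The empty selection gives $0$ on both sides of every identity, so we may assume $\mathfrak{p}\neq\emptyset$.

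For \eqref{Partition-Tension-Tutte-Relation}, we substitute $x=1-t$ and $y=0$. The summand becomes $(-t)^{r(G)-r(S)}(-1)^{n(S)}$, which equals $(-1)^{r(G)-r(S)+n(S)}t^{r(G)-r(S)}$. Using $n(S)=|S|-r(S)$, the sign exponent is $r(G)+|S|-2r(S)\equiv r(G)+|S|\pmod 2$. Multiplying by $(-1)^{r(G)}$ therefore yields exactly $(-1)^{|S|}t^{r(G)-r(S)}$, which is the summand of \eqref{Partition-Tension-Def}.

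For \eqref{Partition-Chromatic-Tutte-Relation}, it suffices to show $\chi_G(\mathfrak{p},t)=t^{c(G)}\tau_G(\mathfrak{p},t)$ and then combine it with the tension identity. This holds termwise because
\[
c(G)+r(G)-r(S)=c(G)+(|V|-c(G))-(|V|-c(S))=c(S),
\]
so $t^{c(G)}t^{r(G)-r(S)}=t^{c(S)}$.

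For \eqref{Partition-Flow-Tutte-Relation}, we substitute $x=0$ and $y=1-t$. The summand becomes $(-1)^{r(G)-r(S)}(-t)^{n(S)}=(-1)^{r(G)-r(S)+n(S)}t^{n(S)}$. We need this to equal $(-1)^{n(G)}(-1)^{|S^c|}t^{n(S)}$, i.e.\ we must check
\[
r(G)-r(S)+n(S)\equiv n(G)+|E|-|S|\pmod 2.
\]
The left side is $r(G)+|S|-2r(S)\equiv r(G)+|S|$. The right side is $|E|-r(G)+|E|-|S|\equiv r(G)+|S|$. These agree.

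One convention needs care: the factor $0^{r(G)-r(S)}$ arising at $x=0$ (and $0^{n(S)}$ at $y=0$). We must interpret these as polynomial identities in $t$ with the substitution done formally before evaluating $0^0=1$. Since the exponents are nonnegative, substituting into the polynomial expressions is legitimate and no genuine obstacle arises. The only real work is the parity bookkeeping above, so I expect no step to be a serious obstacle.
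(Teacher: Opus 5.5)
Your termwise substitution into $T_G(\mathfrak{p};x,y)=R_G(\mathfrak{p};x-1,y-1)$, together with the parity bookkeeping and $r(G)-r(S)=c(S)-c(G)$, is correct and is exactly the route the paper has in mind, since it says the relations are derived directly from the definitions. Your closing worry about $0^0$ is unfounded: setting $x=0$ or $y=0$ evaluates $R_G$ at $-1$, so the factors are $(-1)^{k}$ rather than $0^{k}$, as your own computations already show.
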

Let $\mathcal{E}(G,\mathfrak{p})$ denote the class of edge sets $S\subseteq E(G)$ with $\pi(S)\in\mathfrak{p}$; equivalently, each sum in \eqref{Partition-Chromatic-Def}, \eqref{Partition-Tension-Def}, \eqref{Partition-Flow-Def}, and \eqref{Partition-Rank-Def} is taken over this class. Associated with an edge $e$ of $G$, the class $\mathcal{E}(G,\mathfrak{p})$ is partitioned into two subclasses according to whether $e$ is present or absent:
\begin{equation}\label{Edge-Partition}
\mathcal{E}_e(G,\mathfrak{p})=\big\{S\in\mathcal{E}(G,\mathfrak{p}):e\in S\big\}\quad\And\quad \mathcal{E}_{\bar{e}}(G,\mathfrak{p})=\big\{S\in\mathcal{E}(G,\mathfrak{p}):e\notin S\big\}.
\end{equation}

We present a deletion-contraction formula for partition-selected rank generating function.
\begin{theorem}\label{Partition-Rank-DC}
Let $\mathfrak{p}\subseteq\Pi(G)$ and $e\in E(G)$. The partition-selected rank generating function $R_G(\mathfrak{p},x,y)$ satisfies
\[
R_G(\mathfrak{p};x,y)=\begin{cases}
(y+1)R_{G'}(\mathfrak{p}';x,y),&\text{ if } e \text{ is a loop};\\
xR_{G'}(\mathfrak{p}';x,y)+R_{G^{''}}(\mathfrak{p}^{''};x,y),&\text{ if } e \text{ is a cut-edge};\\
R_{G'}(\mathfrak{p}';x,y)+R_{G^{''}}(\mathfrak{p}^{''};x,y),&\text{ otherwise}.
\end{cases}
\]
\end{theorem}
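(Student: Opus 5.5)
The proof will split the sum defining $R_G(\mathfrak{p};x,y)$ over $\mathcal{E}(G,\mathfrak{p})=\mathcal{E}_{\bar e}(G,\mathfrak{p})\sqcup\mathcal{E}_e(G,\mathfrak{p})$ as in \eqref{Edge-Partition}, and identify each piece with a sum for $G'$ or $G''$. The two basic bijections are these. First, $S\mapsto S$ is a bijection from $\mathcal{E}_{\bar e}(G,\mathfrak{p})$ onto the edge sets $S\subseteq E(G')$ with $\pi(S)\in\mathfrak{p}$. Such a partition is automatically in $\Pi(G')$, since it is induced by a subgraph of $G'$, so this set is exactly $\mathcal{E}(G',\mathfrak{p}')$; note also that $\pi_G(S)=\pi_{G'}(S)$. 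Second, when $e$ is not a loop, $S\mapsto S\smallsetminus e$ is a bijection from $\mathcal{E}_e(G,\mathfrak{p})$ onto $\mathcal{E}(G'',\mathfrak{p}'')$. Here $\pi(S)\in\Pi_e(G)$ because $e\in S$, and $\pi_{G''}(S\smallsetminus e)=\pi_G(S)/e$. We also need that the map $\pi\mapsto\pi/e$ is injective on $\Pi_e(G)$, so that $\pi(S)/e\in\mathfrak{p}''$ forces $\pi(S)\in\mathfrak{p}$. This holds because $\pi$ is recovered from $\pi/e$ by splitting the merged vertex back into $u,v$, which lie in the same block.

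Next I track the exponents. For $e\notin S$ we have $r_{G'}(S)=r_G(S)$ and $n_{G'}(S)=n_G(S)$. Also $r(G')=r(G)$ unless $e$ is a cut-edge, in which case $r(G')=r(G)-1$. For $e\in S$ non-loop, contraction lowers $|V|$ by one and keeps $c$, so $r_{G''}(S\smallsetminus e)=r_G(S)-1$ and $r(G'')=r(G)-1$. Hence $r(G'')-r_{G''}(S\smallsetminus e)=r(G)-r_G(S)$, and $n_{G''}(S\smallsetminus e)=(|S|-1)-(r_G(S)-1)=n_G(S)$. So the $e\in S$ part equals $R_{G''}(\mathfrak{p}'';x,y)$. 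The $e\notin S$ part equals $R_{G'}(\mathfrak{p}';x,y)$ when $e$ is not a cut-edge, since then $x^{r(G)-r(S)}=x^{r(G')-r(S)}$, and equals $xR_{G'}(\mathfrak{p}';x,y)$ when $e$ is a cut-edge. This settles the second and third cases.

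For a loop $e$, adding or removing $e$ does not change $\pi(S)$ or $r(S)$, and it raises $n$ by one. So $S\mapsto S\smallsetminus e$ pairs both subclasses with $\mathcal{E}(G',\mathfrak{p}')$, and since $\Pi(G')=\Pi(G)$ we have $\mathfrak{p}'=\mathfrak{p}$. Together with $r(G')=r(G)$, the sum becomes $(1+y)R_{G'}(\mathfrak{p}';x,y)$. Empty selections are consistent with the zero convention throughout.

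The main obstacle is the careful justification that $\mathfrak{p}''$ membership corresponds exactly to $\pi(S)\in\mathfrak{p}$, that is, the injectivity of $\pi\mapsto\pi/e$ on $\Pi_e(G)$ together with the identity $\pi(S\smallsetminus e)=\pi(S)/e$ in $G''$. This needs only a short argument on components, because contracting an edge of $S$ merges exactly the two ends inside one component of $G|S$.
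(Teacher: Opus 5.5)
Your proposal is correct and follows essentially the same route as the paper. Both split the sum according to whether $e\in S$, identify the two classes with $\mathcal{E}(G',\mathfrak{p}')$ and $\mathcal{E}(G'',\mathfrak{p}'')$, and track the rank and nullity exponents separately in the loop, cut-edge, and remaining cases. Your explicit check that $\pi\mapsto\pi/e$ is injective on $\Pi_e(G)$, so that $\pi_{G''}(S\smallsetminus e)\in\mathfrak{p}''$ forces $\pi_G(S)\in\mathfrak{p}$, fills in a step the paper only asserts.
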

\begin{proof}
According to \eqref{Edge-Partition},  $R_G(\mathfrak{p};x,y)$ can be written as the sum of two parts:
\begin{equation}\label{Partition-Rank-Two}
R_G(\mathfrak{p};x,y)=\sum_{S\in \mathcal{E}_e(G,\mathfrak{p})}x^{r(G)-r(S)}y^{n(S)}+\sum_{S\in \mathcal{E}_{\bar{e}}(G,\mathfrak{p})}x^{r(G)-r(S)}y^{n(S)}.
\end{equation}
Clearly, we have the simple relation
\begin{equation*}\label{Partition-e}
\mathcal{E}_{\bar{e}}(G,\mathfrak{p})=\mathcal{E}(G',\mathfrak{p}').
\end{equation*}
In addition, the class $\mathcal{E}_e(G,\mathfrak{p})$ can be identified with the class $\mathcal{E}(G^{''},\mathfrak{p}^{''})$ by identifying $S$ members of the former with $S/e$ members of the latter, where $S/e$ denotes the edge set of the graph $(G|S)/e$. In this context, each member $S$ in $\mathcal{E}(G^{''},\mathfrak{p}^{''})$ corresponds bijectively to a unique member $S\sqcup e$ in $\mathcal{E}_e(G,\mathfrak{p})$.

Write $r'$ and $n'$ for the rank and nullity functions in $G'$, and $r^{''}$ and $n^{''}$ for the rank and nullity functions in $G^{''}$. If $e$ is a loop, then $G'=G^{''}$ and $\mathfrak{p}'=\mathfrak{p}^{''}=\mathfrak{p}$. For any $S\subseteq E(G)\setminus e$, 
\begin{equation*}\label{Rank-Nullity-Relation1}
r(G)=r'(G'),\quad r(S\sqcup e)=r(S)=r'(S)\quad\And\quad n(S\sqcup e)=n(S)+1=n'(S)+1.
\end{equation*}
It follows that the sum over $\mathcal{E}_e(G,\mathfrak{p})$ equals $yR_{G'}(\mathfrak{p}';x,y)$, and the sum over $\mathcal{E}_{\bar{e}}(G,\mathfrak{p})$ equals $R_{G'}(\mathfrak{p}';x,y)$ as stated in \eqref{Partition-Rank-Two}. Thus
\[
R_G(\mathfrak{p};x,y)=(y+1)R_{G'}(\mathfrak{p}';x,y).
\]

If $e$ is a cut-edge, then  for any $S\subseteq E(G)\setminus e$, we have
\[
r(G)=r'(G')+1,\quad r(S)=r'(S),\quad n(S)=n'(S)
\]
and
\[
r(G)=r^{''}(G^{''})+1,\quad r(S\sqcup e)=r^{''}(S)+1,\quad n(S\sqcup e)=n^{''}(S).
\]
Then we deduce that the sum over $\mathcal{E}_e(G,\mathfrak{p})$ is $R_{G^{''}}(\mathfrak{p}^{''};x,y)$, and the sum over $\mathcal{E}_{\bar{e}}(G,\mathfrak{p})$ is $xR_{G'}(\mathfrak{p}';x,y)$ in \eqref{Partition-Rank-Two}. Hence, we obtain
\[
R_G(\mathfrak{p};x,y)=xR_{G'}(\mathfrak{p}';x,y)+R_{G^{''}}(\mathfrak{p}^{''};x,y).
\]

If $e$ is neither a loop nor a cut-edge, for any $S\subseteq E(G)\setminus e$, we have
\[
r(G)=r'(G'),\quad r(S)=r'(S),\quad n(S)=n'(S)
\]
and
\[
r(G)=r^{''}(G^{''})+1,\quad r(S\sqcup e)=r^{''}(S)+1,\quad n(S\sqcup e)=n^{''}(S).
\]
It follows that the sum over $\mathcal{E}_e(G,\mathfrak{p})$ is $R_{G^{''}}(\mathfrak{p}^{''};x,y)$, and the sum over $\mathcal{E}_{\bar{e}}(G,\mathfrak{p})$ is $R_{G'}(\mathfrak{p}';x,y)$ in \eqref{Partition-Rank-Two}. Therefore, we arrive at
\[
R_G(\mathfrak{p};x,y)=R_{G'}(\mathfrak{p}';x,y)+R_{G^{''}}(\mathfrak{p}^{''};x,y)
\]
in this case, which completes the proof.
\end{proof}

Using \autoref{Partition-Rank-DC}, we proceed to prove \autoref{Partition-CTFT-DC}.
\begin{proof}[Proof of \autoref{Partition-CTFT-DC}]
Together with  $T_G(\mathfrak{p};x,y)=R_G(\mathfrak{p};x-1,y-1)$ in \eqref{Partition-Tutte-Def} and \autoref{Partition-Rank-DC}, we obtain \eqref{Partition-Tutte-DC}. Applying \eqref{Partition-Tutte-DC} to \eqref{Partition-Chromatic-Tutte-Relation} (\eqref{Partition-Tension-Tutte-Relation}, \eqref{Partition-Flow-Tutte-Relation}, resp.), we can directly deduce
\eqref{Partition-Chromatic-DC} (\eqref{Partition-Tension-DC}, \eqref{Partition-Flow-DC}, resp.). Finally, since $r(G)-r(S)=c(S)-c(G)$, from \eqref{Partition-Chromatic-Def} and  \eqref{Partition-Tension-Def}, we have the relation $\chi_G(\mathfrak{p},t)=t^{c(G)}\tau_G(\mathfrak{p},t)$, which completes the proof.
\end{proof}

\section{Proofs of \autoref{thm:nowhere-zero affine flow counting} and \autoref{thm:nowhere-zero affine flow counting1}}\label{Sec-3}
\subsection{Proof of Theorems \ref{thm:nowhere-zero affine flow counting} and \ref{thm:nowhere-zero affine flow counting1} }\label{Sec3-1}
We begin by characterizing when the affine subspaces $H_e^{\bm b}$ are hyperplanes in $F(G,\bm b;\mathbb{F})$ via cut-edges and cycle-edges.
\begin{lemma}\label{prop:characterization of cosets}
Let $\bm b\in B(G,\mathbb{F})$. Then, for a cut-edge $e\in E_{\rm cut}(G)$, $H_e^{\bm b}=F(G,\bm b;\mathbb{F})$ if $e\in E_{\rm cut}^{\bm b}(G)$, and $H_e^{\bm b}=\emptyset$ otherwise; for a cycle-edge $e\in E_{\rm cyc}(G)$, $H_e^{\bm b}$ is a hyperplane in $F(G,\bm b;\mathbb{F})$.
\end{lemma}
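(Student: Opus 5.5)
The plan is to use the equivalence \eqref{eq:equivalence for coset intersection nonempty} with $S=\{e\}$, together with \eqref{Identity}. These give
\[
H_e^{\bm b}\neq\emptyset\iff \bm b\in B(G\setminus e,\mathbb F)=B_{\pi(E(G)\smallsetminus e)}.
\]
Since $F(G,\bm b;\mathbb F)$ is a nonempty affine subspace of $\mathbb F^{E(G)}$ (because $\bm b\in B(G,\mathbb F)$), $H_e^{\bm b}$ is the intersection of this coset with the coordinate hyperplane $x_e=0$. There are therefore only three possibilities: $H_e^{\bm b}$ is empty, it is the whole coset, or it is a hyperplane in the coset. It is the whole coset exactly when the linear functional $\bm c\mapsto \bm c(e)$ is constant on $F(G,\bm b;\mathbb F)$. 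That happens exactly when $\bm c(e)=0$ for every flow $\bm c\in\ker\partial$, i.e. when $e$ lies in no cycle.

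For a cycle-edge $e$, I would take a cycle $C$ through $e$ and its signed indicator flow $\bm z_C$, which satisfies $\partial\bm z_C=0$ and $\bm z_C(e)=\pm1$. Starting from any $\bm c_0\in F(G,\bm b;\mathbb F)$, the line $\bm c_0+s\bm z_C$ lies in $F(G,\bm b;\mathbb F)$, and the coordinate $x_e$ runs through all of $\mathbb F$ along it. Choosing $s=\mp\bm c_0(e)$ shows $H_e^{\bm b}\neq\emptyset$, and the nonconstancy of $x_e$ shows $H_e^{\bm b}\neq F(G,\bm b;\mathbb F)$. So $H_e^{\bm b}$ is the zero set of a nonconstant affine function on the coset, hence a hyperplane.

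For a cut-edge $e$, every flow vanishes on $e$. One can see this by summing $\partial\bm c$ over the vertex set of one side $U$ of the bond $\{e\}$: the sum equals $\pm\bm c(e)$, and it must be $0$ for a flow. Hence $x_e$ is constant on $F(G,\bm b;\mathbb F)$, so $H_e^{\bm b}$ is either the whole coset or empty. It is nonempty iff $\bm b\in B(G\setminus e,\mathbb F)$, which by definition means $e\in E^{\bm b}_{\rm cut}(G)$. This gives the dichotomy in the statement.

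I expect the cut-edge step to be the main obstacle, namely justifying the identification of nonemptiness with $\bm b\in B(G\setminus e,\mathbb F)$. That is immediate from $H_e^{\bm b}\cong F(G\setminus e,\bm b;\mathbb F)$, which the paper records just before \eqref{eq:equivalence for coset intersection nonempty}. As an alternative I would make it explicit: for $\bm c\in F(G,\bm b;\mathbb F)$, the sum $\sum_{v\in U}\bm b(v)=\pm\bm c(e)$. Thus $H_e^{\bm b}$ is all or nothing according to whether $\sum_{v\in U}\bm b(v)=0$, which is the block condition defining $B_{\pi(G\setminus e)}$.
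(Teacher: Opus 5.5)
Your proposal is correct and follows essentially the paper's argument. For a cycle-edge, a flow nonzero on $e$ makes $x_e$ nonconstant on the coset $F(G,\bm b;\mathbb{F})$, so $H_e^{\bm b}$ is a hyperplane; for a cut-edge, every flow vanishes on $e$, so $H_e^{\bm b}$ is either all of the coset or empty, and it is nonempty exactly when $\bm b\in B(G\setminus e,\mathbb{F})$. Your bond-summation argument that flows vanish on cut-edges, which the paper takes for granted, is a welcome addition. One small wording fix: $H_e^{\bm b}$ is the whole coset only when $x_e$ is constantly \emph{zero} on it, not merely constant, though your cut-edge paragraph already handles this correctly.
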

\begin{proof}
Fix $e\in E(G)$. If $H_e^{\bm b}\ne\emptyset$, then for any fixed $\bm c_0\in H_e^{\bm b}$,  we have
\begin{equation}\label{eq:affine representation}
H_e^{\bm b}=\bm c_0+H_e^{\bm 0}\quad\text{and}\quad F(G,\bm b;\mathbb{F})=\bm c_0+F(G,\bm 0;\mathbb{F}).
\end{equation}
If $e$ is a cut-edge, then every flow vanishes on $e$. So, $H_e^{\bm 0}=F(G,\bm 0;\mathbb{F})$. According to \eqref{eq:affine representation}, $H_e^{\bm b}=F(G,\bm b;\mathbb{F})$ whenever $H_e^{\bm b}\ne\emptyset$. Notably, $H_e^{\bm b}\ne\emptyset$ if and only if there exists $\bm c\in \mathbb{F}^{E(G)}$ with $\partial\bm c=\bm b$ and $\bm c(e)=0$. This is equivalent to $\bm b\in B(G\setminus e,\mathbb{F})$. If $e$ is a cycle-edge, then there exists a flow $\bm c_1\in F(G,\bm 0;\mathbb{F})$ with $\bm c_1(e)\ne 0$. This means that $H_e^{\bm 0}$ is a hyperplane in $F(G,\bm 0;\mathbb{F})$. Since $\bm b\in B(G,\mathbb{F})$, there is $\bm c\in F(G,\bm b;\mathbb{F})$. Consequently,  $\bm c-\frac{\bm c(e)}{\bm c_1(e)}\bm c_1\in H_e^{\bm b}$, hence $H_e^{\bm b}\ne\emptyset$. By \eqref{eq:affine representation}, $H_e^{\bm b}$ is a hyperplane in $F(G,\bm b;\mathbb{F})$.
\end{proof}
\autoref{prop:characterization of cosets} shows that only cycle-edges give rise to hyperplanes in  $\tilde{\mathcal{A}}_{E(G)}^{\bm b}$. Thus, we have
\[
\tilde{\mathcal{A}}_{E(G)}^{\bm b}=\big\{H_e^{\bm b}: e\in E_{\rm cyc}(G)\big\}.
\]
Hence, the intersection poset $L\big(\tilde{\mathcal{A}}_{E(G)}^{\bm b}\big)$ consists precisely of the nonempty intersections of hyperplanes $H_e^{\bm b}$ induced by cycle-edges $e$. It follows from \eqref{eq:equivalence for coset intersection nonempty} that  the intersection poset $L\big(\tilde{\mathcal{A}}_{E(G)}^{\bm b}\big)$ can be explicitly expressed in the form
\[
L\big(\tilde{\mathcal{A}}_{E(G)}^{\bm b}\big)=\big\{H_S^{\bm b}: S\subseteq E_{\rm cyc}(G)\And \bm b\in B_{\pi(S^c)}\big\}.
\] 

Notice that $H_S^{\bm b}$ is the solution space of the linear system $M_G\bm c=\bm b$ with the constraints $\bm c(e)=0$ for all $e\in S$. When $H_S^{\bm b}\ne\emptyset$, we have
\begin{equation}\label{eq:dimension of stratum}
\dim H_S^{\bm b}=|E(G)|-{\rm rank}\Big(\begin{bmatrix}M_G\\I_S\end{bmatrix}\Big)=n(S^c),
\end{equation}
where $I_S$ is the submatrix of the  identity matrix $I_{E(G)}$ consisting of the rows indexed by $S$.  By \eqref{eq:equivalence for coset intersection nonempty} and \eqref{eq:dimension of stratum}, the characteristic polynomial $\chi(\tilde{\mathcal{A}}_{E(G)}^{\bm b},t)$ of $\tilde{\mathcal{A}}_{E(G)}^{\bm b}$ can be expressed as
\begin{equation}\label{eq:characteristic polynomial of affine flow arrangement}
\chi(\tilde{\mathcal{A}}_{E(G)}^{\bm b},t)=\sum_{S\subseteq E_{\rm cyc}(G),\,\bm b\in B_{\pi(S^c)}}(-1)^{|S|}t^{n(S^c)}.
\end{equation}

\begin{proof}[Proof of \autoref{thm:nowhere-zero affine flow counting}]
Proof of part (a). Note that for any $S\subseteq E_{\rm cyc}(G)$, $n(S^c)=n(G)$ if $S=\emptyset$, and $n(S^c)<n(G)$ otherwise. Since $\bm b\in B(G,\mathbb{F})=B_{\pi(\emptyset^c)}$, it follows from \eqref{eq:dimension of stratum} that $\chi(\tilde{\mathcal{A}}_{E(G)}^{\bm b},t)$ has degree $n(G)$ with leading coefficient $1$.

Proof of part (b). Recall that the partition-selected flow polynomial $\varphi_G(\tilde{\Pi}_{\bm b},t)$ is  \[\varphi_G(\tilde{\Pi}_{\bm b},t)=\sum_{S^c\subseteq E(G),\,\pi(S^c)\in \tilde{\Pi}_{\bm b}}(-1)^{|S|}t^{n(S^c)}.\] Since $\tilde{\Pi}_{\bm b}\subseteq\tilde{\Pi}(G)=\big\{\pi(S)\in \Pi(G):E_{\rm cut}(G)\subseteq S\big\}$, it follows that 
\[
\tilde{\Pi}_{\bm b}=\big\{\pi(S)\in \Pi(G):E_{\rm cut}(G)\subseteq S,\bm b\in B_{\pi(S)}\big\}.
\]
According to this, we can rewrite $\varphi_G(\tilde{\Pi}_{\bm b},t)$ as 
\[
\varphi_G(\tilde{\Pi}_{\bm b},t)=\sum_{S\subseteq E_{\rm cyc}(G),\,\bm b\in B_{\pi(S^c)}}(-1)^{|S|}t^{n(S^c)}.
\]
By \eqref{eq:characteristic polynomial of affine flow arrangement}, we have $\chi(\tilde{\mathcal{A}}_{E(G)}^{\bm b},t)=\varphi_G(\tilde{\Pi}_{\bm b},t)$. When $\bm b=\bm 0$, we have $\tilde{\Pi}_{\bm 0}=\tilde{\Pi}(G)=\Pi(G\setminus E_{\rm cut}(G))$. Thus, $\varphi_G(\tilde{\Pi}_{\bm 0},t)=\varphi_{G\setminus E_{\rm cut}(G)}(t)$.

On the other hand, the assigning polynomial in \eqref{AP} is given by
\[
\varphi_G(\alpha,t)=\sum_{S\subseteq E(G),\,G\setminus S\text{ is $\bm b$-compatible}}(-1)^{|S|}t^{n(S^c)}.
\]
Note from \cite[Proposition 2.1]{Jaeger1992} that for any $\bm b\in \mathbb{F}^{V(G)}$, 
\begin{equation}\label{CBS}
\bm b\in B(G,\mathbb{F})\iff \sum_{v\in V(H)}\bm b(v)=0 \text{ for each component $H$ of $G$},
\end{equation} 
see \autoref{prop:characterization of boundary space}. Combining \eqref{Identity}, the condition that $G\setminus S$ is $\bm b$-compatible is equivalent to $\bm b\in B_{\pi(S^c)}$. Thus, $\varphi_{G\setminus E_{\rm cut}^{\bm b}(G)}(\alpha,t)$ can be written as
\[
\varphi_{G\setminus E_{\rm cut}^{\bm b}(G)}(\alpha,t)=\sum_{S\subseteq E(G\setminus E_{\rm cut}^{\bm b}(G)),\,\bm b\in B_{\pi(S^c)}}(-1)^{|S|}t^{n(S^c)},
\]
where $S^c=E(G)\smallsetminus E_{\rm cut}^{\bm b}(G)\smallsetminus S$.
Notice that for any $e\in E_{\rm cut}(G)\smallsetminus E_{\rm cut}^{\bm b}(G)$, $\bm b\notin B_{\pi(E(G)\smallsetminus e)}$ by  \autoref{prop:characterization of cosets}. Thus, the condition that $S\subseteq E(G\setminus E_{\rm cut}^{\bm b}(G))$ and $\bm b\in B_{\pi(S^c)}$ implies 
$S\subseteq E_{\rm cyc}(G)$. Applying \autoref{prop:characterization of cosets} again, we deduce $H_{E_{\rm cut}^{\bm b}(G)}^{\bm b}=F(G,\bm b;\mathbb{F})$. It follows that for any $S\subseteq E_{\rm cyc}(G)$, $H_{E_{\rm cut}^{\bm b}(G)\sqcup S}^{\bm b}\ne\emptyset$ if and only if $H_S^{\bm b}\ne\emptyset$.  Combining \eqref{eq:equivalence for coset intersection nonempty}, we have 
\[
\big\{S: S\subseteq E_{\rm cyc}(G), \bm b\in B_{\pi(E(G)\smallsetminus E_{\rm cut}^{\bm b}\smallsetminus S)}\big\}=\big\{S: S\subseteq E_{\rm cyc}(G), \bm b\in B_{\pi(E(G)\smallsetminus S)}\big\}.
\]
Following this,  we can rewrite $\varphi_{G\setminus E_{\rm cut}^{\bm b}(G)}(\alpha,t)$ in the form
\[
\varphi_{G\setminus E_{\rm cut}^{\bm b}(G)}(\alpha,t)=\sum_{S\subseteq E_{\rm cyc}(G),\,\bm b\in B_{\pi(E(G)\smallsetminus S)}}(-1)^{|S|}t^{n(E(G)\smallsetminus E_{\rm cut}^{\bm b}(G)\smallsetminus S)}.
\]
As $n(E(G)\smallsetminus S)=n(E(G)\smallsetminus E_{\rm cut}^{\bm b}(G)\smallsetminus S)$, we conclude $\chi(\tilde{\mathcal{A}}_{E(G)}^{\bm b},t)=\varphi_{G\setminus E_{\rm cut}^{\bm b}(G)}(\alpha,t)$ by \eqref{eq:characteristic polynomial of affine flow arrangement}. 

Proof of part (c). Applying part (b) to \eqref{AP}, we directly deduce
\[
\chi(\tilde{\mathcal{A}}_{E(G)}^{\bm b},q)=\varphi_G(\tilde{\Pi}_{\bm b},q)=\varphi_{G\setminus E_{\rm cut}^{\bm b}(G)}(\alpha,q)=|F_{\rm nz}(G\setminus E_{\rm cut}^{\bm b}(G),\bm b;\mathbb{F}_q)|.
\]
It remains to prove $|M(\tilde{\mathcal{A}}_{E(G)}^{\bm b})|=|F_{\rm nz}(G\setminus E^{\bm b}_{\rm cut}(G),\bm b;\mathbb{F}_q)|$. Note from \autoref{prop:characterization of cosets} that $H_e\cap F(G,\bm b;\mathbb{F}_q)=F(G,\bm b;\mathbb{F}_q)$ for any $e\in E_{\rm cut}^{\bm b}(G)$, and $H_e\cap F(G,\bm b;\mathbb{F}_q)=\emptyset$ for any $e\in E_{\rm cut}(G)\smallsetminus E_{\rm cut}^{\bm b}(G)$ . This implies that each element of $F(G,\bm b;\mathbb{F}_q)$ vanishes on $E^{\bm b}_{\rm cut}$, and is nowhere-zero on $E_{\rm cut}(G)\smallsetminus E_{\rm cut}^{\bm b}(G)$. Therefore, all elements of  $M(\tilde{\mathcal{A}}_{E(G)}^{\bm b})$ vanish on $E^{\bm b}_{\rm cut}$, are nowhere-zero on the remaining edges, and have boundary $\bm b$. Consequently, we have $M(\tilde{\mathcal{A}}_{E(G)}^{\bm b})\cong F_{\rm nz}(G\setminus E^{\bm b}_{\rm cut},\bm b;\mathbb{F}_q)$. This completes the proof.
\end{proof}

According to \autoref{prop:characterization of cosets}, the two restricted arrangements $\mathcal{A}_{E(G)}^{\bm b}$ and $\tilde{\mathcal{A}}_{E(G)}^{\bm b}$ satisfy  the following close relationship:
\[
\mathcal{A}_{E(G)}^{\bm b}=
\begin{cases}\tilde{\mathcal{A}}_{E(G)}^{\bm b},&\text{if }E_{\rm cut}^{\bm b}(G)=\emptyset;\\
\tilde{\mathcal{A}}_{E(G)}^{\bm b}\sqcup\{F(G,\bm b;\mathbb{F})\},&\text{if }E_{\rm cut}^{\bm b}(G)\ne\emptyset.
\end{cases}
\]
Consequently, we have
\[
\chi(\mathcal{A}_{E(G)}^{\bm b},t)=
\begin{cases}
\chi(\tilde{\mathcal{A}}_{E(G)}^{\bm b},t),&\text{if }E_{\rm cut}^{\bm b}(G)=\emptyset;\\
0,&\text{if }E_{\rm cut}^{\bm b}(G)\ne\emptyset.
\end{cases}
\]
Thus, part (a) of \autoref{thm:nowhere-zero affine flow counting1} follows directly from part (a)  of  \autoref{thm:nowhere-zero affine flow counting}. The proofs of the remaining parts of \autoref{thm:nowhere-zero affine flow counting1} are similar to those of parts (b) and (c) of \autoref{thm:nowhere-zero affine flow counting}, and require only minor modifications. Therefore, we omit the detailed proofs  of \autoref{thm:nowhere-zero affine flow counting1}. 

\subsection{Deletion-contraction formulas}\label{Sec3-2}
Fix an edge $e$ of $G$ with ends $u$ and $v$. Recall that $G' = G\setminus e$ and $G'' = G/e$. We reduce each vertex chain $\bm b\in\mathbb{F}^{V(G)}$ of $G$ to a vertex chain $\bm b'\in\mathbb{F}^{V(G')}$ of $G'$ and a vertex chain $\bm b''\in\mathbb{F}^{V(G'')}$ of $G''$ in the following way: if $e$ is a loop, we set $\bm b'=\bm b''=\bm b$; if $e$ is an edge with distinct ends $u$ and $v$, we set
\[
\bm b':=\bm b,\quad\quad
\bm b''(w):=
\begin{cases}
\bm b(w), &\text{if } w\ne u,v;\\
\bm b(u)+\bm b(v), & \text{if } w=uv/e.
\end{cases}
\]
Associated with $\bm b'$ and $\bm b''$, and similar to $\Pi_{\bm b}$, we further consider the following two selections:
\[
\Pi_{\bm b'}:=\big\{\pi\in\Pi(G'):\bm b'\in B_\pi\big\},\quad \Pi_{\bm b''}:=\big\{\pi\in\Pi(G''):\bm b''\in B_\pi\big\}.
\]
It is natural to ask whether the affine flow polynomial $\varphi_G(\Pi_{\bm b},t)$ satisfies a deletion-contraction recurrence associated to affine flow polynomials $\varphi_G(\Pi_{\bm b'},t)$ and $\varphi_G(\Pi_{\bm b''},t)$. 

Recall from \eqref{Triple-Partitions} that 
\[
\Pi_{\bm b}'=\Pi_{\bm b}\cap\Pi(G')\quad\And\quad \Pi_{\bm b}''=\big\{\pi/e:\pi\in\Pi_{\bm b}\cap\Pi_e(G)\big\},
\]
where $\Pi_e(G)=\big\{\pi\in\Pi(G):\{u,v\} \text{ is contained in a block of } \pi\big\}$, as presented in \eqref{Pi-e}. Before proceeding further, we require the following lemma. 
\begin{lemma}\label{lem:deletion-contraction for selection}
With the above notations, we have
\[
\Pi_{\bm b}'=\Pi_{\bm b'}\quad\quad\text{and}\quad\quad\Pi_{\bm b}''=\Pi_{\bm b''}.
\]
\end{lemma}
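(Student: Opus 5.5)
We prove the two equalities separately, unwinding the definitions of $\Pi_{\bm b}'$, $\Pi_{\bm b}''$, $\Pi_{\bm b'}$, $\Pi_{\bm b''}$ and using only the definition of $B_\pi$. The basic observation is that the condition $\bm x\in B_\pi$ depends only on the blocks of $\pi$ as vertex sets, not on which graph $\pi$ is regarded as a connected partition of.

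\emph{Deletion.} If $e$ is a loop, then $G'$ and $G$ have the same vertex set and the same connected partitions, so $\Pi(G')=\Pi(G)$. With $\bm b'=\bm b$ both sides equal $\Pi_{\bm b}$. If $e$ is a link, then $V(G')=V(G)$ and $\bm b'=\bm b$. A partition $\pi$ lies in $\Pi_{\bm b}'=\Pi_{\bm b}\cap\Pi(G')$ exactly when:
\begin{itemize}
\item $\pi\in\Pi(G')$, which already implies $\pi\in\Pi(G)$ because $G'$ is a spanning subgraph of $G$, so every $G'[V_i]$ connected gives $G[V_i]$ connected; and
\item $\bm b\in B_\pi$.
\end{itemize}
This is precisely the defining condition of $\Pi_{\bm b'}$, so $\Pi_{\bm b}'=\Pi_{\bm b'}$.

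\emph{Contraction.} For a loop, $G''=G'$, $\bm b''=\bm b$ and $\Pi_e(G)=\Pi(G)$, so the claim reduces to the deletion case. For a link $e=uv$, we use the map $\pi\mapsto\pi/e$. It is a bijection from $\Pi_e(G)$ onto $\Pi(G'')$, whose inverse splits the new vertex $w_e:=uv/e$ back into $u$ and $v$. We check two points.
\begin{itemize}
\item \textbf{Well-definedness.} Suppose $\pi\in\Pi_e(G)$ and $V_i$ is the block containing $u$ and $v$. Then $G[V_i]/e$ is connected and equals $G''[V_i/e]$. Every other block induces the same subgraph in $G$ and in $G''$.
\item \textbf{Surjectivity.} Suppose $\sigma\in\Pi(G'')$ and $W$ is the block containing $w_e$. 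Let $W^\ast$ be $W$ with $w_e$ replaced by $\{u,v\}$. Then $G[W^\ast]$ is connected: it contains the edge $e$, and contracting $e$ in $G[W^\ast]$ gives the connected graph $G''[W]$, which reconnects every component.
\end{itemize}

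Next, the boundary conditions match block by block. For the block $W$ containing $w_e$,
\[
\sum_{x\in W}\bm b''(x)=\sum_{x\in W^\ast}\bm b(x),
\]
because $\bm b''(w_e)=\bm b(u)+\bm b(v)$. On every other block, $\bm b''$ and $\bm b$ coincide. Hence $\bm b\in B_\pi$ if and only if $\bm b''\in B_{\pi/e}$.

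Putting these together, $\Pi_{\bm b}''=\{\pi/e:\pi\in\Pi_e(G),\ \bm b\in B_\pi\}=\{\sigma\in\Pi(G''):\bm b''\in B_\sigma\}=\Pi_{\bm b''}$.

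The only step that needs real care is the connectivity check showing that $\pi\mapsto\pi/e$ maps $\Pi_e(G)$ onto $\Pi(G'')$. Everything else is direct bookkeeping with the sums defining $B_\pi$.
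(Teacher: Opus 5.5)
Your proposal is correct and takes essentially the same approach as the paper. The deletion part is immediate from $V(G')=V(G)$ and $\bm b'=\bm b$, and the contraction part rests on the block-by-block identity $\sum_{x\in W}\bm b''(x)=\sum_{x\in W^\ast}\bm b(x)$, which gives $\bm b\in B_\pi\iff\bm b''\in B_{\pi/e}$; the only difference is that you explicitly check that $\pi\mapsto\pi/e$ is a bijection from $\Pi_e(G)$ onto $\Pi(G'')$, whereas the paper works with representatives $\pi(S)$ with $e\in S$ and tacitly assumes, in its converse inclusion, that every member of $\Pi(G'')$ has this form.
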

\begin{proof}
Since $\Pi(G')\subseteq\Pi(G)$, $V(G')=V(G)$, and $\bm b'=\bm b$, it follows that 
\[
\Pi_{\bm b}'=\Pi_{\bm b}\cap\Pi(G')=\big\{\pi(S):S\subseteq E(G)\smallsetminus e, \bm b\in B_{\pi(S)}\big\}=\Pi_{\bm b'}.
\]
For the latter equation, when $e$ is a loop, it is straightforward to see that $\bm b''=\bm b$, $\Pi(G)=\Pi_e(G)$, and $B_{\pi(S)}=B_{\pi(S\smallsetminus e)}$.
Therefore, we deduce
\[
\Pi_{\bm b}''=\big\{\pi(S)/e:S\subseteq E(G), \bm b\in B_{\pi(S)}\big\}=\Pi_{\bm b''}.
\]
When $e$ is an edge with distinct ends $u$ and $v$, given any $\pi(S)\in\Pi_e(G)$, let $V_{uv}$ denote the block containing both $u$ and $v$. Then
$\sum_{w\in V_{uv}\setminus\{u,v\}}\bm b''(w)+\bm b''(uv/e)=\sum_{w\in V_{uv}}\bm b(w)$, and for any other block $W\ne V_{uv}$, we have $\sum_{w\in W}\bm b''(w)=\sum_{w\in W}\bm b(w)$. It follows from \eqref{CBS} that 
\[
\bm b\in B_{\pi(S)}\Longleftrightarrow\bm b''\in B_{\pi(S/e)}.
\]
Hence, $\pi(S)\in\Pi_{\bm b}\cap\Pi_e(G)$ implies $\pi(S)/e\in\Pi_{\bm b''}$, and thus $\Pi_{\bm b}''\subseteq\Pi_{\bm b''}$. Conversely, if $\pi(S)/e\in\Pi_{\bm b''}$ with $e\in S$, then $\bm b''\in B_{\pi(S/e)}$, and hence $\bm b\in B_{\pi(S)}$. Consequently, $\pi(S)/e\in\Pi_{\bm b}''$. This completes the proof of the second equation.
\end{proof}

We are now ready to state our desired deletion-contraction formulas for affine flow polynomials $\varphi_G(\Pi_{\bm b},t)$ and characteristic polynomials $\chi(\mathcal{A}_{E(G)}^{\bm b},t)$.
\begin{corollary}[Deletion-Contraction Formulas]\label{Deletion-Contraction}
The polynomials $\varphi_G(\Pi_{\bm b},t)$ and $\chi(\mathcal{A}_{E(G)}^{\bm b},t)$ satisfy the deletion-contraction recurrences:
\[
\varphi_G(\Pi_{\bm b},t)=\begin{cases}
(t-1)\varphi_{G'}(\Pi_{\bm b'},t),&\text{if $e$ is a loop};\\
\varphi_{G''}(\Pi_{\bm b''},t)-\varphi_{G'}(\Pi_{\bm b'},t),&\text{otherwise}.
\end{cases}
\]
and
\[
\chi(\mathcal{A}_{E(G)}^{\bm b},t)=\begin{cases}
(t-1)\chi(\mathcal{A}_{E(G')}^{\bm b'},t),&\text{if $e$ is a loop};\\
\chi(\mathcal{A}_{E(G'')}^{\bm b''},t)-\chi(\mathcal{A}_{E(G')}^{\bm b'},t),&\text{otherwise}.
\end{cases}
\]
\end{corollary}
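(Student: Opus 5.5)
The plan is to derive the first recurrence directly from the general deletion-contraction formula \eqref{Partition-Flow-DC} of \autoref{Partition-CTFT-DC}, applied to the selection $\mathfrak{p}=\Pi_{\bm b}$. That theorem gives
\[
\varphi_G(\Pi_{\bm b},t)=(t-1)\varphi_{G'}(\Pi_{\bm b}',t)\ \text{for a loop } e,
\]
and otherwise
\[
\varphi_G(\Pi_{\bm b},t)=\varphi_{G''}(\Pi_{\bm b}'',t)-\varphi_{G'}(\Pi_{\bm b}',t).
\]
By \autoref{lem:deletion-contraction for selection} we have $\Pi_{\bm b}'=\Pi_{\bm b'}$ and $\Pi_{\bm b}''=\Pi_{\bm b''}$. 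Substituting these gives exactly the stated recurrence for $\varphi_G(\Pi_{\bm b},t)$. The only care needed is that the induced selections \eqref{Triple-Partitions} are the ones used in \autoref{Partition-CTFT-DC}, which is precisely how the lemma is phrased.

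For the second recurrence, I would use \autoref{thm:nowhere-zero affine flow counting1}(b) to write $\chi(\mathcal{A}_{E(G)}^{\bm b},t)=\varphi_G(\Pi_{\bm b},t)$, and similarly for $G'$ with $\bm b'$ and for $G''$ with $\bm b''$. To apply that theorem to the smaller graphs, I must check its hypotheses: $\bm b'\in B(G',\mathbb{F})$ and $\bm b''\in B(G'',\mathbb{F})$. This is the one point needing attention.

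\textbf{Contraction.} Here the check is straightforward by \eqref{CBS}. Components of $G''$ correspond to components of $G$, and the vertex sums of $\bm b''$ over them equal those of $\bm b$, so they vanish.

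\textbf{Deletion.} Here $\bm b'=\bm b$ may fail to lie in $B(G',\mathbb{F})$ when $e$ is a cut-edge not in $E^{\bm b}_{\rm cut}(G)$. In that case $F(G',\bm b';\mathbb{F})=\emptyset$, and by the paper's convention $\chi(\mathcal{A}_{E(G')}^{\bm b'},t)=0$. Meanwhile $\Pi_{\bm b'}=\emptyset$: every $\pi\in\Pi(G')$ is finer than $\pi(E(G'))$, so $B_\pi\subseteq B(G',\mathbb{F})$ and $\bm b\notin B_\pi$. Hence $\varphi_{G'}(\Pi_{\bm b'},t)=0$ by the empty-selection convention, and the identification $\chi=\varphi$ still holds in this degenerate case.

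With both identifications in hand, the arrangement recurrence is a verbatim translation of the first one. I expect this bookkeeping of the degenerate boundary case to be the only real obstacle. Everything else is direct substitution of \autoref{lem:deletion-contraction for selection} into \autoref{Partition-CTFT-DC}.
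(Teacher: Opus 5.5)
Your proposal is correct and follows the paper's own proof. Both substitute \autoref{lem:deletion-contraction for selection} into \eqref{Partition-Flow-DC} and then transfer the recurrence to the arrangements via \autoref{thm:nowhere-zero affine flow counting1}(b). Your extra checks fill in a detail the paper leaves implicit: you verify that $\bm b''\in B(G'',\mathbb{F})$, and you treat the case where $e$ is a cut-edge with $\bm b\notin B(G\setminus e,\mathbb{F})$, in which both $\chi(\mathcal{A}_{E(G')}^{\bm b'},t)$ and $\varphi_{G'}(\Pi_{\bm b'},t)$ vanish.
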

\begin{proof}
According to \autoref{lem:deletion-contraction for selection}, we have
\[
\varphi_{G'}(\Pi_{\bm b'},t)=\varphi_{G'}(\Pi_{\bm b}',t)\quad\And\quad \varphi_{G''}(\Pi_{\bm b''},t)=\varphi_{G''}(\Pi_{\bm b}'',t).
\]
Combining \eqref{Partition-Flow-DC}, the first deletion-contraction recurrence holds for $\varphi_G(\Pi_{\bm b},t)$. Applying this to 
 \autoref{thm:nowhere-zero affine flow counting1}, the second deletion-contraction recurrence holds for $\chi(\mathcal{A}_{E(G)}^{\bm b},t)$.
\end{proof}

In general, the corresponding deletion-contraction recurrence in \autoref{Deletion-Contraction} does not hold for $\chi(\tilde{\mathcal{A}}_{E(G)}^{\bm b},t)$ and $\varphi_G(\tilde{\Pi}_{\bm b},t)$. For example, let $G$ be the $3$-cycle $C_3$, and $e$ be one of its edges. When $\bm b=\bm 0$, we can easily obtain 
\[
\chi(\tilde{\mathcal{A}}_{E(G)}^{\bm 0},t)=t-1,\quad \chi(\tilde{\mathcal{A}}_{E(G')}^{\bm 0'},t)=1,\quad \chi(\tilde{\mathcal{A}}_{E(G'')}^{\bm 0''},t)=t-1.
\]
We therefore see that $\chi(\tilde{\mathcal{A}}_{E(G)}^{\bm 0},t)\ne \chi(\tilde{\mathcal{A}}_{E(G'')}^{\bm 0''},t)-\chi(\tilde{\mathcal{A}}_{E(G')}^{\bm 0'},t)$. It follows from part (b) of \autoref{thm:nowhere-zero affine flow counting} that $\varphi_G(\tilde{\Pi}_{\bm b},t)\ne\varphi_{G''}(\tilde{\Pi}_{\bm b''},t)-\varphi_{G'}(\tilde{\Pi}_{\bm b'},t)$.
\section{Proofs of \autoref{Classification} and \autoref{thm:comparison}}\label{Sec4}
\subsection{Proof of \autoref{Classification}}\label{Sec4-1}
In this subsection, we are interested in classifying polynomials that count nowhere-zero affine flows and two types of restricted arrangements associated with $G$, as presented in \autoref{Classification}. Recall that $\Pi(G)$ is the collection of partitions of $V(G)$ corresponding to spanning subgraphs of $G$, i.e.,
\[\Pi(G)=\big\{\pi(S):S\subseteq E(G)\big\},\] 
and $\tilde{\Pi}(G)$ is the set of partitions $\pi\in\Pi(G)$ such that the ends of each cut-edge are contained in a block of $\pi$, i.e.,
\[
\tilde{\Pi}(G)=\big\{\pi(S)\in \Pi(G):E_{\rm cut}(G)\subseteq S\big\}.
\]
The corresponding boundary arrangement $\mathcal{A}_{\Pi(G)}$ and reduced boundary arrangement $\mathcal{A}_{\tilde{\Pi}(G)}$ are given respectively by
\[
\mathcal{A}_{\Pi(G)}=\big\{B_\pi:\pi\in\Pi(G), |\pi|=|\pi(E(G))|+1\big\}
\]
and
\[
\mathcal{A}_{\tilde{\Pi}(G)}=\big\{B_\pi:\pi\in\tilde{\Pi}(G), |\pi|=|\pi(E(G))|+1\big\}.
\]
The arrangements $\mathcal{A}_{\Pi(G)}$ and $\mathcal{A}_{\tilde{\Pi}(G)}$ are indeed hyperplane arrangements arising from the bond structure of $G$. An edge subset $F\subseteq E(G)$  is an {\em edge cut} in $G$ if there exists a partition \{X,Y\} of $V(G)$ such that $F=E[X,Y]$, where $E[X,Y]$ is the  set of edges of $G$ with one end in $X$ and the other end in $Y$. A minimal nonempty edge cut in $G$ is called a {\em bond}. To see this, we need a key characterization of boundary spaces originally due to Jaeger \cite{Jaeger1992}.  
\begin{proposition}[\cite{Jaeger1992}, Proposition 2.1]\label{prop:characterization of boundary space} 
The boundary space $B(G,\mathbb{F})$ consists of functions $\bm b\in \mathbb{F}^{V(G)}$ such that for each component $H$ of $G$,
\[
\sum_{v\in V(H)}\bm b(v)=0.
\]
\end{proposition}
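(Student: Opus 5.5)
The plan is to show the two inclusions between $B(G,\mathbb{F})$ and the space
\[
Z:=\Big\{\bm b\in\mathbb{F}^{V(G)}:\textstyle\sum_{v\in V(H)}\bm b(v)=0 \text{ for each component } H \text{ of } G\Big\}.
\]
Since $\partial$ is linear, $B(G,\mathbb{F})$ is a subspace. For each component $H$ of $G$, summing \eqref{Boundary-Matrix} over $v\in V(H)$ gives
\[
\sum_{v\in V(H)}\partial\bm c(v)=\sum_{e}\bm c(e)\sum_{v\in V(H)}m_{ve}.
\]
Every edge of $G$ has both ends in a single component. So for each edge $e$ with ends in $H$, the inner sum $\sum_{v\in V(H)} m_{ve}$ is $1+(-1)=0$ when $e$ is a link, and is $0$ termwise when $e$ is a loop. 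For edges in other components the inner sum is also $0$. This proves $B(G,\mathbb{F})\subseteq Z$.

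For the reverse inclusion $Z\subseteq B(G,\mathbb{F})$ I would argue constructively on a spanning forest. The problem reduces componentwise, because $\partial$ acts independently on each component, so I may assume $G$ is connected. Then I choose a spanning tree $T$ rooted at some vertex $r$ and process the non-root vertices from the leaves inward. Given $\bm b$ with $\sum_{v}\bm b(v)=0$, I set $\bm c(e)=0$ for every edge not in $T$. For a tree edge $e$ joining a vertex $v$ to its parent, I let $D_v$ denote the set of descendants of $v$ (including $v$ itself). I then define $\bm c(e)=\pm\sum_{w\in D_v}\bm b(w)$, with the sign chosen according to whether $v$ is the head or the tail of $e$.

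It then remains to verify that $\partial\bm c=\bm b$. At a non-root vertex $v$, the boundary $\partial\bm c(v)$ is the contribution of the parent edge minus the contributions of the child edges. By the telescoping structure of the subtree sums, this equals $\bm b(v)$. At the root $r$, the boundary is $\sum_{v\ \text{child of}\ r}\sum_{w\in D_v}\bm b(w)=-\bm b(r)$ up to the orientation sign, and this works out precisely because the total sum of $\bm b$ over the component is zero.

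An alternative route is a dimension count. Here $\operatorname{rank} M_G=|V(G)|-c(G)$, which is standard, and $\dim Z=|V(G)|-c(G)$, because the $c(G)$ defining functionals have disjoint supports and are therefore independent. Combined with the first inclusion, this gives equality.

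The main obstacle is only bookkeeping: getting the orientation signs right in the tree construction, or else justifying the rank of the incidence matrix over an arbitrary field. I would present the dimension argument if the rank fact is considered standard, and the tree construction otherwise.
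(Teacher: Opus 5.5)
Your proof is correct. The paper gives no argument of its own to compare against: it imports this proposition from \cite{Jaeger1992} and then applies it to the spanning subgraphs $G\setminus S$ to obtain \eqref{Identity} and \eqref{CBS}. Your first inclusion is the standard column-sum observation, and your spanning-forest construction is the standard proof of the converse.

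\emph{Tree construction versus dimension count.} The tree construction uses only addition and negation, so it works verbatim over any abelian group $A$. That is the generality of $B(G,A)$ in the introduction and of the cited result. The dimension count is confined to fields, which is still enough for every use the paper makes of the proposition. Because your argument allows loops, parallel edges and isolated vertices, it covers every $G\setminus S$, as the paper needs.

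\emph{Fixing the signs.} The sign bookkeeping you flag disappears with a uniform choice. For the tree edge $e_v$ joining $v$ to its parent $p(v)$, write $\beta_v:=\sum_{w\in D_v}\bm b(w)$ and set $\bm c(e_v):=m_{ve_v}\beta_v$. Since $m_{ve_v}^2=1$ and $m_{p(v)e_v}=-m_{ve_v}$, the edge $e_v$ contributes $+\beta_v$ at $v$ and $-\beta_v$ at $p(v)$. Hence $\partial\bm c(v)=\beta_v-\sum_u\beta_u=\bm b(v)$, with the sum over the children $u$ of $v$. At the root, $\partial\bm c(r)=-\sum_u\beta_u=\bm b(r)$ exactly, not merely up to sign.

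\emph{The rank fact.} This needs no citation over an arbitrary field. The condition $y^{T}M_G=0$ says that $y$ takes equal values at the two ends of every link, i.e.\ $y$ is constant on each component. So $\ker M_G^{T}$ has dimension $c(G)$. In fact $\operatorname{im}M_G=(\ker M_G^{T})^{\perp}$, which is exactly your space $Z$, so this one observation gives both inclusions at once.
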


\autoref{prop:characterization of boundary space} directly shows \eqref{Identity} in \autoref{Sec2-0}. If $\pi_1$ is finer than $\pi_2$ in $\Pi(G)$, then $B_{\pi_1}\subseteq B_{\pi_2}$ trivially holds. Since every partition $\pi\in\Pi(G)$ is finer than $\pi(E(G))$, it follows that every $B_\pi$ is contained in the boundary space $B(G,\mathbb{F})=B_{\pi(E(G))}$. Therefore, $B_\pi$ is indeed a subspace of $B(G,\mathbb{F})$. Moreover, note that $|\pi(S^c)|$ equals the number of connected components of $G\setminus S$. Thus, the condition $|\pi|=|\pi(E(G))|+1$ simply means that $\pi$ is obtained from $G$ by deleting a bond, i.e., $\pi$ is obtained from $\pi(E(G))$ by splitting a single block $V_j$ into two nonempty parts $V_{j_1}$ and $V_{j_2}$, with all other blocks remaining unchanged. Consequently, the defining equations for $B_\pi$ differ from those of $B_{\pi(E(G))}$ only in that the equation $\sum_{v\in V_j}\bm b(v)=0$ is replaced by the pair of equations $\sum_{v\in V_{j_1}}\bm b(v)=0$ and $\sum_{v\in V_{j_2}}\bm b(v)=0$. Thus, the dimension of each boundary subspace $B_\pi$ in $\mathcal{A}_{\Pi(G)}$ is exactly one dimension lower than the dimension of $B(G,\mathbb{F})$. Therefore, $\mathcal{A}_{\Pi(G)}$ is indeed a hyperplane arrangement in $B(G,\mathbb{F})$.

Fix an arbitrary $\pi\in\Pi(G)$. For each block $U$ of $\pi$, let $V$ be the vertex set of the unique connected component of $G$ containing $U$. If $U\ne V$, let $U_1,\ldots,U_{k_U}$ be the vertex sets of connected components of $G[V\smallsetminus U]$. Then, for each $j=1,\ldots,k_U$, $E[U,U_j]=E[V\smallsetminus U_j,U_j]$ is a bond of $G$, since both $G[U_j]$ and $G[V\smallsetminus U_j]$ are connected. Set $\pi_{U,j}:=\pi\big(E(G)\smallsetminus E[U,U_j]\big)$. Then $|\pi_{U,j}|=|\pi(E(G))|+1$, and hence $B_{\pi_{U,j}}\in\mathcal{A}_{\Pi(G)}$. Let $V_1,\ldots, V_{c(G)}$ be the vertex sets of connected components of $G$. We claim that 
\[
B_\pi=\bigcap_{U\in\pi,\, U\ne V_i,\,i=1,\ldots, c(G)}\bigcap_{j=1}^{k_U}B_{\pi_{U,j}}.
\] 
Since $\pi$ is finer than $\pi_{U,j}$ for all $U$ and $j$, we have $B_\pi\subseteq B_{\pi_{U,j}}$, and hence $B_\pi\subseteq\bigcap_{U\in\pi}\bigcap_{j=1}^{k_U}B_{\pi_{U,j}}$. Conversely, as each $U_j$ is a block of the corresponding partition $\pi_{U,j}$, we derive $\sum_{v\in U_j}\bm b(v)=0$. Together with $\sum_{v\in V}\bm b(v)=0$ and $V=U\sqcup U_1\sqcup\cdots\sqcup U_{k_U}$, we further deduce $\sum_{v\in U}\bm b(v)=0$. Hence the opposite inclusion also holds. Consequently, every boundary subspace $B_\pi$ for $\pi\in\Pi(G)$ is indeed the intersection of some hyperplanes from $\mathcal{A}_{\Pi(G)}$. Namely, we have 
\[
\mathcal{A}_{\Pi(G)}\subseteq\big\{B_\pi:\pi\in\Pi(G)\big\}\subseteq L(\mathcal{A}_{\Pi(G)})
\]
and
\begin{equation}\label{Semilattice-Partition}
 \mathcal{A}_{\tilde{\Pi}(G)}\subseteq\big\{B_\pi:\pi\in\tilde{\Pi}(G)\big\}\subseteq L(\mathcal{A}_{\tilde{\Pi}(G)}).
\end{equation}
It is worth remarking that the second inclusion may be strict in general, as the following example shows.
\begin{example}{\rm
Let $K_4$ be the complete graph with vertices $v_1,v_2,v_3,v_4$. Consider the partition selection 
\[
\mathfrak{p}=\big\{\{v_1v_2,v_3v_4\},\{v_1v_3,v_2v_4\},\{v_1v_4,v_2v_3\}\big\}.
\]
Let $\mathbb{F}$ be a finite field of characteristic $2$. Then $X=\bigcap_{\pi\in\mathfrak{p}}B_\pi=\mathbb{F}\bm 1$ is a flat of $\mathcal{A}_{\Pi(K_4)}$. Note that for any partition $\pi\in\Pi(G)$ with $k$ blocks, the corresponding boundary subspace $B_\pi$ has dimension $4-k$. Suppose that $X=B_\pi$ for some $\pi\in\Pi(K_4)$. Then $\pi$ must have three blocks, i.e., $\pi=\big\{\{v_{i_1}\},\{v_{i_2}\},\{v_{i_3},v_{i_4}\}\big\}$. It is clear that $B_\pi\ne X$. Therefore, $X$ is a flat of $\mathcal{A}_{\Pi(K_4)}$ but not of the form $B_\pi$ for any $\pi\in\Pi(K_4)$.
}
\end{example}

In order to obtain \autoref{Classification}, we give a characterization of the selections $\Pi_{\bm b}$ and $\tilde{\Pi_{\bm b}}$, based on the intersection posets $L(\mathcal{A}_{\tilde{\Pi}(G)})$ and $L(\mathcal{A}_{\Pi(G)})$.

\begin{lemma}\label{thm:equivalence condition for tilde}
Let $\bm b_1,\bm b_2\in B(G,\mathbb{F})$. Then $\bm b_1,\bm b_2\in M(\mathcal{A}_{\tilde{\Pi}(G)}/\tilde{X})$ for some flat $\tilde{X}\in L(\mathcal{A}_{\tilde{\Pi}(G)})$ if and only if 
$\tilde{\Pi}_{\bm b_1}=\tilde{\Pi}_{\bm b_2}$.
\end{lemma}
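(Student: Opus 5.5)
The plan is to reduce both sides of the equivalence to the same combinatorial datum: the set of hyperplanes of $\mathcal{A}_{\tilde{\Pi}(G)}$ that contain a given boundary. By the decomposition \eqref{DA} applied to $\mathcal{A}_{\tilde{\Pi}(G)}$ in $B(G,\mathbb{F})$, each $\bm b\in B(G,\mathbb{F})$ lies in $M(\mathcal{A}_{\tilde{\Pi}(G)}/\tilde{X}_{\bm b})$ for a unique flat. By \eqref{Flat}, this flat is
\[
\tilde{X}_{\bm b}=\bigcap_{H\in\mathcal{A}_{\tilde{\Pi}(G)},\,\bm b\in H}H .
\]
So $\bm b_1,\bm b_2$ lie in a common $M(\mathcal{A}_{\tilde{\Pi}(G)}/\tilde{X})$ if and only if $\tilde{X}_{\bm b_1}=\tilde{X}_{\bm b_2}$.

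I would first show that $\tilde{X}_{\bm b_1}=\tilde{X}_{\bm b_2}$ holds if and only if $\bm b_1$ and $\bm b_2$ lie on exactly the same hyperplanes of $\mathcal{A}_{\tilde{\Pi}(G)}$. One direction is immediate. For the other, suppose $\tilde{X}_{\bm b_1}=\tilde{X}_{\bm b_2}$ and let $H$ be a hyperplane with $\bm b_1\in H$. Then $\bm b_2\in\tilde{X}_{\bm b_2}=\tilde{X}_{\bm b_1}\subseteq H$.

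It then remains to show that $\tilde{\Pi}_{\bm b}$ determines, and is determined by, the set $\{H\in\mathcal{A}_{\tilde{\Pi}(G)}:\bm b\in H\}$.

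\begin{itemize}
\item \emph{$\tilde{\Pi}_{\bm b}$ determines the hyperplane set.} Every hyperplane of $\mathcal{A}_{\tilde{\Pi}(G)}$ is $B_\pi$ for some $\pi\in\tilde{\Pi}(G)$ with $|\pi|=|\pi(E(G))|+1$. Hence $\bm b\in B_\pi$ if and only if $\pi\in\tilde{\Pi}_{\bm b}$, so the hyperplane set is read off directly from $\tilde{\Pi}_{\bm b}$.
\item \emph{The hyperplane set determines $\tilde{\Pi}_{\bm b}$.} I would use the intersection formula proved just above:
\[
B_\pi=\bigcap_{U}\bigcap_{j}B_{\pi_{U,j}} .
\]
Then $\bm b\in B_\pi$ if and only if $\bm b\in B_{\pi_{U,j}}$ for every $U$ and $j$. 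So membership $\pi\in\tilde{\Pi}_{\bm b}$ is decided by which hyperplanes contain $\bm b$.
\end{itemize}

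The main obstacle is the second bullet. For $\pi\in\tilde{\Pi}(G)$, each $B_{\pi_{U,j}}$ must lie in the \emph{reduced} arrangement, i.e.\ $\pi_{U,j}\in\tilde{\Pi}(G)$. Equivalently, no bond $E[U,U_j]$ can be a single cut-edge. I expect to handle this as follows. Since $\pi\in\tilde{\Pi}(G)$, both ends of every cut-edge lie in a common block of $\pi$, so no cut-edge joins a block $U$ to $V\smallsetminus U$. Then $E[U,U_j]$ contains no cut-edge. It is a bond, so it is nonempty and consists of cycle-edges only. Since a bond is a single edge exactly when that edge is a cut-edge, all cut-edges remain inside blocks of $\pi_{U,j}$, which gives $\pi_{U,j}\in\tilde{\Pi}(G)$.

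Hence \eqref{Semilattice-Partition} holds with every $B_\pi$ ($\pi\in\tilde{\Pi}(G)$) an intersection of hyperplanes of $\mathcal{A}_{\tilde{\Pi}(G)}$. Combining the two bullets with the first step gives
\[
\tilde{X}_{\bm b_1}=\tilde{X}_{\bm b_2}\iff\tilde{\Pi}_{\bm b_1}=\tilde{\Pi}_{\bm b_2},
\]
which is the lemma.
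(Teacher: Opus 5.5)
Your proof is correct and follows essentially the same route as the paper. Both arguments rest on two facts. First, the flat $\tilde{X}$ with $\bm b\in M(\mathcal{A}_{\tilde{\Pi}(G)}/\tilde{X})$ is $\bigcap_{H\ni\bm b}H$. Second, by \eqref{Semilattice-Partition}, every $B_\pi$ with $\pi\in\tilde{\Pi}(G)$ is an intersection of hyperplanes $B_{\pi_{U,j}}$ of the reduced arrangement. Together these let you pass between $\tilde{\Pi}_{\bm b}$ and the set of hyperplanes containing $\bm b$. Your explicit check that $E[U,U_j]$ contains no cut-edge, so that $\pi_{U,j}\in\tilde{\Pi}(G)$, fills in a step the paper only asserts when it states \eqref{Semilattice-Partition}.
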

\begin{proof}
Note that if $\bm b\in M(\mathcal{A}_{\tilde{\Pi}(G)}/\tilde{X})$, then $\tilde{X}$ is the inclusion-minimal flat containing $\bm b$. The minimality of $\tilde{X}$ directly implies that 
\[
\bm b\in B_\pi\Longleftrightarrow \tilde{X}\subseteq B_\pi,\quad\forall\, \pi\in\tilde{\Pi}(G).
\]
Thus, if $\bm b_1,\bm b_2\in M(\mathcal{A}_{\tilde{\Pi}(G)}/\tilde{X})$  for some flat $\tilde{X}\in L(\mathcal{A}_{\tilde{\Pi}(G)})$, then we deduce
\[
\tilde{\Pi}_{\bm b_1}=\big\{\pi\in\tilde{\Pi}(G):\tilde{X}\subseteq B_\pi\big\}=\tilde{\Pi}_{\bm b_2}.
\]
Conversely, if $\tilde{\Pi}_{\bm b_1}=\tilde{\Pi}_{\bm b_2}$, then we have
\[
\{B_\pi\in\mathcal{A}_{\tilde{\Pi}(G)}:\bm b_1\in B_\pi\}\subseteq \{B_\pi:\pi\in\tilde{\Pi}_{\bm b_1}\}=\{B_\pi:\pi\in\tilde{\Pi}_{\bm b_2}\}\supseteq \{B_\pi\in\mathcal{A}_{\tilde{\Pi}(G)}:\bm b_2\in B_\pi\}.
\]
Together with \eqref{Flat} and \eqref{Semilattice-Partition}, we conclude that the inclusion-minimal flats $\tilde{X}_{\bm b_1}$ and $\tilde{X}_{\bm b_2}$ of $\mathcal{A}_{\tilde{\Pi}(G)}$ that contain $\bm b_1$ and $\bm b_2$ respectively, satisfy the following relation:
\begin{equation*}
\tilde{X}_{\bm b_1}=\bigcap_{B_\pi\in\mathcal{A}_{\tilde{\Pi}(G)},\,\bm b_1\in B_\pi}B_\pi=\bigcap_{\pi\in\tilde{\Pi}_{\bm b_1}}B_\pi=\bigcap_{\pi\in\tilde{\Pi}_{\bm b_2}}B_\pi=\bigcap_{B_\pi\in\mathcal{A}_{\tilde{\Pi}(G)},\,\bm b_2\in B_\pi}B_\pi=\tilde{X}_{\bm b_2}.
\end{equation*}
Consequently, $\bm b_1,\bm b_2\in M(\mathcal{A}_{\tilde{\Pi}(G)}/\tilde{X})$ with $\tilde{X}=\tilde{X}_{\bm b_1}=\tilde{X}_{\bm b _2}$. This completes the proof.
\end{proof}

By an argument similar to that in the proof of \autoref{thm:equivalence condition for tilde}, the proof of \autoref{thm:equivalence condition} follows straightforwardly, and we therefore omit its detailed proof.
\begin{lemma}\label{thm:equivalence condition}
Let $\bm b_1,\bm b_2\in B(G,\mathbb{F})$. Then $\bm b_1,\bm b_2\in M(\mathcal{A}_{\Pi(G)}/X)$  for some flat $X\in L(\mathcal{A}_{\Pi(G)})$ if and only if $\Pi_{\bm b_1}=\Pi_{\bm b_2}$.
\end{lemma}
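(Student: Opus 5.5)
We mirror the proof of \autoref{thm:equivalence condition for tilde}, replacing $\tilde{\Pi}(G)$ by $\Pi(G)$ and $\mathcal{A}_{\tilde{\Pi}(G)}$ by $\mathcal{A}_{\Pi(G)}$ throughout. The two facts we need are the decomposition \eqref{DA} with the formula \eqref{Flat} for the minimal flat $X_{\bm b}$ containing a point, and the inclusions
\[
\mathcal{A}_{\Pi(G)}\subseteq\big\{B_\pi:\pi\in\Pi(G)\big\}\subseteq L(\mathcal{A}_{\Pi(G)})
\]
established just before \eqref{Semilattice-Partition}. The latter says that every $B_\pi$ with $\pi\in\Pi(G)$ is an intersection of hyperplanes of $\mathcal{A}_{\Pi(G)}$.

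For the forward direction, suppose $\bm b_1,\bm b_2\in M(\mathcal{A}_{\Pi(G)}/X)$. Then $X$ is the inclusion-minimal flat containing each $\bm b_i$. Since every $B_\pi$ is itself a flat, minimality gives, for every $\pi\in\Pi(G)$,
\[
\bm b_i\in B_\pi\iff X\subseteq B_\pi .
\]
The implication from left to right holds because $B_\pi\cap X$ would otherwise be a strictly smaller flat containing $\bm b_i$. The reverse implication is trivial. Hence
\[
\Pi_{\bm b_1}=\{\pi\in\Pi(G):X\subseteq B_\pi\}=\Pi_{\bm b_2}.
\]

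For the converse, assume $\Pi_{\bm b_1}=\Pi_{\bm b_2}$. By \eqref{Flat},
\[
X_{\bm b_i}=\bigcap_{B_\pi\in\mathcal{A}_{\Pi(G)},\,\bm b_i\in B_\pi}B_\pi .
\]
We first show that this equals $\bigcap_{\pi\in\Pi_{\bm b_i}}B_\pi$. The hyperplanes $B_\pi$ of $\mathcal{A}_{\Pi(G)}$ containing $\bm b_i$ are among the $B_\pi$ with $\pi\in\Pi_{\bm b_i}$, which gives the inclusion $\supseteq$. Conversely, each $B_\pi$ with $\pi\in\Pi_{\bm b_i}$ is a flat containing $\bm b_i$, so it contains the minimal flat $X_{\bm b_i}$, which gives $\subseteq$. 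Since the right-hand intersection depends only on $\Pi_{\bm b_i}$, we get $X_{\bm b_1}=X_{\bm b_2}=:X$. By \eqref{DA}, both $\bm b_1$ and $\bm b_2$ then lie in $M(\mathcal{A}_{\Pi(G)}/X)$.

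The only point needing care is the claim that each $B_\pi$ is a flat. This was verified earlier using bonds $E[U,U_j]$, and that argument applies to every $\pi\in\Pi(G)$, not just to those in $\tilde{\Pi}(G)$, so nothing new is required.
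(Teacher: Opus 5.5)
Your proposal is correct and follows the same route as the paper, which omits this proof as a direct analogue of the proof of \autoref{thm:equivalence condition for tilde}: the forward direction uses minimality of the flat $X$ to get $\bm b_i\in B_\pi\iff X\subseteq B_\pi$, and the converse uses \eqref{Flat}, together with the fact that every $B_\pi$ is a flat, to identify $X_{\bm b_i}$ with $\bigcap_{\pi\in\Pi_{\bm b_i}}B_\pi$. Your explicit proof of both inclusions in that identity is a useful detail the paper leaves implicit.
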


With the above preparations, we now have enough tools to prove \autoref{Classification}.
\begin{proof}[Proof of \autoref{Classification}]
The proof of part (b) is analogous to part (a), hence we only prove part (a) and omit the proof of part (b). According to \autoref{thm:equivalence condition for tilde}, we have $\tilde{\Pi}_{\bm b_1}=\tilde{\Pi}_{\bm b_2}$, and hence $\varphi_G(\tilde{\Pi}_{\bm b_1},t)=\varphi_G(\tilde{\Pi}_{\bm b_2},t)$. It follows from \autoref{thm:nowhere-zero affine flow counting} that the second assertion in part (a) holds. It remains to prove that $L(\tilde{\mathcal{A}}_{E(G)}^{\bm b_1})\cong L(\tilde{\mathcal{A}}_{E(G)}^{\bm b_2})$.

For convenience, we assume that $i,j\in\{1,2\}$ and $\tilde{\Pi}_{\tilde{X}}=\tilde{\Pi}_{\bm b_1}=\tilde{\Pi}_{\bm b_2}$  throughout the proof. Given $S_1,S_2\subseteq E_{\rm cyc}(G)$ with $\pi(S_1^c),\pi(S_2^c)\in\tilde{\Pi}_{\tilde{X}}$, we have that $H_{S_i}^{\bm b_j}\neq\emptyset$ and 
\begin{equation}\label{eq:containments}
H^{\bm b_i}_{S_1\cup S_2}=H^{\bm b_i}_{S_1}\cap H^{\bm b_i}_{S_2}\subseteq H^{\bm b_i}_{S_j}.
\end{equation}
If $H^{\bm b_1}_{S_1}\subseteq H^{\bm b_1}_{S_2}$, then $H^{\bm b_1}_{S_1\cup S_2}=H^{\bm b_1}_{S_1}\neq\emptyset$. It follows that $\dim H^{\bm b_1}_{S_1\cup S_2}=\dim H^{\bm b_1}_{S_1}$, and $H^{\bm b_2}_{S_1\cup S_2}\neq\emptyset$ since $\tilde{\Pi}_{\bm b_1}=\tilde{\Pi}_{\bm b_2}$. Combining \eqref{eq:dimension of stratum}, we obtain $\dim H^{\bm b_2}_{S_1\cup S_2}=\dim H^{\bm b_2}_{S_1}$. Together with \eqref{eq:containments}, we deduce $H^{\bm b_2}_{S_1\cup S_2}=H^{\bm b_2}_{S_1}$, and hence $H^{\bm b_2}_{S_1}\subseteq H^{\bm b_2}_{S_2}$. Symmetrically, if $H^{\bm b_2}_{S_1}\subseteq H^{\bm b_2}_{S_2}$, then $H^{\bm b_1}_{S_1}\subseteq H^{\bm b_1}_{S_2}$. In conclusion, we have the following equivalence:
\[
H^{\bm b_1}_{S_1}\subseteq H^{\bm b_1}_{S_2}\iff H^{\bm b_2}_{S_1}\subseteq H^{\bm b_2}_{S_2}.
\]
By symmetry, the following equivalence naturally holds:
\[
H^{\bm b_1}_{S_1}=H^{\bm b_1}_{S_2}\iff H^{\bm b_2}_{S_1}=H^{\bm b_2}_{S_2}.
\]
Note that $L(\tilde{\mathcal{A}}_{E(G)}^{\bm b_i})$ consists of all affine subspaces $H_S^{\bm b_i}$ with $\pi(S^c)\in\tilde{\Pi}_{\tilde{X}}$. Consequently, a map sending $H^{\bm b_1}_S$ to $H^{\bm b_2}_S$, for $\pi(S^c)\in\tilde{\Pi}_{\tilde{X}}$, is an order-preserving bijection between $L(\tilde{\mathcal{A}}_{E(G)}^{\bm b_1})$ and $L(\tilde{\mathcal{A}}_{E(G)}^{\bm b_2})$. Therefore, $L(\tilde{\mathcal{A}}_{E(G)}^{\bm b_1})\cong L(\tilde{\mathcal{A}}_{E(G)}^{\bm b_2})$. This completes the proof.
\end{proof}

\subsection{Proof of \autoref{thm:comparison}}\label{Sec4-2}
A further goal of this section is to study the properties of coefficients of affine flow polynomials. The investigation of the unimodality and log-concavity of the coefficients of  characteristic polynomials has become a central theme in graph theory, matroid theory and hyperplane arrangement theory. Specifically, the unimodality of the sequence of unsigned coefficients of the characteristic polynomial was implicit in Rota \cite{Rota1971} and explicit in Heron \cite{Heron1972}, as a generalization of an earlier graph-theoretic conjecture of Read \cite{Read1968}. Subsequently, Welsh \cite{Welsh1976} conjectured that this sequence is log-concave. This is commonly known as the Rota-Heron-Welsh Conjecture and generalizes Hoggar's conjecture \cite{Hoggar1974} for chromatic polynomials. These conjectures have been confirmed by Huh et al. \cite{AHE2018,Huh2012,Huh-Katz2012}. As a direct result, we conclude the following corollary by \autoref{thm:nowhere-zero affine flow counting} and \autoref{thm:nowhere-zero affine flow counting1}.
\begin{corollary}
Let $\bm b\in B(G,\mathbb{F})$. Then,  the coefficients of the affine flow polynomial $\varphi_G(\tilde{\Pi}_{\bm b},t)$  are nonzero and alternate in sign, and the sequence of $w_i(\tilde{\Pi}_{\bm b})$ is unimodal and log-concave.
\end{corollary}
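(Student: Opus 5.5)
The corollary follows by transferring known facts about characteristic polynomials of hyperplane arrangements to $\varphi_G(\tilde{\Pi}_{\bm b},t)$ through the identification $\varphi_G(\tilde{\Pi}_{\bm b},t)=\chi(\tilde{\mathcal{A}}_{E(G)}^{\bm b},t)$ of \autoref{thm:nowhere-zero affine flow counting}(b).

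\textbf{Reduce to a central arrangement.} By \autoref{prop:characterization of cosets}, $\tilde{\mathcal{A}}_{E(G)}^{\bm b}=\{H_e^{\bm b}:e\in E_{\rm cyc}(G)\}$ is a genuine hyperplane arrangement in the affine space $F(G,\bm b;\mathbb{F})$. Its intersection poset is
\[
L(\tilde{\mathcal{A}}_{E(G)}^{\bm b})=\{H_S^{\bm b}: S\subseteq E_{\rm cyc}(G),\ \bm b\in B_{\pi(S^c)}\},
\]
and by \eqref{eq:dimension of stratum} ranks are given by $n(S^c)$. This poset is a geometric semilattice: it is an order ideal (a lower set in the reverse-inclusion order) of the lattice of flats of the cycle matroid of $G\setminus E_{\rm cut}(G)$.

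The cleanest route is to \emph{cone} the arrangement. Introduce a homogenizing coordinate $x_0$ on $F(G,\bm b;\mathbb{F})\times\mathbb{F}$, replace each $H_e^{\bm b}$ by its homogenization, and add the hyperplane $x_0=0$. The result is a central arrangement $c\tilde{\mathcal{A}}$, whose characteristic polynomial satisfies the standard relation
\[
\chi(c\tilde{\mathcal{A}},t)=(t-1)\,\chi(\tilde{\mathcal{A}}_{E(G)}^{\bm b},t).
\]
Being central, $c\tilde{\mathcal{A}}$ has as its lattice of flats that of a representable matroid $M$ of rank $n(G)+1$, and $\chi(c\tilde{\mathcal{A}},t)$ is the characteristic polynomial of $M$.

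\textbf{Nonvanishing and sign alternation.} By Whitney's theorem combined with Rota's sign theorem for the Möbius function of a geometric lattice, the coefficients of $\chi(M,t)$ are nonzero and alternate in sign, provided $M$ is loopless. This holds: $x_0=0$ is a genuine hyperplane, and the homogenized $H_e^{\bm b}$ are genuine hyperplanes.

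Dividing by $t-1$ preserves both properties. Write $\chi(M,t)=\sum_i(-1)^ia_it^{N-i}$ with $N=n(G)+1$ and $a_i>0$. The quotient has unsigned coefficients $w_i=\sum_{j\le i}(-1)^{i-j}\cdots$; concretely $w_i=a_i-w_{i-1}$, which is the reduced characteristic polynomial. Positivity of $w_i$ is known; alternatively, it follows directly from the Möbius function of a geometric semilattice via Whitney's formula
\[
w_i=\sum_{\operatorname{rank}X=i}|\mu(\hat0,X)|,
\]
together with $\mu(\hat0,X)\neq0$ of alternating sign, since every interval $[\hat0,X]$ in the semilattice is a geometric lattice.

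\textbf{Log-concavity and unimodality.} Adiprasito--Huh--Katz \cite{AHE2018} prove that the reduced characteristic polynomial $\chi(M,t)/(t-1)$ of any matroid has log-concave unsigned coefficients. Applied to $M$, this gives exactly $w_i(\tilde{\Pi}_{\bm b})^2\ge w_{i-1}(\tilde{\Pi}_{\bm b})\,w_{i+1}(\tilde{\Pi}_{\bm b})$.

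Log-concavity of a positive sequence with no internal zeros implies unimodality. Positivity for $0\le i\le n(G)$ comes from the previous step, and $w_0=1$ by \autoref{thm:nowhere-zero affine flow counting}(a).

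\textbf{Main obstacle.} The delicate point is justifying the coning step over an arbitrary field $\mathbb{F}$ when $\tilde{\mathcal{A}}_{E(G)}^{\bm b}$ is affine, that is, when $\bm b\neq\bm 0$. The concern is that the intersection data must match: a nonempty intersection in the cone not containing the $x_0$-hyperplane should correspond exactly to a nonempty affine intersection $H_S^{\bm b}$.

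I would verify this directly from \eqref{eq:equivalence for coset intersection nonempty} and \eqref{eq:dimension of stratum}. If that proves awkward, the fallback is to cite the Adiprasito--Huh--Katz result for reduced characteristic polynomials, equivalently log-concavity for geometric semilattices. Finally, when $E_{\rm cut}^{\bm b}(G)=\emptyset$ the same argument covers $\varphi_G(\Pi_{\bm b},t)$ via \autoref{thm:nowhere-zero affine flow counting1}.
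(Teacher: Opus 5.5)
Your proposal is correct and takes the same route as the paper, which simply combines \autoref{thm:nowhere-zero affine flow counting}(b) with the standard sign-alternation fact and the Huh et al.\ log-concavity theorem. Your coning step usefully makes explicit what the paper leaves implicit: $\chi(\tilde{\mathcal{A}}_{E(G)}^{\bm b},t)$ is the reduced characteristic polynomial of a loopless matroid of rank $n(G)+1$, and that is exactly the form in which Adiprasito--Huh--Katz applies. (The rank claim, which you state without justification, holds because $H^{\bm b}_{E(G)\smallsetminus T}$ is a single point for any maximal spanning forest $T$.)

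One side remark is false, though you never use it. Since ${\rm codim}\,H_S^{\bm b}=|S|+r(S^c)-r(G)$, the relevant matroid is the bond (cographic) matroid, not the cycle matroid. Moreover, for $\bm b\neq\bm 0$ the poset $L(\tilde{\mathcal{A}}_{E(G)}^{\bm b})$ is not even an order ideal of that lattice of flats: a digon with $\bm b\neq\bm 0$ has two distinct point flats. Your argument only needs that each interval $[\hat0,X]$ is a geometric lattice, which does hold.
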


Most recently, Fu, Ren and Wang provided a combinatorial description for the unsigned coefficients of admissible assigning polynomials $\varphi_G(\alpha,t)$ in \cite[Theorem 1.5]{FRW2025} by introducing $\bm b$-compatible broken bonds. Subsequently, they further used this combinatorial interpretation to establish a unified order-preserving relation from $\{0,1\}$-assignings to assigning polynomials when both are naturally ordered. Motivated by their research, we provide an analogous comparison relation for the coefficients of affine flow polynomials in \autoref{thm:comparison}, based on the reduced boundary arrangement and the boundary arrangement. To show it, we need Whitney's celebrated Broken Circuit Theorem \cite{Whitney1932}, which is an important tool for computing the unsigned coefficients of characteristic polynomials. 

Let $\mathcal{A}=\{H_1,\ldots,H_m\}$ be a hyperplane arrangement in the $d$-dimensional vector space. A subset $S\subseteq[m]$ is {\em affinely independent} if  $\bigcap_{i\in S}H_i\ne\emptyset$ and ${\rm codim} \bigcap_{i\in S}H_i=|S|$. Similarly, $S$ is said to be {\em affinely dependent} if $\bigcap_{i\in S}H_i\ne\emptyset$ and ${\rm codim} \bigcap_{i\in S}H_i<|S|$. A minimal affinely dependent subset $S$ of $[m]$ is referred to as an {\em affine circuit} with respect to $\mathcal{A}$. Alternatively, $S$ is an affine circuit if and only if
\[
\bigcap_{i\in S}H_i\ne\emptyset\quad\And\quad {\rm codim} \bigcap_{i\in S}H_i={\rm codim} \bigcap_{i\in S\smallsetminus\{j\}}H_i=|S|-1,\quad\forall\, j\in S.
\]
Given a total order $\prec$ on $[m]$, an {\em affine broken circuit} is a subset of $[m]$ obtained from an affine circuit by deleting  the minimal element. For more information on affine broken circuits of hyperplane arrangements, we refer the reader to \cite{Forge-Zaslavsky2016,Orlik-Terao1992}.
\begin{theorem}[Affine NBC Theorem \cite{Orlik-Terao1992}]\label{affine-NBC} Let $\mathcal{A}=\{H_1,\ldots, H_m\}$ be a hyperplane arrangement in a $d$-dimensional vector space $V$. Write $\chi(\mathcal{A},t)$ as $\chi(\mathcal{A},t)=\sum_{i=0}^d(-1)^iw_i(\mathcal{A})t^{d-i}$. Then, every unsigned coefficient $w_i(\mathcal{A})$ equals the number of  affinely independent subsets of $[m]$ of size $i$ that contain no affine broken circuits.
\end{theorem}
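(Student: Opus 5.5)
The plan is to start from the subset expansion of the characteristic polynomial,
\[
\chi(\mathcal{A},t)=\sum_{B\subseteq[m],\ \bigcap_{i\in B}H_i\ne\emptyset}(-1)^{|B|}\,t^{\dim\bigcap_{i\in B}H_i},
\]
and cancel all terms coming from subsets that contain an affine broken circuit. This is done with a sign-reversing involution that preserves the intersection $\bigcap_{i\in B}H_i$, and hence the exponent of $t$. The surviving subsets will be exactly the affinely independent subsets containing no affine broken circuit. Such a set $B$ of size $i$ has $\dim\bigcap_{i\in B}H_i=d-i$ and sign $(-1)^i$, so it contributes $(-1)^it^{d-i}$. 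Collecting these terms gives $w_i(\mathcal{A})$ as the stated count.

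\textbf{Step 1 (localization).} Fix $B$ with $\bigcap_{i\in B}H_i\ne\emptyset$ and pick a point $p$ in this intersection. Let $\mathcal{A}_p=\{H_j: p\in H_j\}$; translating $p$ to the origin makes it a central arrangement, i.e.\ a representable matroid. Every subset of $\{j:p\in H_j\}$ has nonempty intersection, since it contains $p$. On such subsets, affine independence, affine circuits and codimension therefore coincide with the matroid notions of $\mathcal{A}_p$. In particular, matroid circuit elimination is available for circuits lying inside $\{j:p\in H_j\}$. The point of this step is to sidestep the fact that affine circuits in general do not satisfy elimination when intersections become empty.

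\textbf{Step 2 (the involution).} For $B$ containing some affine broken circuit, let $h(B)$ be the least index $h$ such that there is an affine circuit $C$ with $\min C=h$ and $C\smallsetminus\{h\}\subseteq B$. Define $\iota(B)=B\,\triangle\,\{h(B)\}$. The following checks are needed.
\begin{itemize}
\item[(i)] Since $C$ is an affine circuit, $\bigcap_{C\smallsetminus\{h\}}H_i\subseteq H_h$. Adding or removing $h$ therefore leaves $\bigcap_{i\in B}H_i$ unchanged, so the intersection stays nonempty, the dimension is preserved, and the sign flips.
\item[(ii)] The set $\iota(B)$ still contains $C\smallsetminus\{h\}$.
\item[(iii)] $h(\iota(B))=h(B)$.
\end{itemize}
Item (iii) is the main obstacle. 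Suppose $\iota(B)=B\cup\{h\}$ and some circuit $C'$ has $\min C'=h'<h$ and $C'\smallsetminus\{h'\}\subseteq B\cup\{h\}$ with $h\in C'$. Everything lies in $\{j: p\in H_j\}$, so Step 1 lets me eliminate $h$ between $C$ and $C'$. This yields a circuit $C''\subseteq (C\cup C')\smallsetminus\{h\}$ containing $h'$, where $h'$ is the minimum of $C\cup C'$. Then $C''\smallsetminus\{h'\}\subseteq B$, contradicting the minimality of $h$. The case $\iota(B)=B\smallsetminus\{h\}$ is immediate, because the candidate set of circuits only shrinks. So $\iota$ is a fixed-point-free involution on the sets containing a broken circuit, and their contributions cancel.

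\textbf{Step 3 (survivors).} Let $B$ have nonempty intersection and contain no affine broken circuit. If $B$ were affinely dependent, it would contain an affine circuit, since all its subsets have nonempty intersection. That circuit contains its own broken circuit, a contradiction. Hence $B$ is affinely independent, $\operatorname{codim}\bigcap_{i\in B}H_i=|B|$, and the contribution is $(-1)^{|B|}t^{d-|B|}$. Comparing coefficients of $t^{d-i}$ gives the theorem.
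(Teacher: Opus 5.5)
Your argument is correct. The paper does not prove this statement at all; it is quoted from Orlik--Terao, where the count of nbc sets is obtained from the Orlik--Solomon machinery (the Möbius function of $L(\mathcal{A})$ and the nbc basis of the Orlik--Solomon algebra) rather than from a direct cancellation.

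You instead give a self-contained combinatorial proof. It uses the "least broken-circuit minimum" involution familiar from Whitney's theorem for matroids. Your localization at a point $p\in\bigcap_{i\in B}H_i$ is the right device for restoring strong circuit elimination in the affine setting.

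Two small points in Step 2(iii) deserve to be written out explicitly:
\begin{itemize}
\item The claim that $C,C'\subseteq\{j:p\in H_j\}$ follows from $\bigcap_{i\in C\smallsetminus\{h\}}H_i\subseteq H_h$ together with $\bigcap_{i\in C'\smallsetminus\{h'\}}H_i\subseteq H_{h'}$, since $C\smallsetminus\{h\}\subseteq B$ and $C'\smallsetminus\{h'\}\subseteq B\cup\{h\}$.
\item The elimination you invoke is the strong form. It applies because $h'\in C'\smallsetminus C$, and it produces a circuit through $h'$.
\end{itemize}

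Compared with the cited route, your argument is elementary. It also works verbatim for indexed families in which distinct indices give the same hyperplane. That is exactly the situation in the paper's application to $\tilde{\mathcal{A}}_{E(G)}^{\bm b}$, where distinct edges can give equal $H_e^{\bm b}$.

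Moreover, because your involution preserves $\bigcap_{i\in B}H_i$, you actually prove the finer flat-by-flat identity
\[
\sum_{B:\,\bigcap_{i\in B}H_i=X}(-1)^{|B|}=(-1)^{\operatorname{codim}X}\,\#\Bigl\{B\ \text{affinely independent and NBC}:\ \bigcap_{i\in B}H_i=X\Bigr\},
\]
which is the nbc description of $\mu(V,X)$.

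The algebraic route is heavier, but it yields more structure: an explicit basis of the Orlik--Solomon algebra, and hence cohomological information about the complement over $\mathbb{C}$. None of that is needed for the coefficient count used in the paper.
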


We also need the following lemma, which shows that the collections of affinely independent subsets for different restricted arrangements $\mathcal{A}_{E(G)}^{\bm b}$ are identical.
\begin{lemma}\label{Independent}
Let $\bm b_1,\bm b_2\in B(G,\mathbb{F})$ and $S\subseteq E(G)$. If $H_S^{\bm b_1}\ne\emptyset$ and ${\rm codim}H_S^{\bm b_1}=|S|$, then $H_S^{\bm b_2}\ne\emptyset$ and ${\rm codim}H_S^{\bm b_2}=|S|$.
\end{lemma}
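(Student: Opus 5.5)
The plan is to reduce both conditions to a property of the edge set $S$ in the graph $G$ alone, independent of $\bm b$. First, when $H_S^{\bm b}\ne\emptyset$, \eqref{eq:dimension of stratum} gives $\dim H_S^{\bm b}=n(S^c)$. The ambient space $F(G,\bm b;\mathbb{F})$ has dimension $n(G)$, since it is a coset of the flow space $F(G,\bm 0;\mathbb{F})$ because $\bm b\in B(G,\mathbb{F})$. Hence the codimension of $H_S^{\bm b}$ inside $F(G,\bm b;\mathbb{F})$ is $n(G)-n(S^c)$, which depends only on $S$. Consequently the hypothesis ${\rm codim}\,H_S^{\bm b_1}=|S|$ translates into the purely graph-theoretic identity $n(E(G))-n(E(G)\smallsetminus S)=|S|$.

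Second, I will interpret this identity. Since $n(S^c)=|S^c|-r(S^c)$ and $n(G)=|E(G)|-r(G)$, the condition $n(G)-n(S^c)=|S|$ is equivalent to $r(S^c)=r(G)$. That is, deleting $S$ does not increase the number of components, so $c(G\setminus S)=c(G)$ and $\pi(S^c)=\pi(E(G))$ as partitions. (Indeed $\pi(S^c)$ is always finer than $\pi(E(G))$, so equal block counts force equality.)

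Third, I will show nonemptiness for $\bm b_2$ via \eqref{eq:equivalence for coset intersection nonempty}. We need $\bm b_2\in B_{\pi(S^c)}$. But $B_{\pi(S^c)}=B_{\pi(E(G))}=B(G,\mathbb{F})$, and $\bm b_2$ lies there by assumption. So $H_S^{\bm b_2}\ne\emptyset$, and the dimension formula \eqref{eq:dimension of stratum} then gives ${\rm codim}\,H_S^{\bm b_2}=n(G)-n(S^c)=|S|$ as well.

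The only delicate point is the bookkeeping in the first step. Codimension must be measured in $F(G,\bm b;\mathbb{F})$ rather than in $\mathbb{F}^{E(G)}$, and one must note that nonemptiness for $\bm b_1$ is what licenses the dimension formula. Once the condition is rewritten as $r(S^c)=r(G)$, the rest is immediate.
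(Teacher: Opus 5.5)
Your proposal is correct and follows essentially the same route as the paper. Both arguments use \eqref{eq:dimension of stratum} to turn ${\rm codim}\,H_S^{\bm b_1}=|S|$ into $n(G)-n(S^c)=|S|$, deduce $c(G\setminus S)=c(G)$ and hence $\pi(S^c)=\pi(E(G))$, and then conclude via \eqref{eq:equivalence for coset intersection nonempty} and the dimension formula. Phrasing the middle step as $r(S^c)=r(G)$ is only a cosmetic difference.
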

\begin{proof}
From \eqref{eq:dimension of stratum}, we have $\dim H_S^{\bm b_1}=n(S^c)$. Since ${\rm codim}H_S^{\bm b_1}=\dim F(G,\bm b_1;\mathbb{F})-\dim H_S^{\bm b_1}=|S|$, we deduce that $n(G)-n(S^c)=|S|$. Together with $n(S^c)=|E(G\setminus S)|-|V(G)|+c(G\setminus S)$, we further derive $c(G\setminus S)=c(G)$. Consequently, $|\pi(S^c)|=|\pi(E(G))|$. As $\pi(S^c)$ is a refinement of $\pi(E(G))$, we conclude that $\pi(S^c)=\pi(E(G))$. It follows that $\bm b_2\in B(G,\mathbb{F})=B_{\pi(E(G))}=B_{\pi(S^c)}$. So, we arrive at $H_S^{\bm b_2}\ne\emptyset$ by \eqref{eq:equivalence for coset intersection nonempty}. Applying \eqref{eq:dimension of stratum} again, we obtain $\dim H_S^{\bm b_2}=n(S^c)$, and hence ${\rm codim}H_S^{\bm b_2}=|S|$. This completes the proof.
\end{proof}

With the above preparations, we now proceed to prove \autoref{thm:comparison}.
\begin{proof}[Proof of \autoref{thm:comparison}]
The proof of part (b) is analogous to part (a), hence we only prove part (a) and omit the proof of part (b). For each $i=1,2$, let ${\rm NBC}(\tilde{\mathcal{A}}_{E(G)}^{\bm b_i})$ denote the set of affinely independent subsets of $\tilde{\mathcal{A}}_{E(G)}^{\bm b_i}$ that do not contain affine broken circuits. According to part (b) of \autoref{thm:nowhere-zero affine flow counting} and \autoref{affine-NBC}, it suffices to verify that 
\[
{\rm NBC}(\tilde{\mathcal{A}}_{E(G)}^{\bm b_1})\subseteq {\rm NBC}(\tilde{\mathcal{A}}_{E(G)}^{\bm b_2}).
\]
We prove this by contradiction. Suppose there exists an element $S_0\in {\rm NBC}(\tilde{\mathcal{A}}_{E(G)}^{\bm b_1})$ such that $S_0\notin{\rm NBC}(\tilde{\mathcal{A}}_{E(G)}^{\bm b_2})$. Note from \autoref{Independent} that for any $S\subseteq E_{\rm cyc}(G)$, $S$ is affinely independent with respect to $\tilde{\mathcal{A}}_{E(G)}^{\bm b_1}$ if and only if $S$ is affinely independent with respect to $\tilde{\mathcal{A}}_{E(G)}^{\bm b_2}$. Thus, $S_0$ is affinely independent with respect to $\tilde{\mathcal{A}}_{E(G)}^{\bm b_i}$ with $i=1,2$. Since $S_0\notin{\rm NBC}(\tilde{\mathcal{A}}_{E(G)}^{\bm b_2})$, there exists an element $e_0\in E_{\rm cyc}(G)\smallsetminus S_0$ such that $S_0\sqcup e_0$ contains an affine circuit $C_0$ with $e_0\in C_0$, and $C_0\smallsetminus \{e_0\}\subseteq S_0$ is an affine broken circuit of $\tilde{\mathcal{A}}_{E(G)}^{\bm b_2}$. Consequently, $H_{C_0}^{\bm b_2}\ne\emptyset$ and ${\rm codim}H_{C_0}^{\bm b_2}=|C_0|-1$. Hence, $\bm b_2\in B_{\pi(C_0^c)}$ via \eqref{eq:equivalence for coset intersection nonempty}. As $X_1\subseteq X_2$, every hyperplane $B_\pi\in\mathcal{A}_{\tilde{\Pi}(G)}$ containing $X_2$ necessarily contains $X_1$. It follows that  $\bm b_1\in B_{\pi(C_0^c)}$. Applying  \eqref{eq:equivalence for coset intersection nonempty} again, we obtain $H_{C_0}^{\bm b_1}\ne\emptyset$. According to \autoref{Independent}, $C_0$ is also an affine circuit of $\tilde{\mathcal{A}}_{E(G)}^{\bm b_1}$. Consequently, $C_0\smallsetminus \{e_0\}\subseteq S_0$ forms an affine broken circuit of $\tilde{\mathcal{A}}_{E(G)}^{\bm b_1}$, which contradicts the assumption $S_0\in {\rm NBC}(\tilde{\mathcal{A}}_{E(G)}^{\bm b_1})$. Therefore, the desired inclusion relation holds. 
\end{proof}
\section{Proof of \autoref{Dec1}}\label{Sec5}
This section is devoted to proving \autoref{Dec1}. To this end, we first introduce valuation theory on characteristic polynomials. Roughly speaking, the characteristic polynomial of a hyperplane arrangement measures the size of its complement. Let $V$ be a vector space over an infinite field $\mathbb{F}$, and $B(V)$ be the Boolean algebra generated by all affine subspaces of $V$.  A {\em valuation} $\nu$ on $B(V)$ is a set function from $B(V)$ to a commutative ring $R$ such that
\[
\nu(\emptyset)=0\quad\And\quad \nu(X)+\nu(Y)=\nu(X\cap Y)+\nu(X\cup Y), \For X,Y,X\cup Y\in B(V).
\]
Ehrenborg and Readdy in 1998 obtained a unique valuation (translation-invariant) $\nu: B(V)\to \mathbb{Z}[t]$ satisfying $\nu(W)=t^{\dim (W)}$ for any nonempty affine subspace $W$ of $V$ in \cite[Theorem 2.1]{Ehrenborg-Readdy1998},  which we call the {\em characteristic valuation} on $V$. They further showed that the characteristic polynomial of  a hyperplane arrangement $\mathcal{A}$ can be viewed as a valuation of its complement $M(\mathcal{A})$. More details on the valuation theory of characteristic polynomials can be found in \cite{Bjorner1997, Chen2000}.
\begin{theorem}{\rm\cite[Theorem 2.3]{Ehrenborg-Readdy1998}}\label{thm:valuation and characteristic polynomial}
Let $\mathcal{A}$ be a subspace arrangement in a vector space $V$ over an infinite field $\mathbb{F}$, and $\nu$ be the characteristic valuation on $V$. Then the characteristic polynomial $\chi(\mathcal{A},t)$ is given by
\[
\chi(\mathcal{A},t)=\nu\big(M(\mathcal{A})\big).
\]
\end{theorem}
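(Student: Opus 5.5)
The plan is to compute $\nu\big(M(\mathcal{A})\big)$ directly from the two defining properties of a valuation, via inclusion--exclusion over the members of $\mathcal{A}$, and to observe that the resulting sum is term by term the defining sum of $\chi(\mathcal{A},t)$. Here $\chi(\mathcal{A},t)$ for a subspace arrangement means the same subset expansion as for hyperplanes: the sum over $\mathcal{B}\subseteq\mathcal{A}$ with $\bigcap_{H\in\mathcal{B}}H\ne\emptyset$ of $(-1)^{|\mathcal{B}|}t^{\dim\bigcap_{H\in\mathcal{B}}H}$. Write $\mathcal{A}=\{H_1,\ldots,H_m\}$. Every $H_i$, every intersection of some $H_i$'s, and every union of such sets lies in the Boolean algebra $B(V)$, so $\nu$ is defined on all of them.

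\textbf{Step 1 (complement).} Since $V=M(\mathcal{A})\sqcup\bigcup_i H_i$ is a disjoint union in $B(V)$, applying the defining identity with $X\cap Y=\emptyset$ and $\nu(\emptyset)=0$ gives
\[
\nu\big(M(\mathcal{A})\big)=\nu(V)-\nu\Big(\bigcup_{i=1}^m H_i\Big)=t^{\dim V}-\nu\Big(\bigcup_{i=1}^m H_i\Big).
\]

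\textbf{Step 2 (inclusion--exclusion).} I will prove, by induction on $m$, that for any $X_1,\ldots,X_m\in B(V)$,
\[
\nu\Big(\bigcup_{i=1}^m X_i\Big)=\sum_{\emptyset\ne S\subseteq[m]}(-1)^{|S|+1}\,\nu\Big(\bigcap_{i\in S}X_i\Big).
\]
The case $m=1$ is trivial and $m=2$ is the defining identity. For the inductive step, put $Y=X_1\cup\cdots\cup X_{m-1}$ and apply the two-set identity to $Y$ and $X_m$. Then use distributivity, $Y\cap X_m=\bigcup_{i<m}(X_i\cap X_m)$, and apply the induction hypothesis to both $Y$ and this union of $m-1$ sets. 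Regrouping the terms according to whether $m\in S$ gives the formula.

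\textbf{Step 3 (evaluate and assemble).} Each $\bigcap_{i\in S}H_i$ is either empty, contributing $0$, or a nonempty affine subspace, contributing $t^{\dim}$. Substituting into Step 1 and absorbing $t^{\dim V}$ as the $S=\emptyset$ term yields
\[
\nu\big(M(\mathcal{A})\big)=\sum_{S\subseteq[m],\ \bigcap_{i\in S}H_i\ne\emptyset}(-1)^{|S|}\,t^{\dim\bigcap_{i\in S}H_i},
\]
which is $\chi(\mathcal{A},t)$.

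The only genuine point to check is Step 2: the valuation identity is available only for sets in $B(V)$, so I need all intermediate unions and intersections to stay in $B(V)$. This holds because $B(V)$ is a Boolean algebra. Existence and uniqueness of $\nu$ are taken from \cite[Theorem 2.1]{Ehrenborg-Readdy1998}, and the infinitude of $\mathbb{F}$ enters only there.
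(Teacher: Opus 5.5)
Your argument is correct and complete. The paper gives no proof of this statement; it quotes it from \cite{Ehrenborg-Readdy1998}, so there is no in-paper argument to compare with.

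The standard proof works on the intersection poset rather than on subsets of $\mathcal{A}$. The stratification \eqref{DA} gives $t^{\dim X}=\sum_{Y\in L(\mathcal{A}),\,Y\subseteq X}\nu\big(M(\mathcal{A}/Y)\big)$ for every flat $X$. M\"obius inversion then yields $\nu(M(\mathcal{A}))=\sum_{X\in L(\mathcal{A})}\mu(V,X)t^{\dim X}$. Equivalently, one integrates the pointwise identity $1_{M(\mathcal{A})}=\sum_{X\in L(\mathcal{A})}\mu(V,X)1_X$. This is the M\"obius form of $\chi$ customary for subspace arrangements.

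Your Step 2 is the integrated version of $1_{M(\mathcal{A})}=\prod_i(1-1_{H_i})=\sum_{S}(-1)^{|S|}1_{\bigcap_{i\in S}H_i}$, derived directly from the modular law. You therefore need neither M\"obius functions nor linearity of the integral on simple functions. You also land directly on the subset-sum form, which is exactly how the paper defines $\chi$.

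That choice matters in one case the paper actually uses: when the ambient space is itself a member of $\mathcal{A}$. This happens for $\mathcal{A}_{E(G)}^{\bm b}$ with $E_{\rm cut}^{\bm b}(G)\ne\emptyset$, where the paper asserts $\chi=0$ and then feeds this into the proof of \autoref{Dec1}. There your subset sum vanishes and matches $\nu(\emptyset)=0$. The M\"obius form does not, since it only sees $L(\mathcal{A})=L(\mathcal{A}\smallsetminus\{V\})$.

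When $V\notin\mathcal{A}$ and one prefers the M\"obius definition, your proof needs one extra identity: $\mu(V,X)=\sum_{\mathcal{B}\subseteq\mathcal{A},\,\bigcap\mathcal{B}=X}(-1)^{|\mathcal{B}|}$. It holds because $\sum_{Y\in L(\mathcal{A}),\,Y\supseteq X}\,\sum_{\bigcap\mathcal{B}=Y}(-1)^{|\mathcal{B}|}=\sum_{\mathcal{B}\subseteq\mathcal{A}_X}(-1)^{|\mathcal{B}|}$, with $\mathcal{A}_X:=\{H\in\mathcal{A}:X\subseteq H\}$. This sum is $1$ for $X=V$ and $0$ for every other flat, since then $\mathcal{A}_X\neq\emptyset$.
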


For any subset $X$ of $V$ over an infinite field $\mathbb{F}$,  let $1_X$ denote the indicator function of the set $X$, i.e., $1_X(x)=1$ if $x\in X$, and $1_X=0$ otherwise. A function $f:V\rightarrow\mathbb{Z}[t]$ is said to be {\em simple} if $f$ can be written as a linear combination of indicator functions of sets from the Boolean algebra $B(V)$, that is,  $f=\sum_{i=1}^l c_i 1_{X_i}$ for some $c_i\in\mathbb{Z}[t]$ and  $X_i\in B(V)$.  Then the characteristic valuation $\nu$ defines an integral
\[
\int f{\rm d}\nu := \sum_{i=1}^l c_i \nu({X_i})
\]
for each simple function $f=\sum_{i=1}^l c_i 1_{X_i}$, where $X_i\in B(V)$. The following proposition is a ``Fubini-type" theorem.
\begin{proposition}[\cite{Ehrenborg-Readdy1998}, Proposition 2.2]\label{thm:Fubini-type theorem}
Let $\nu_1,\nu_2$ and $\nu$ be the characteristic valuations on vector spaces $V_1,V_2$ and $V_1\times V_2$, respectively. Let $f(x,y)$ be a
simple function on $V_1\times V_2$. Then for each $x\in V_1$, $f_x(y):=f(x,y)$ is a simple function on $V_2$, and $\int f_x(y){\rm d}\nu_2$
is a simple function in terms of $x$ on $V_1$. Moreover,
\[
\int f(x,y){\rm d}\nu=\int\int f_x(y){\rm d}\nu_2{\rm d}\nu_1.
\]
\end{proposition}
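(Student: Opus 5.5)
The plan is to use linearity to reduce the statement to indicator functions of affine subspaces of $V_1\times V_2$, and then check that single case by a dimension count.

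\textbf{Reduction.} Since $\nu$ is a valuation on $B(V)$, it extends to a well-defined linear functional on simple functions; this is Groemer's extension theorem, which is what makes $\int f\,{\rm d}\nu$ independent of the chosen representation of $f$. I will take this well-definedness as part of the setup. Every set in the Boolean algebra $B(V_1\times V_2)$ is a finite union of sets of the form $W\smallsetminus(W_1\cup\cdots\cup W_k)$ with $W,W_j$ affine subspaces. An intersection of affine subspaces is again an affine subspace or empty. Hence, by inclusion–exclusion, the indicator of any element of $B(V_1\times V_2)$ is a $\mathbb{Z}$-linear combination of indicators $1_W$ of affine subspaces $W$. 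So every simple function can be written as $f=\sum_i c_i1_{W_i}$ with the $W_i$ nonempty affine subspaces. Both sides of the claimed identity are linear in $f$, and the slicing $f\mapsto f_x$ is also linear. It therefore suffices to treat $f=1_W$ for a single nonempty affine subspace $W\subseteq V_1\times V_2$.

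\textbf{The case $f=1_W$.} For $x\in V_1$ we have $f_x=1_{W_x}$, where $W_x=\{y\in V_2:(x,y)\in W\}$. This fiber is either empty or an affine subspace of $V_2$, so $f_x$ is simple. Let $p:V_1\times V_2\to V_1$ be the projection. Then $p(W)$ is an affine subspace of $V_1$, and $W_x\neq\emptyset$ exactly when $x\in p(W)$. For such $x$, $W_x$ is a translate of $K:=\{y:(0,y)\in W-W\}$, the kernel of $p$ restricted to the direction space of $W$. By rank–nullity, $\dim W_x=\dim W-\dim p(W)$, which does not depend on $x$. Therefore
\[
\int f_x(y)\,{\rm d}\nu_2=t^{\dim W-\dim p(W)}\,1_{p(W)}(x),
\]
which is a simple function on $V_1$. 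Integrating against $\nu_1$ gives
\[
t^{\dim W-\dim p(W)}\,t^{\dim p(W)}=t^{\dim W}=\nu(W)=\int 1_W\,{\rm d}\nu .
\]
This proves the identity for $f=1_W$, and linearity finishes the proof.

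\textbf{Main obstacle.} The delicate point is the reduction step, not the computation. One must make sure that the representation $f=\sum c_i1_{W_i}$ can always be chosen with the $W_i$ affine subspaces. One must also make sure that both iterated integrals are independent of the representation, so that linearity is legitimate. Independence follows because the inner integral, for each fixed $x$, is $\int f_x\,{\rm d}\nu_2$, which is well defined by the valuation property on $V_2$. This in turn makes the outer integrand a well-defined simple function, whose integral is well defined by the valuation property on $V_1$. The remaining facts — that fibers of affine subspaces are affine, and that their dimension is constant over the projection — are routine linear algebra.
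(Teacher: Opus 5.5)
Your proof is correct and complete. The paper gives no proof of its own here; it only cites Ehrenborg--Readdy \cite{Ehrenborg-Readdy1998}. Your route is the standard one: reduce by linearity to $f=1_W$ for a nonempty affine subspace $W$, then use $\int 1_W(x,y)\,{\rm d}\nu_2=t^{\dim W-\dim p(W)}1_{p(W)}(x)$ and integrate over $V_1$. (You do not need Groemer's theorem: $\nu$ is already a finitely additive function on the whole Boolean algebra, so the integral of simple functions is well defined by passing to a common refinement.)
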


By applying valuation theory, we give a proof of \autoref{Dec1} below.
\begin{proof}[Proof of part (a) of \autoref{Dec1}]
Let $V_1=\mathbb{F}^{E(G)}$ and $V_0=\mathbb{F}^{V(G)}$. Consider the graph $G(\partial)$ of the boundary operator, which is a subspace of $V_1\times V_0$ consisting of pairs $(\bm c,\bm b)$ such that $\partial\bm c=\bm b$. Namely,
\[
G(\partial):=\big\{(\bm c,\partial\bm c):\bm c\in V_1\big\}\subseteq V_1\times V_0.
\]
For each edge $e\in E(G)$, we denote by $G_e(\partial)$ the hyperplane of $G(\partial)$ consisting of pairs $(\bm c,\bm b)$ such that $\partial\bm c=\bm b$ with $\bm c(e)=0$. The set $\mathcal{A}_\partial:=\big\{G_e(\partial):e\in E(G)\big\}$ forms a hyperplane arrangement of $G(\partial)$. It is obvious that the complement $M(\mathcal{A}_\partial)=G(\partial)\smallsetminus\bigcup_{e\in E(G)}G_e(\partial)$ of $\mathcal{A}_\partial$ is an element of $\mathcal{B}(V_1\times V_0)$, and hence its indicator function $f:=1_{M(\mathcal{A}_\partial)}$ is a $\mathbb{Z}[t]$-valued simple function on $V_1\times V_0$. For any $\bm b\in V_0$, the function $f_{\bm b}:=f(\cdot,\bm b)$ is a simple function on $V_1$ such that $f_{\bm b}=1_{M(\mathcal{A}_{E(G)}^{\bm b})}$ if $\bm b\in B(G,\mathbb{F})$, and $f_{\bm b}=0$ otherwise. Note from \eqref{DA} that the boundary space has the set partition:  $B(G,\mathbb{F})=\bigsqcup_{X\in L(\mathcal{A}_{\Pi(G)})}M(\mathcal{A}_{\Pi(G)}/X)$. This implies that for each $\bm b\in B(G,\mathbb{F})$, there is a unique flat $X_{\bm b}$ of $\mathcal{A}_{\Pi(G)}$ such that $\bm b\in M(\mathcal{A}_{\Pi(G)}/X_{\bm b})$. According to \autoref{thm:valuation and characteristic polynomial} and part (b) in \autoref{thm:nowhere-zero affine flow counting1}, for fixed $\bm b\in B(G,\mathbb{F})$, we have
\[
\int f_{\bm b}\,d\nu_1=\nu\big(M(\mathcal{A}_{E(G)}^{\bm b})\big)=\chi(\mathcal{A}_{E(G)}^{\bm b},t)=\varphi(\Pi_{\bm b},t)=\varphi_G(X_{\bm b},t).
\]
Consequently, $\int f_{\bm b}\,d\nu_1$ is constant on each complement  $M(\mathcal{A}_{\Pi(G)}/X)$ and vanishes outside $B(G,\mathbb{F})$. Thus, $\int f_{\bm b}\,d\nu_1$ can be written as
\[
\int f_{\bm b}\,d\nu_1=\sum_{X\in L(\mathcal{A}_{\Pi(G)})}\varphi_G(X,t)1_{M(\mathcal{A}_{\Pi(G)}/X)}.
\]
Integrating over $V_0$, we further obtain
\begin{align*}
\int\!\int f_{\bm b}\,d\nu_1d\nu_0
&=\sum_{X\in L(\mathcal{A}_{\Pi(G)})}\varphi_G(X,t)\,\nu_0\big(M(\mathcal{A}_{\Pi(G)}/X)\big)\notag\\
&=\sum_{X\in L(\mathcal{A}_{\Pi(G)})} \varphi_G(X,t)\,\chi(\mathcal{A}_{\Pi(G)}/X,t).
\end{align*}
On the other hand, applying \autoref{thm:valuation and characteristic polynomial} directly yields
\[
\int 1_{M(\mathcal{A}_\partial)}\,d\nu=\nu\big(M(\mathcal{A}_\partial)\big)=\chi(\mathcal{A}_\partial,t).
\]
The intersection poset of the hyperplane arrangement $\mathcal{A}_\partial$ is isomorphic to the Boolean lattice $(2^{|E(G)|},\subseteq)$ . Therefore, its characteristic polynomial $\chi(\mathcal{A}_\partial,t)$ equals $(t-1)^{|E(G)|}$. By \autoref{thm:Fubini-type theorem}, we conclude that
\[
(t-1)^{|E(G)|}=\sum_{X\in L(\mathcal{A}_{\Pi(G)})}\varphi_G(X,t)\,\chi(\mathcal{A}_{\Pi(G)}/X,t).
\]

When $\mathbb{F}$ is a finite field of $q$ elements, the valuations $\nu _0$, $\nu_1$, and $\nu$ are understood as
counting measures, and the variable $t$ is replaced by the number $q$. 
\end{proof}

To verify part (b) of \autoref{Dec1}, we now consider another hyperplane arrangement $\mathcal{A}^{\rm cyc}_\partial$ of $G(\partial)$ consisting of the hyperplanes $G_e(\partial)$ indexed by cycle edges, i.e.,
\[
\mathcal{A}^{\rm cyc}_\partial:=\big\{G_e(\partial):e\in E_{\rm cyc}(G)\big\}.
\]
Similarly, the complement $M(\mathcal{A}^{\rm cyc}_\partial)=G(\partial)\smallsetminus\bigcup_{e\in E_{\rm cyc}(G)}G_e(\partial)$ of $\mathcal{A}^{\rm cyc}_\partial$ is also an element of $\mathcal{B}(V_1\times V_0)$, and hence its indicator function $f:=1_{M(\mathcal{A}^{\rm cyc}_\partial)}$ is a $\mathbb{Z}[t]$-valued simple function on $V_1\times V_0$. The boundary space also decomposes as $B(G,\mathbb{F})=\bigsqcup_{{\tilde X}\in L(\mathcal{A}_{{\tilde\Pi}(G)})}M(\mathcal{A}_{{\tilde\Pi(G)}}/{\tilde X})$. Thus, we replace $\mathcal{A}_\partial$ and $\mathcal{A}_{\Pi(G)}$ in the proof of part (a) of \autoref{Dec1} with $\mathcal{A}^{\rm cyc}_\partial$ and $\mathcal{A}_{{\tilde\Pi(G)}}$, respectively. Analogous to part (a) of \autoref {Dec1}, we can derive the following relation:
\[
t^{|E_{\rm cut}(G)|}(t-1)^{|E_{\rm cyc}(G)|}=\int 1_{M(\mathcal{A}^{\rm cyc}_\partial)}d\nu=\int\!\int f_{\bm b}\,d\nu_1d\nu_0=\sum_{\tilde{X}\in L(\mathcal{A}_{\tilde{\Pi}(G)})}\tilde{\varphi}_G(\tilde{X},t)\,\chi(\mathcal{A}_{\tilde{\Pi}(G)}/\tilde{X},t).
\]
Likewise, when $\mathbb{F}$ is a finite field of $q$ elements, the valuations $\nu _0$, $\nu_1$, and $\nu$ are understood as
counting measures, and the variable $t$ is replaced by the number $q$. By summarizing the above arguments, we conclude that part (b) of \autoref{Dec1} holds.

\section*{Acknowledgements}
This work is supported by the Guangdong Basic and Applied Basic Research Foundation (Grant No. 2026A1515012237, Grant No. 2026A1515012543). 


\begin{thebibliography}{99}\setlength{\itemsep}{-.mm}
\bibitem{AHE2018}
K. Adiprasito, J. Huh, E. Katz, Hodge theory for combinatorial geometries, Ann. of Math. (2) 188 (2018), 381--452.

\bibitem{Birkhoff1912}
G. D. Birkhoff, A determinant formula for the number of ways of coloring a map, Ann. Math. 14 (1912), 42--46.

\bibitem{Bjorner1997}
A. Bj\"{o}rner, T. Ekdahl, Subspace arrangements over finite fields: cohomological and enumerative aspects, Adv. Math. 129 (1997), 159--187.

\bibitem{Bondy1976}
J. A. Bondy, U. S. R. Murty, Graph Theory with Applications, Macmillan, London, 1976.

\bibitem{Chen2000}
B. Chen, On characteristic polynomials of subspaces arrangements, J. Combin. Theory, Ser. A 90 (2000), 347--352.

\bibitem{Ehrenborg-Readdy1998}
R. Ehrenborg, M. A. Readdy, On valuations, the characteristic polynomial, and complex subspace arrangements. Adv. Math. 134 (1998), 32--42.

\bibitem{Forge-Zaslavsky2016}
D. Forge, T. Zaslavsky, Lattice points in orthotopes and a huge polynomial Tutte invariant of weighted gain graphs, J. Combin. Theory Ser. B 118 (2016), 186--227.

\bibitem{FRW2025}
H. Fu, X. Ren, S. Wang, Counting flows of b-compatible graphs, Adv. in Appl. Math. 168 (2025),  Paper No. 102901.

\bibitem{Heron1972}
A. P. Heron, Matroid polynomials, In: Combinatorics, Institute of Math. and its Applications, Southend-on-Sea, 1972, pp. 164--202.

\bibitem{Hoggar1974}
S. G. Hoggar, Chromatic polynomials and logarithmic concavity, J. Combin. Theory Ser. B 16 (1974), 248--254.

\bibitem{Huh2012}
J. Huh, Milnor numbers of projective hypersurfaces and the chromatic polynomial of graphs, J. Amer. Math. Soc. 25 (2012), 907--927.

\bibitem{Huh-Katz2012}
J. Huh, E. Katz, Log-concavity of characteristic polynomials and the Bergman fan of matroids, Math. Ann. 354 (2012), 1103--1116.

\bibitem{Jaeger1988}
F. Jaeger, Nowhere-zero flow problems, in: L. W. Beineke, R. J. Wilson (Eds.), Selected Topics in Graph Theory, Vol. 3, Academic Press, New York, 1988, pp. 71--95.

\bibitem{Jaeger1992}
F. Jaeger, N. Linial, C. Payan, M. Tarsi, Group connectivity of graphs--a nonhomongenous analogue of nowhere-zero flow properties,  J. Combin. Theory Ser. B 56 (1992), 165--182.

\bibitem{Kochol2002}
M. Kochol, Tension polynomials of graphs, J. Graph Theory 40 (2002), 137--146.

\bibitem{Kochol2022}
M. Kochol, Polynomials counting nowhere-zero chains in graphs, Electron. J. Combin. 29 (2022) P1.19.

\bibitem{Kochol2024}
M. Kochol. Polynomials counting nowhere-zero chains associated with homomorphisms. Mathematics 12 (2024), 3218.

\bibitem{Lai2006}
H.-J. Lai, X. Yao, Group connectivity of graphs with diameter at most 2, European J. Combin. 27 (2006), 436--447.

\bibitem{Orlik-Terao1992}
P. Orlik, H. Terao, Arrangements of Hyperplanes, Springer-Verlag, Berlin, 1992.

\bibitem{Read1968}
R. C. Read, An introduction to chromatic polynomials, J. Combin. Theory 4 (1968), 52--71.

\bibitem{Rota1971}
G.-C. Rota, Combinatorial theory, old and new, In: Actes du Congr\`{e}s International des Math\'{e}maticiens. Tome 3 (Nice, 1970), Gauthier-Villars, Paris, 1971, pp. 229--233.

\bibitem{Seymour1995}
P. D. Seymour, Nowhere-zero flows, Appendix to Chapter 4, in: R.L. Graham, M. Grötschel, L. Lovász (Eds.), Handbook of Combinatorics, vol. 1, North-Holland (Elsevier), Amsterdam, 1995, pp. 289--299.

\bibitem{Stanley2007}
R. P. Stanley, An introduction to hyperplane arrangements, In: E. Miller, V. Reiner, B. Sturmfels (Eds.), Geometric Combinatorics, IAS/Park City Math. Ser. vol. 13, Amer. Math. Soc. Providence, RI, 2007, pp. 389--496.

\bibitem{Tutte1949}
W. T. Tutte, On the imbedding of linear graphs in surfaces, Proc. London Math. Soc. (2) 51 (1949), 474--483.
 
\bibitem{Tutte1954}
W. T. Tutte, A contribution to the theory of chromatic polynomials, Canad. J. Math. 6 (1954) 89--91.

\bibitem{Tutte1967}
W. T. Tutte, On dischromatic polynomials, J. Combin. Theory 2 (1967), 301--320.

\bibitem{Welsh1976}
D. J. A. Welsh, Matroid Theory, Academic Press, London (1976). Reprinted (2010), Dover, Mineola.

\bibitem{Whitney1932}
H. Whitney, A logical expansion in mathematics, Bull. Amer. Math. Soc. 38 (1932), 572--579.

\bibitem{Whitney1932-1}
H. Whitney, The coloring of graphs, Ann. Math. 33(2) (1932), 688--718.
\end{thebibliography}
\end{document}